\pdfoutput=1 		
\documentclass[leqno]{article}
\usepackage[normal]{phaine_style}
\usepackage[margin=1.5in]{geometry}
\usepackage{xparse}
\include{standard_macros}

\renewcommand{\lex}{\mathrm{lex}}
\newcommand{\acc}{\mathrm{acc}}
\newcommand{\Funlexacc}{\Fun^{\lex,\acc}}

\newcommand{\qTopGrp}{\mathrm{q}\TopGrp}

\renewcommand{\Fun}{\mathrm{Fun}}
\renewcommand{\cts}{\mathrm{cts}}

\renewcommand{\Functs}{\Fun^{\cts}}

\newcommand{\Ani}{\categ{Ani}}
\renewcommand{\Spc}{\Ani}
\DeclareMathOperator{\Cond}{Cond}
\newcommand{\ICond}{\categ{Cond}}
\newcommand{\FunctsInt}{\mathbf{Fun}^{\cts}}
\newcommand{\CondAni}{\Cond(\Ani)}
\newcommand{\CondSet}{\Cond(\Set)}

\newcommand{\CondCat}{\Cond(\Catinfty)}

\newcommand{\CondGrp}{\Cond(\Grp)}

\newcommand{\Comp}{\categ{Comp}}

\DeclareMathOperator{\FinSet}{\Set_{\fin}}

\DeclareMathOperator{\ProFin}{\Pro(\FinSet)}

\newcommand{\Extr}{\categ{Extr}}
\newcommand{\Extrop}{\Extr^{\op}}
\newcommand {\CW}{
	\categ{CW}}
\newcommand {\CG}{
	\categ{CG}}

\DeclareMathOperator{\wc}{wc}
\newcommand{\Affwc}{\Aff{}^{\kern0.15em\wc}}
\newcommand{\Affwcop}{\Aff{}^{\kern0.15em\wc,\op}}

\newcommand{\aff}{\ensuremath{\textup{aff}}}

\newcommand{\RTop}{\categ{RTop}}

\newcommand{\cond}{\mathrm{cond}}

\newcommand{\TopGrp}{\Top\Grp}

\newcommand{\picond}{\uppi^{\cond}}

\newcommand{\pizerocond}{\picond_0}

\renewcommand{\Setfin}{\Set_{\fin}}

\newcommand{\ProSetfin}{\Pro(\Setfin)}

\newcommand{\ProAni}{\Pro(\Ani)}

\renewcommand{\Spec}{\mathrm{Spec}}

\NewDocumentCommand{\multgrp}{o}{\mathbb{G}\IfValueTF{#1}{_{\mup, #1}}{_{\mup}}}

\newcommand{\Zaraff}[1]{\mathrm{Zar}{}_{#1}^{\kern0.1em\aff}}

\newcommand{\ProEt}[1]{\mathrm{Pro\acute{E}t}_{#1}}

\newcommand{\ProEtaff}[1]{\mathrm{Pro\acute{E}t}{}_{#1}^{\kern0.1em\aff}}

\newcommand{\ProZaraff}[1]{\mathrm{ProZar}{}_{#1}^{\kern0.1em\aff}}

\usepackage{interval}

\title{\Large Shape theory for condensed anima}

\author{Catrin Mair}

\date{\today}

\begin{document}

\maketitle


\begin{abstract} 
We give different perspectives on the notion of shape for condensed anima. We prove that it recovers more classical notions of shape for topological spaces in the cases of all paracompact compactly generated spaces and all locally contractible spaces. These recovering statements imply and extend comparison results on sheaf and condensed cohomology by Clausen-Scholze and Haine \cites{Scholze:condensednotes}{arXiv:2210.00186}. Another homotopy-theoretical direction for condensed anima are their condensed homotopy groups. Connected to this, we give a description of the underlying topological group functor on condensed groups via quasi-topological groups.   
\end{abstract}

\tableofcontents



\section{Introduction: Overview and main results}


In Algebraic Topology, one studies topological spaces by means of algebraic invariants like homotopy and cohomology.
The theory of condensed mathematics suggests that topological spaces should be replaced by condensed sets or anima.
It seems natural to ask how one can translate classical concepts from algebraic topology developed for topological spaces into this setting.  
\subsection*{An overview}
\textit{Condensed sets} are defined as sheaves on the category of totally (or extremally) disconnected compact Hausdorff spaces.
The passage from topological spaces to condensed sets and vice versa is encoded in an adjoint pair of functors 
\[
 \begin{tikzcd}[column sep = huge]
            \Top \arrow[r, shift left=1ex] & \CondSet \period \arrow[l, shift left=.5ex]
\end{tikzcd}
\]
The higher categorical version of $\CondSet$ is the $\infty$-category $\CondAni$ of \textit{condensed anima}.
It combines two flavors of the notion of 'spaces' appearing widely in the literature:
On the one hand, there is the topological direction given by a fully faithful embedding from the category $\CondSet$ of condensed sets constituting the $0$-truncated condensed anima. 
By this, we can regard a topological space, identified with a corresponding condensed set, as an object in $\CondAni$. 
On the other hand, the fully faithful discrete sheaf functor identifies \textit{homotopy types}, classically referred to as \textit{spaces} or \textit{$\infty$-groupoids}, and modeled as objects in the $\infty$-category $\Ani$ of \textit{anima}, with those condensed anima carrying no topological information. 
\begin{equation}\label{Diagram_1}
\begin{tikzcd}[row sep=small]
\CondSet \arrow[hookrightarrow,dr,swap, "\text{topological direction}"] 
 & & \Ani \arrow[hook',dl,  "\text{homotopical direction}"] \\
 &\CondAni& 
\end{tikzcd}
\end{equation}
According to their characteristics, condensed anima belonging to the topological direction are called \textit{static}, while those of the homotopical direction are called \textit{discrete}.
Extending homotopy-theoretic notions defined for topological spaces or anima, respectively, along the two directions allows us to study functorially homotopy-theoretic properties of condensed anima:
\begin{equation}\label{Diagram_2}
\begin{tikzcd}
&\CondAni \arrow[rightarrow, swap, dl, "\text{condensed homotopy groups}"]\arrow[rightarrow, dr, "\text{shape}"]& \\
\CondGrp \arrow[d,swap, "\text{underlying topological group}"]& & \ProAni \\
\TopGrp & &
\end{tikzcd}
\end{equation}
\subsection*{Main results}
In this paper, we specify the functors in Diagram (\ref{Diagram_2}). Note that the \textit{shape functor} on the right already has been studied by the author in their master's thesis \cite{arXiv:2105.07888}.
We extend the results stated there under the use of \textit{shape theory}. We start with giving several perspectives on this functor, all of which are summarized in the following theorem.
\begin{theorem}
The shape functor on condensed anima $$\Shape^c\colon \CondAni \rightarrow \ProAni$$ is uniquely determined as the extension of the restricted shape functor $$\Shape \colon \Extr\subset \Top \xrightarrow{\Sh(\cdot)} \RTop_{\infty}\xrightarrow{\Shape} \ProAni$$
on topological spaces by sifted colimits along the inclusion $\Extr \subset \CondAni$ of extremally disconnected profinite sets.
Equivalently, it can be characterized as 
\begin{itemize}
    \item[(i)] the pro-left adjoint of the discrete functor
    $(\cdot)^{\mathrm{disc}}\colon \Ani \hookrightarrow \CondAni$ (\cref{shape_prohomotopy_agree}),
    \item[(ii)] the composite $\CondAni \xrightarrow{\CondAni_{/\cdot}} \RTop_{\infty}\xlongrightarrow{\quad \Shape \quad} \ProAni$ (\cref{relativeshape}),
    \item[(iii)] the composite $\CondAni \xrightarrow{\Sh_{\et}(\cdot) } \RTop_{\infty}\xlongrightarrow{\quad \Shape \quad} \ProAni$ (\cref{cor:ettopos_cshape}).
\end{itemize}
Here the \'etale $\infty$-topos $\Sh_{\et}(X)\subset \CondSet_{/X}$ of a condensed anima $X$ is the full sub-$\infty$-topos of discrete condensed anima, i.e., objects in $\Ani \subset \CondAni$, lying above $X$.
\end{theorem}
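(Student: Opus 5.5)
The strategy is to take characterization (i) as the definition and derive everything else from it. Since $\CondAni$ is an $\infty$-topos, hypercomplete sheaves on $\Extr$, the constant-sheaf functor $(\cdot)^{\mathrm{disc}}\colon \Ani \to \CondAni$ is its unique geometric morphism to $\Ani$. It preserves limits. Because $\CondAni$ is accessible (on a large-cardinal-free model it is a large colimit of accessible topoi, but we may work with $\kappa$-condensed anima), this functor has a pro-left adjoint $X \mapsto \{\, A \mapsto \mathrm{Map}(X, A^{\mathrm{disc}})\,\}$, and this is exactly $\Shape^c$. In particular $\Shape^c(X)$ is the pro-anima corepresenting $A \mapsto \Gamma(X; A^{\mathrm{disc}})$.

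For the sifted-colimit description, I would check two things. First, $\Shape^c$ preserves all colimits into $\ProAni$, being a left adjoint on the level of corepresented functors. Second, every condensed anima is a sifted colimit of objects of $\Extr$: $\CondAni \simeq \mathcal{P}_\Sigma(\Extr)$ because representables from $\Extr$ are compact projective and sheaves on $\Extr$ are exactly the finite-product-preserving presheaves. Hence $\Shape^c$ is the unique sifted-colimit-preserving extension of its restriction to $\Extr$.

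It remains to identify this restriction with the topological shape. For $S \in \Extr$, the value of $\Shape^c(S)$ on $A$ is $A^{\mathrm{disc}}(S)$, the value at $S$ of the sheafification of the constant presheaf. Writing $S = \lim S_i$ as a cofiltered limit of finite sets, this becomes $\mathrm{colim}_i\, A^{S_i}$, using the fact that locally constant maps out of a profinite set factor through a finite quotient. On the other side, $\Sh(S)$ for a profinite set is Postnikov-complete and of homotopy dimension $0$ (a Stone space), and its shape is the pro-finite set $\{S_i\}$. This gives the identification $\Shape^c(S)\simeq \Shape(\Sh(S))\simeq \{S_i\}_i$.

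For (ii), the slice $\CondAni_{/X}$ is an $\infty$-topos whose terminal geometric morphism factors as $\CondAni_{/X}\to \CondAni\to \Ani$. Its pullback functor is $A\mapsto (A^{\mathrm{disc}}\times X\to X)$, and maps from the terminal object of the slice give $\mathrm{Map}_{/X}(X, A^{\mathrm{disc}}\times X)\simeq \mathrm{Map}(X,A^{\mathrm{disc}})$. The corepresented functors therefore agree, so $\Shape(\CondAni_{/X})\simeq\Shape^c(X)$.

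For (iii), I would show that $\Sh_{\et}(X)\subset \CondAni_{/X}$, the full subcategory of morphisms that are base changes of maps of discrete objects (locally), is a sub-$\infty$-topos containing the objects $A^{\mathrm{disc}}\times X$. It is closed under colimits and finite limits, and descent must be checked. Since $X^*A$ already lies in it and the terminal object agrees, the same corepresentability computation as in (ii) yields the same shape. The main obstacle is this step: making precise that $\Sh_{\et}(X)$ really is an $\infty$-topos with the inclusion a geometric morphism, i.e. that \'etale maps over $X$ are stable under colimits in $\CondAni_{/X}$ and satisfy descent. I would first try to reduce this, via the sifted-colimit presentation of $X$ and the equivalence $\CondAni_{/\mathrm{colim} X_j}\simeq \lim \CondAni_{/X_j}$, to the case $X=S\in\Extr$. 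There I would identify $\Sh_{\et}(S)$ with $\Sh(S)$ using that $S$ is totally disconnected.
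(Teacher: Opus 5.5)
Your treatment of the sifted-colimit description, (i) and (ii) is essentially the paper's argument run in the opposite direction. The paper defines $\Shape^c$ as the sifted-colimit extension and checks adjointness to $R_{\Ani}$ on generators via $\mathrm{Map}(K,A^{\mathrm{disc}})\simeq\colim_s A^{K_s}\simeq\mathrm{Map}_{\ProAni}(K,A)$. You instead start from the pro-left adjoint and check agreement on $\Extr$ with the same computation, and your slice computation in (ii) is the paper's. One slip: $(\cdot)^{\mathrm{disc}}$ does \emph{not} preserve all limits. It fails on infinite products, as the paper notes; if it did preserve them, $\Shape^c$ would land in $\Ani$. Left exactness and accessibility are all the pro-left adjoint construction needs, so this does not affect your argument.

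The genuine gap is in (iii). Your subcategory, objects of $\CondAni_{/X}$ that are locally base changes of maps of discrete objects, consists of locally constant objects. It is not closed under colimits, and for $S\in\Extr$ it is not $\Sh(S)$. So the obstacle you flag cannot be overcome for that definition, and total disconnectedness does not help. For instance, on $S=\beta\mathbf{N}$ each clopen point $\{n\}\hookrightarrow S$ is the base change of $\{1\}\to\{0,1\}$ along a locally constant map. Their coproduct $\mathbf{N}\hookrightarrow S$ in $\CondAni_{/S}$ has empty fibre at every $p\in S\setminus\mathbf{N}$ but nonempty fibres at points of $\mathbf{N}$ arbitrarily close to $p$, so it is not locally constant. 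Yet it is $c_S^*$ of the representable sheaf of the open set $\mathbf{N}\subset S$.

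What is missing is the right definition together with Haine's theorem. The paper sets $\Sh_{\et}(X)=\lim_{K\to X}\Sh(K)$, equivalently the objects over $X$ whose pullback to every $K\in\Extr$ lies in the essential image of $c_K^*$. It then uses that $c_K^*\colon\Sh(K)\to\CondAni_{/K}$ is fully faithful for $K\in\Extr$ \cite[Corollary 4.7]{arXiv:2210.00186}. With these, $\Sh_{\et}(X)$ is automatically an $\infty$-topos, since limits of $\infty$-topoi along algebraic morphisms are computed in $\Catinfty$. The limit of the $c_K^*$ is then a fully faithful algebraic morphism into $\lim_{K\to X}\CondAni_{/K}\simeq\CondAni_{/X}$. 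Your corepresentability computation, which is exactly \cref{fullyshapeequiv}, then gives $\Shape(\Sh_{\et}(X))\simeq\Shape^c(X)$.

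With this definition the shape identity even follows formally. $\Sh_{\et}$ is the colimit-preserving extension of $K\mapsto\Sh(K)$ into $\RTop_{\infty}$, and $\Shape$ preserves colimits. Haine's theorem is needed only to realize $\Sh_{\et}(X)$ as a full sub-$\infty$-topos of $\CondAni_{/X}$, as the statement asserts.
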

We analyze the extent to which the shape functor on condensed anima recovers homotopy-theoretic notions of a topological space such as its homotopy type, shape or cohomology. Our results extend work by Clausen-Scholze \cite[Theorem 3.2]{Scholze:condensednotes} and Haine \cite[Corollary 4.12]{arXiv:2210.00186} on the comparison of sheaf and condensed cohomology for (locally) compact Hausdorff spaces. We do not consider as general coefficients as Haine does, but expand the class of topological spaces as indicated in the following theorem.
\begin{theorem}
 Let $T\in \Top$ be a topological space and $\underline{T}\in \CondAni$ its associated condensed anima via the composed functor
 $$\Top \xrightarrow{T\mapsto \Hom_{\Top}(\cdot,T)} \CondSet \hookrightarrow \CondAni\period $$ 
 \begin{itemize}
     \item[1.] Then the shape $\Shape^c(\underline{T})$ recovers 
     \begin{itemize}
         \item[a)] the shape $\Shape(T)\in \ProAni$ of the $\infty$-topos $\Sh(T)$ of sheaves of anima over $T$ if the space $T$ is a paracompact compactly generated (\cref{paracomshape}).
         \item[b)] the fundamental $\infty$-groupoid $\Sing(T)\in \Ani\subset \ProAni$ if $T$ is a locally contractible space (\cref{Locally_contractible_cshape}).
\end{itemize}
     \item[2.] In both cases a) and b), we obtain for all $A\in \Ab\Grp$ natural isomorphisms of sheaf and condensed cohomology (\cref{prop:coho_cond_sheaf})
$$H^n_{\mathrm{sheaf}}(T,A) \xrightarrow{\simeq} H^n_{\mathrm{cond}}(\underline{T},A)\in \Grp\comma$$
and further deduce natural isomorphisms with \v{C}ech and singular cohomology (\cref{cor:coho_cech_cond}):
\begin{itemize}
    \item[c)] For a paracompact compactly generated and Hausdorff space $T$, we have
    $$H^n_{\text{\v{C}}\mathrm{ech}}(T,A)  \simeq H^n_{\mathrm{cond}}(\underline{T},A)\comma $$ 
    \item[d)] For a locally contractible space $T$, we have $$H^n_{\mathrm{sing}}(T,A)\simeq H^n_{\mathrm{cond}}(\underline{T},A)\period $$
\end{itemize}
\end{itemize}
\end{theorem}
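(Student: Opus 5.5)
The plan is to compare the two sides through the functors they corepresent on $\Ani$, and not through explicit pro-systems. By characterization (i) of the previous theorem (\cref{shape_prohomotopy_agree}), $\Shape^c(\underline{T})$ is the pro-anima corepresenting
\[
A\mapsto \Map_{\CondAni}(\underline{T},A^{\mathrm{disc}}).
\]
Likewise, $\Shape(T)$ corepresents $A\mapsto \Gamma(\Sh(T),A_T)$, the global sections of the constant sheaf. Since a pro-anima is determined by the accessible left exact functor $\Ani\to\Ani$ it corepresents, part 1 reduces to producing a natural equivalence
\[
\Gamma(\Sh(T),A_T)\simeq \Map_{\CondAni}(\underline{T},A^{\mathrm{disc}})
\]
for all $A\in\Ani$.

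\textbf{Part 1a.} I would first treat a compact Hausdorff $K$. Choose a hypercover $S_\bullet\to K$ by extremally disconnected sets. On the condensed side, hyperdescent in $\CondAni$ gives $\Map(\underline K,A^{\mathrm{disc}})\simeq\lim_{\Delta}\Map(S_n,A^{\mathrm{disc}})$. On the sheaf side, proper (hyper)descent for compact Hausdorff spaces gives the same limit. Here one uses that for a profinite $S$ both sides are $\colim_i A^{S_i}$ over its finite quotients; this is the Clausen--Scholze/Haine input.

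For general paracompact compactly generated $T$, write $\underline T\simeq\colim_{K\subset T}\underline K$ over compact subsets. This colimit is filtered, with monomorphic transition maps, because $T$ is compactly generated. It is already a colimit in $\CondSet$ and hence also in $\CondAni$, since filtered colimits of sets are preserved there. The condensed side is therefore $\lim_K\Gamma(K,A_K)$.

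The main obstacle is the matching statement on the sheaf side, $\Gamma(T,A_T)\simeq\lim_K\Gamma(K,A_K)$. I would try to get it from paracompactness in the form of Lurie's result that, for a closed subset $K$ of a paracompact space, sections over $K$ are the colimit of sections over open neighbourhoods. Then I would pass to a locally finite closed cover by compacta, or use compactly generatedness to see that the closed cover by all compacta satisfies descent for constant sheaves.

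\textbf{Part 1b.} For locally contractible $T$, Lurie (HA, Appendix A) gives $\Shape(T)\simeq\Sing(T)$. On the condensed side, $\underline{(\cdot)}$ sends open covers to effective epimorphisms. Indeed, a map from a profinite $S$ into $T$ can be refined along a finite clopen partition subordinate to the pulled-back cover.

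Using a basis of contractible opens and Dugger--Isaksen, I would build a hypercover $U_\bullet\to T$ by disjoint unions of contractible opens. It then suffices to show $\Shape^c(\underline U)\simeq *$ for contractible $U$. This follows from homotopy invariance: $\Shape^c(\underline{[0,1]})\simeq *$ by 1a, $\Shape^c$ preserves finite products with profinite sets, and a contraction $[0,1]\times U\to U$ then shows that the identity of $\Shape^c(\underline U)$ factors through a point. Taking colimits over $U_\bullet$ on both sides recovers $\Sing(T)\simeq|\Sing(U_\bullet)|$.

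\textbf{Part 2.} Apply the equivalence from part 1 with $A=K(A,n)$ and take $\pi_0$. This gives
\[
H^n_{\mathrm{cond}}(\underline T,A)=\pi_0\Map(\Shape^c(\underline T),K(A,n)),
\]
which agrees with $\pi_0\Gamma(\Sh(T),K(A,n)_T)$.

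One must check that this last group equals sheaf cohomology. Since $K(A,n)$ is truncated, its sections are computed by derived global sections of the abelian sheaf $A$ on $T$, and this gives $H^n_{\mathrm{sheaf}}(T,A)$. For c), Godement's theorem identifies \v{C}ech and sheaf cohomology on paracompact Hausdorff spaces. For d), 1b gives $\pi_0\Map(\Sing T,K(A,n))=H^n_{\mathrm{sing}}(T,A)$.
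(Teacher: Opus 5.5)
\textbf{The gap is in Part 1a}, at the step ``proper (hyper)descent for compact Hausdorff spaces gives the same limit''.

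Your condensed-side totalization is correct. Note what it computes, though. Truncations and limits in $\CondAni$ are formed pointwise on $\Extr$, so $A^{\mathrm{disc}}\simeq\lim_m(\tau_{\le m}A)^{\mathrm{disc}}$. Hence the totalization equals $\lim_m\Gamma(K,(\tau_{\le m}A)_K)$, i.e.\ global sections over the Postnikov completion of $\Sh(K)$. Proper hyperdescent does hold for truncated coefficients; this is Haine's input, cf.\ \cref{prohoshape}. But for infinite-dimensional $K$ the constant sheaf $A_K$ need not be the limit of its truncations, and then your step fails.

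In fact 1a cannot hold for all $A\in\Ani$ in this generality. Let $X=\{\infty\}\cup\bigsqcup_{k\ge1}S^k$ be the one-point compactification, which is compact metrizable and hence paracompact and compactly generated. Take $A=S^2$.
\begin{itemize}
\item Sheaf side: by Lurie's formula for paracompact spaces, $\pi_0\Gamma(X,A_X)=[X,S^2]$. The point $\infty$ has the clopen neighbourhood basis $\{\infty\}\cup\bigcup_{k>N}S^k$, and $S^2$ is locally contractible. So every map is homotopic to one that is constant near $\infty$, and $[X,S^2]\cong\bigoplus_k\pi_k(S^2)$, which is countable.
\item Condensed side: the valid truncated comparison gives $\pi_0\mathrm{Map}(\underline X,(\tau_{\le m}S^2)^{\mathrm{disc}})\cong[X,\tau_{\le m}S^2]\cong\prod_{k\le m}\pi_k(S^2)$, compatibly in $m$. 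The Milnor sequence then gives a surjection $\pi_0\mathrm{Map}(\underline X,(S^2)^{\mathrm{disc}})\to\prod_k\pi_k(S^2)$. This set is uncountable, since $\pi_k(S^2)\neq0$ for infinitely many $k$ (Serre).
\end{itemize}
Thus $\Shape(X)\not\simeq\Shape^c(X)$. What your argument actually establishes is agreement after pro-truncation.

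The paper's proof of \cref{paracomshape} has the same blind spot. It combines Lurie's formula $\Gamma(T,A_T)\simeq\Sing\,\mathrm{Map}(T,|A|)$ with $T=\colim_i K_i$ in $\Top$. The latter is a $1$-categorical statement about sets of maps, but the paper reads it as $\Sing\,\mathrm{Map}(T,|A|)\simeq\lim_i\Sing\,\mathrm{Map}(K_i,|A|)$ in $\Ani$. That is exactly the descent statement that fails for $X$.

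\textbf{Further points.} Independently of the above, you do not carry out the passage from compacta to a general paracompact compactly generated $T$. The identity $\Gamma(T,A_T)\simeq\lim_K\Gamma(K,A_K)$ is the whole content of that step. A locally finite cover by compacta is only available when $T$ is locally compact, and without Hausdorffness the compact subsets need not even be Hausdorff. So even the truncated statement needs an argument you have not supplied outside the locally compact case.

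Part 2 only uses the truncated coefficients $K(A,n)$, so it is fine wherever the truncated comparison is available.

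Part 1b follows the paper's route: a hypercover by disjoint unions of contractible opens, hypercompleteness of $\CondAni$, homotopy invariance, and Dugger--Isaksen. However, drop the claim that $\Shape^c$ commutes with products by a profinite set $S$. For a countably infinite set $D$ one has $\Shape^c(D\times S)\simeq\coprod_D S$, which is not the product $\{D\times S_j\}_j$ in $\ProAni$; compare maps into $\{0,1\}$ when $S$ is the Cantor set. You only need $\Shape^c(K\times[0,1])\simeq\Shape^c(K)$ for $K\in\Extr$. This follows as in \cref{condensedhomotopyinv} from $\Shape(K\times[0,1])\simeq\Shape(K)$. For this space the Postnikov issue does not arise, because $\Sing\,\mathrm{Map}(K\times[0,1],|A|)\simeq\Sing\,\mathrm{Map}(K,|A|)$ is the limit of its truncations.

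Similarly, for $H^n_{\mathrm{sheaf}}$ in case b), the paper avoids relying on $\Shape(\Sh(T))\simeq\Sing(T)$. It goes through $\Shhyp(T)$ and \cref{protruncation_shapeequiv} instead, which suffices because $K(A,n)$ is truncated.
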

Restriction of the functor $\Top \rightarrow \CondSet$ to group objects admits a left adjoint. This is the \textit{underlying topological group functor} $\CondGrp \rightarrow \TopGrp$ on the left hand side in Diagram (\ref{Diagram_2}). We find a description of this functor via quasi-topological groups $ \mathrm{q}\TopGrp$, i.e., topologized groups such that the inversion map and all the translation maps are continuous.
We use that every quasi-topological group can be refined to a topological group via the left adjoint $\tau\colon \mathrm{q}\TopGrp\rightarrow \TopGrp$ of the fully faithful embedding $\TopGrp\hookrightarrow \mathrm{q}\TopGrp$.
\begin{proposition}[(\cref{prop:underlying_top})] Restriction of the left adjoint of the functor $\Top \rightarrow \CondSet$, namely the functor $$(\cdot)^{\Top}\colon\CondSet \rightarrow \Top, \quad X \mapsto X(*)^{\Top}$$ assigning to every condensed set an underlying topological space, to group objects lands in the category of quasi-topological groups, i.e., it restricts to a functor
$\CondGrp \rightarrow \qTopGrp.$
The underlying topological group functor $(\cdot)^{\TopGrp}\colon \CondGrp\rightarrow \TopGrp$, which is defined as the left adjoint of $\TopGrp\rightarrow \CondGrp$, is given as the composition 
\begin{align*}
(\cdot)^{\tau}\colon\CondGrp\rightarrow \mathrm{q}\TopGrp\xrightarrow{\tau} \TopGrp.
\end{align*}
\end{proposition}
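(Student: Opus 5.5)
Two things have to be shown: that $X(*)^{\Top}$ is a quasi-topological group for every condensed group $X$, and that composing with $\tau$ gives the left adjoint of $\TopGrp\rightarrow\CondGrp$. I first recall how $(\cdot)^{\Top}$ is built. For $X\in\CondSet$, the space $X(*)^{\Top}$ has underlying set $X(*)$ and carries the quotient topology of
$$\coprod_{S\in \Extr,\ s\in X(S)} S \longrightarrow X(*)\comma$$
that is, the finest topology making every map $S\to X(*)$ coming from an element of $X(S)$ continuous. This construction is natural in $X$: a map of condensed sets $f\colon X\to Y$ induces a continuous map $X(*)^{\Top}\to Y(*)^{\Top}$.

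\textbf{Quasi-topological structure.} Let $X$ be a condensed group. The group $X(*)$ has multiplication $m$, inversion $i$, and unit; inversion is a morphism of condensed sets $X\to X$. By naturality it induces a continuous map on $X(*)^{\Top}$, so inversion is continuous. For translations, fix $g\in X(*)$. Left translation by $g$ is the composite
$$X\cong *\times X\xrightarrow{g\times \mathrm{id}} X\times X\xrightarrow{m} X\comma$$
which is a morphism of condensed sets $X\to X$, so by naturality it is continuous on $X(*)^{\Top}$. Right translations are handled the same way. The full multiplication map need not be continuous, because $(X\times X)(*)^{\Top}$ need not coincide with the product topology. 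This is exactly why the functor only lands in $\qTopGrp$. Functoriality in homomorphisms of condensed groups is clear.

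\textbf{Adjunction.} Let $G\in\TopGrp$ with associated condensed group $\underline G$. I want natural bijections
$$\Hom_{\TopGrp}(\tau(X(*)^{\Top}),G)\cong\Hom_{\qTopGrp}(X(*)^{\Top},G)\cong\Hom_{\CondGrp}(X,\underline G)\period$$
The first bijection is the adjunction $\tau\dashv$ inclusion. For the second, the adjunction $(\cdot)^{\Top}\dashv\underline{(\cdot)}$ on sets gives a bijection $\Hom_{\Top}(X(*)^{\Top},G)\cong\Hom_{\CondSet}(X,\underline G)$, and it is given by the evaluation $\phi\mapsto\phi_*$ on $*$-points. It remains to check that under this bijection, group homomorphisms correspond to group homomorphisms. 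One direction is immediate: if $\phi$ is a map of condensed groups, then $\phi_*$ is a group homomorphism. For the converse, suppose the continuous map $f=\phi_*$ is a homomorphism. Then I must show $\phi_S\colon X(S)\to C(S,G)$ is a homomorphism for each $S\in\Extr$. Since $\phi_S(x)(s)=f(x|_s)$, where $x|_s$ denotes the pullback of $x$ along $s\colon *\to S$, and restriction $X(S)\to X(*)$ is a homomorphism, the map $\phi_S$ respects products pointwise. So $\phi$ is a homomorphism.

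\textbf{Expected obstacle.} The delicate point is the step where I identify morphisms $X\to\underline G$ with maps out of $X(*)^{\Top}$ and never compare $(X\times X)(*)^{\Top}$ with a product of spaces. The argument above avoids this by checking compatibility with multiplication only pointwise on $S$-points, and I will make sure that is genuinely sufficient. I also have to confirm that the existence of $\tau$ used here (the left adjoint to $\TopGrp\hookrightarrow\qTopGrp$) is the construction recalled earlier in the paper. Uniqueness of adjoints then yields $(\cdot)^{\TopGrp}\simeq\tau\circ(\cdot)^{\Top}$.
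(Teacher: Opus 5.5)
Your proposal is correct and follows essentially the same route as the paper. Continuity of inversion and translations comes from applying $(\cdot)^{\Top}$ to the corresponding endomorphisms of condensed sets. The adjunction bijection $\Hom_{\CondSet}(X,\underline{G})\cong\Hom_{\Top}(X(*)^{\Top},G)$ is then restricted to group homomorphisms via the pointwise check $\phi_S(xy)(s)=f(x|_s)f(y|_s)$ on $S$-points, and the result is composed with the adjunction of $\tau$ and the inclusion $\TopGrp\hookrightarrow\qTopGrp$.
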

\subsection*{Conventions}\label[subsection]{intro_subsec:notational_conventions}
\subsubsection*{Set-theoretic conventions}
Whenever working with condensed mathematics, there arise set-theoretic issues from the fact that defining sites are large. We follow the approach used in the setting of \textit{pyknotic mathematics} \cites{pyknoticI}{exodromy}  to deal with these. More on this can be found in \cref{ignorsetiss}. As an upshot: We will neglect set-theoretic issues in the sense that we will implicitly work with $\kappa$-small objects or functors for strongly inaccessible cardinals $\kappa$. We use results on condensed/pyknotic mathematics from \cites{pyknoticI}{Exodromy}{Scholze:condensednotes}{Scholze:analyticnotes} without any further notice on set-theory. 
\subsubsection*{On higher category theory}
We follow the conventions by Lurie in \cite{HTT}, \cite{Kerodon} and \cite{SAG} regarding aspects of higher category theory like $\infty$-categorical definitions and constructions.
The prefix "$\infty$" might be omitted if it is clear that all objects are considered in the $\infty$-categorical setting.
Moreover, we make the following implicit assumptions:
\begin{itemize}
\item Whenever working with a $1$-category in the $\infty$-setting, it will be implicitly identified with its corresponding $\infty$-category by the nerve construction.
\item If there exists a fully faithful embedding of $\infty$-categories 
$$\iota\colon \Ccal \hookrightarrow \mathcal{D}\comma $$ we will usually identify an object $X \in \Ccal$ with its image under $\iota$ and will refer to the corresponding object in $\mathcal{D}$ with $X$ as well.
\item For an $\infty$-category $\Ccal$, we denote the $\infty$-category of pro-objects on $\Ccal$, which is naturally identified with the opposite of the \category of left exact accessible functors $ \fromto{\Ccal}{\Ani} $, by
\begin{equation*}
        \Pro(\Ccal) \equivalent \Funlexacc(\Ccal,\Ani)^{\op} \comma
\end{equation*}
see \cite[\SAGthm{Definition}{A.8.1.1} \& \SAGthm{Proposition}{A.8.1.6}]{SAG}. We will implicitly identify an object in $\Ccal$ with its image under the 'Yoneda embedding' meaning the fully faithful functor $j_{\Ccal}\colon \Ccal\hookrightarrow\Pro(\Ccal)$. 
\end{itemize}
\subsubsection*{Notational conventions}
Throughout this paper, there appear several ($\infty$-)categories. 
We fix the following (standard) notation.
\begin{enumerate}
    \item We write $\Set$ for the category of sets and $\Set^{\mathrm{fin}}$ for the category of finite sets.
    \item We write $\Grp$, respectively,  $\Ab\Grp$ for the category of (abelian) groups. By $\TopGrp$ we denote the category of topological groups.
    \item  We write $\Top$ for the category of topological spaces, $\Comp$ for the category of compact Hausdorff spaces, $\ProFin$ for the category of totally disconnected compact Hausdorff spaces (=profinite sets) and $\Extr$ for the category of extremally disconnected profinite sets. By $\CW$ we denote the category of CW-spaces and by $\CG$ the category of compactly generated spaces using the definition recalled in \cref{def:compactlygenerated}. 
	\item We write $\Cat$, respectively, $ \Catinfty $ for the large ($\infty$-)category of small ($\infty$-)categories, and use $ \Ani \subset \Catinfty $ for the full subcategory spanned by the anima (\groupoids/ spaces).
    By $\Ani_{< \infty}\subset \Ani$ we denote the subcategory of truncated anima.
    \item We write $\Cond(\Ccal)$ for the ($\infty$-)category of condensed objects on some ($\infty$-)category $\Ccal$.
    \item We write $\Fun(\Ccal, \mathcal{D})$ for the $\infty$-category of functors between $\infty$-categories $\Ccal$ and $\mathcal{D}$ and $\Functs(\Ccal, \mathcal{D})$ for the $\infty$-category of functors between condensed $\infty$-categories $\Ccal$ and $\mathcal{D}$.
    \item We write $\RTop_{\infty}$ for the $\infty$-category of $\infty$-topoi with \textit{geometric morphisms}, i.e., a morphism $f\colon \mathcal{X} \rightarrow \mathcal{Y}$ between two $\infty$-topoi is given by a functor $f_*\colon \mathcal{X} \rightarrow \mathcal{Y} $  admitting a left exact left adjoint $f^* \colon \mathcal{X} \rightarrow \mathcal{Y}$. We refer to the left adjoint $f^*$ as an \textit{algebraic morphism}. The $\infty$-category of morphisms between two $\infty$-topoi $\mathcal{X}$ and $\mathcal{Y}$ in $\RTop_{\infty}$ is denoted by $\Fun_*(\mathcal{X},\mathcal{Y})$. It is the opposite category of the $\infty$-category of algebraic morphisms denoted by $\Fun^*(\mathcal{X},\mathcal{Y})$.
     \item For a site $\Ccal$ (with a Grothendieck topology $\tau$) we write $\Sh(\Ccal)$ for the $\infty$-category of sheaves of anima on $(\Ccal,\tau)$, $ \Shhyp(\Ccal) \subset \Sh(\Ccal)$ for the subcategory of hypercomplete objects (i.e., sheaves of anima satisfying descent with respect to hypercoverings), and $\Sh(\Ccal, \Set)$ for the category of sheaves of sets on $(\Ccal, \tau)$, i.e., the subcategory of $0$-truncated objects in $\Sh(\Ccal)$.

\end{enumerate}

\subsection*{Acknowledgments}\label[subsection]{intro_subsec:acknowledgments}
This paper emerged from the first chapter of my PhD thesis \cite{CatrinsThesis} and generalizes parts of my master's thesis \cite{arXiv:2105.07888}. I would like to once again thank my advisor, Torsten Wedhorn, for all his support, as well as the referees Clark Barwick and Timo Richarz. Special thanks go to Peter Scholze for answering questions on my master's thesis topic which laid the foundation for this paper.
I gratefully acknowledge support by Deutsche For\-schungs\-ge\-mein\-schaft (DFG, German Research Foundation) through the Collaborative Research Centre TRR 326 Geometry and Arithmetic of Uniformized Structures, project number 444845124, and Germany's Excellence Strategy EXC 2044/2 - 390685587, Mathematics Münster: Dynamics-Geometry-Structure.


\section{Preliminaries on shape theory and condensed anima}
We provide some background theory on the notion of shape for $\infty$-topoi and topological spaces as well as on the basics of condensed mathematics.
\subsection{Recollection on shape theory}\label[subsection]{sec:shape_theory}
As further references for shape theory consult \cite[Section 7.1.6]{HTT} and \cite[Section 4.2]{Exodromy}.
\begin{recollection}
The $\infty$-category $\Ani$ of \textit{anima} is the $\infty$-categorical version of $\Set$. For the notion of \textit{shape}, we need to work with its pro-version: The $\infty$-category $\ProAni$ of pro-anima is defined as the sub-$\infty$-category of finite limit preserving $\infty$-functors in $\Fun(\Ani, \Ani)^{\op}$. Its objects can be identified with cofiltered systems of anima \cite[Remark A.8.1.5]{SAG}.
\end{recollection}
\begin{construction}{\textbf{Pro-left adjoint}}\\ \label[construction]{proleftadjoint}For every $\infty$-topos $\mathcal{X}$, there exists a (up to a contractible choice) unique geometric morphism $g \colon \mathcal{X} \rightarrow \Ani$ consisting of a pair of adjoint functors
\[
 \begin{tikzcd}[column sep = huge]
            \mathcal{X} \arrow[r, shift left=1ex, "g_*"] & \Ani\comma \arrow[l, shift left=.5ex, "g^*"]
           \end{tikzcd}
   \] such that the pullback $g^*$ is a left exact left adjoint of the pushforward $g_*$. 
The pullback functor $g^*$ admits a pro-left adjoint $g_!\colon \mathcal{X} \rightarrow \Pro(\Ani)$
\begin{equation*}
\begin{tikzcd}[row sep=huge]
\mathcal{X} \arrow[rr, shift left=1ex, "g_*"] \arrow[dr, "g_!", dotted, swap]&
& \Ani\comma  \arrow[ll, shift left=.5ex, "g^*"]\arrow[dl, hook, "j_{\Ani}"]
\\ 
& \Pro(\Ani) &  
\end{tikzcd}
\end{equation*}
where $j_{\Ani}$ denotes the Yoneda embedding.
The functor $g_!$ is the left adjoint of the limit preserving pro-extension of $g^*$ sending a cofiltered system of anima to the limit of its evaluation under $g^*$
$$\Pro(g^*)\colon\Pro(\Ani)\rightarrow \mathcal{X}, \quad \{A_i\}_{i \in I}\mapsto \lim_{i \in I} g^*(A_i)\period $$
The image of an object $X\in\mathcal{X}$ under $g_!$ is the finite limit preserving functor
$$\Ani \rightarrow \Ani, \quad A \mapsto \Hom_{\mathcal{X}}(X, g^*(A))\period $$
\end{construction}
\begin{definition}{\textbf{Shape of an $\infty$-topos}}\\
The \textit{shape} of an $\infty$-topos $\mathcal{X}$ is defined as the image 
$$\Pi_{\infty}(\mathcal{X})\colonequals g_!(\mathbf{1}_\mathcal{X})\in \Pro(\Ani)$$
of the terminal object $\mathbf{1}_{\mathcal{X}}$ in $\mathcal{X}$ under the pro-left adjoint $g_!\colon \mathcal{X} \rightarrow \Pro(\Ani)$.
\end{definition}
\begin{remark}
The unit map $\mathrm{id_{\mathcal{X}}} \rightarrow h_*h^*$ of a geometric morphism $h\colon \mathcal{Y} \rightarrow \mathcal{X}$ 
of $\infty$-topoi
induces a map of shapes $\Pi_{\infty}(\mathcal{X}) \rightarrow \Pi_{\infty}(\mathcal{Y})\in \ProAni$.
By this, 
we obtain a functor $$\Pi_ {\infty}\colon\RTop_{\infty}\rightarrow \Pro(\Ani)$$ on the $\infty$-category $\RTop_{\infty}$ of $\infty$-topoi (with geometric morphisms) assigning the shape.
The assignment is colimit preserving and $\Shape$ admits a right adjoint~\cite[Remark 7.1.6.15]{HTT}.  
\end{remark}
\begin{remark}{\cite[Remark 6.1.4]{Exodromy}}\label[remark]{fullyshapeequiv}
Every geometric morphism $h\colon \mathcal{Y} \rightarrow \mathcal{X}$ with a fully faithful pullback morphism $h^*\colon \mathcal{X} \hookrightarrow \mathcal{Y}$ is a \textit{shape equivalence}, i.e., it induces an equivalence of pro-anima $\Shape(\mathcal{X})\simeq \Shape(\mathcal{Y})$.
\end{remark}
\begin{definition}{\textbf{Shape of a topological space}}\\
Let $X$ be a topological space. We define its \textit{shape} as the shape of the $\infty$-topos $\Sh(X)\colonequals \Sh(\mathrm{Ouv(X)})$ of sheaves of anima (on the site of open subsets) over $X$
$$\Shape(X)\colonequals \Shape(\Sh(X))\period $$
\end{definition}
\begin{example}\label[example]{Shape_CW}
If $X$ is a CW-complex, the shape agrees with the fundamental $\infty$-groupoid $\Sing(T)$ of $X$ \cite[Example 2.4]{MR3763287}
$$\Pi_{\infty}(X)= \Sing(T) \in \Ani\period $$
For an arbitrary topological space, there is always a map 
$$\Sing(T) \rightarrow \Pi_{\infty}(X) \in \ProAni\period $$ 
\end{example}
It is not true that $\Shape(T)$ recovers the classical homotopy type $\Sing(T)$ for all locally contractible spaces if we work with sheaves of anima over $X$.
It becomes true if one takes hypersheaves instead, i.e., works with the hypercompletion $\Shhyp(X)$ of the $\infty$-topos $\Sh(X)$.
\begin{remark}\label[remark]{locallycontrshape}
For a locally contractible space $X$, the shape of the $\infty$-topos of hypersheaves $\Shhyp(X)$ over $X$ coincides with the fundamental $\infty$-groupoid $\Sing(T)$~\cite[Proposition~2.2.7]{MR4367219}.
In particular, in the case of $X$ being a CW-space we not only have an equivalence of shapes $$\Shape(X)=\Sing(T)=\Shape(\Shhyp(X))\comma $$ but the $\infty$-topos $\Sh(X)$ itself is already hypercomplete \cite{MR3763287}.
\end{remark}
For an arbitrary $\infty$-topos $\mathcal{X}$, hypercompleteness is not automatically. However, from a homotopy-theoretic point of view, it does not make that much of a difference as the shapes of $\mathcal{X}$ and its hypercompletion $\mathcal{X}^{\mathrm{hyp}}$ are equivalent only in a slightly weaker sense.
\begin{construction}
The \textit{pro-truncated shape} functor
$$\Pi_{<\infty}\colon \RTop_{\infty}\rightarrow \Pro(\Ani_{<\infty})$$
is defined as the composite 
$$\RTop_{\infty}\xrightarrow{\Shape} \ProAni \xrightarrow{\Pro(\tau_{<\infty})} \Pro(\Ani_{<\infty})$$ of the shape functor with the unique cofiltered limit preserving pro-extension of the functor
$$\tau_{< \infty}\colon \Ani \rightarrow \Pro(\Ani_{<\infty}), \quad A \mapsto \{\tau_nA\}_n$$
assigning the \textit{Postnikov tower} to an anima.
\end{construction}
\begin{notation}
    Let $\mathcal{X}$ be an $\infty$-topos. We denote by $\mathcal{X}^{\mathrm{post}}$ its \textit{Postnikov completion}, compare \cite[Subsection A.7.2]{SAG}, and by $\mathcal{X}^{\mathrm{hyp}}$ its \textit{hypercompletion}, compare \cite[Section 6.5.2]{HTT}. As the Postnikov completion of an $\infty$-topos coincides with the Postnikov completion of its hypercompletion and the hypercompletion is a reflective sub-$\infty$-category, we have a chain of geometric morphisms
    $$\mathcal{X}^{\mathrm{post}} \rightarrow \mathcal{X}^{\mathrm{hyp}} \hookrightarrow \mathcal{X} \period$$
\end{notation}
\begin{lemma}\label[lemma]{protruncation_shapeequiv}
Let $\mathcal{X}$ be an $\infty$-topos. The geometric morphisms
$$\mathcal{X}^{\mathrm{post}} \rightarrow \mathcal{X}^{\mathrm{hyp}} \hookrightarrow \mathcal{X}$$
induce equivalences under $\Pi_{<\infty}$, i.e., the shapes of $\mathcal{X}$, the hypercompletion $\mathcal{X}^{\mathrm{hyp}}$ and the Postnikov completion $\mathcal{X}^{\mathrm{post}}$ agree up to pro-truncation.
\end{lemma}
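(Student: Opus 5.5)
The plan is to reduce the statement to a comparison of mapping anima into truncated objects. The key input is a description of the pro-truncated shape by corepresented functors. For any $\infty$-topos $\mathcal{Y}$ with global sections geometric morphism $g\colon \mathcal{Y}\to\Ani$, the pro-anima $\Shape(\mathcal{Y})$ is the left exact accessible functor $A\mapsto g_*g^*(A)=\Hom_{\mathcal{Y}}(\mathbf{1},g^*A)$. The pro-object $\Pi_{<\infty}(\mathcal{Y})\in\Pro(\Ani_{<\infty})$ is then the restriction of this functor to $\Ani_{<\infty}$. To justify this, I would note that $\Pro(\tau_{<\infty})$ is left adjoint to the inclusion $\Pro(\Ani_{<\infty})\hookrightarrow\Pro(\Ani)$. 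Indeed, for truncated $B$, maps $\{\tau_nA\}_n\to B$ in $\Pro(\Ani_{<\infty})$ agree with maps $A\to B$. On the functor side, $\Pro(\tau_{<\infty})$ is therefore precomposition with the inclusion $\Ani_{<\infty}\subset\Ani$.

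With this description, a geometric morphism $h\colon\mathcal{Y}\to\mathcal{X}$ induces an equivalence under $\Pi_{<\infty}$ as soon as, for every truncated anima $A$, the map
\[
\Hom_{\mathcal{X}}(\mathbf{1}_{\mathcal{X}},g_{\mathcal{X}}^*A)\to\Hom_{\mathcal{Y}}(\mathbf{1}_{\mathcal{Y}},h^*g_{\mathcal{X}}^*A)
\]
is an equivalence. A sufficient condition is that $h^*$ restricts to an equivalence, or merely a fully faithful functor, on $n$-truncated objects for every $n$. Here we use that $g^*$ is left exact and so preserves $n$-truncated objects, and that $\mathbf{1}$ is $n$-truncated, so both mapping anima are computed in the subcategories $\tau_{\le n}\mathcal{X}$ and $\tau_{\le n}\mathcal{Y}$.

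It remains to check this condition for the two morphisms. For $\mathcal{X}^{\mathrm{hyp}}\hookrightarrow\mathcal{X}$, the pullback is hypercompletion, a left exact localization. Every truncated object of $\mathcal{X}$ is hypercomplete (\cite[Lemma 6.5.2.9]{HTT}), so the localization is the identity on $\tau_{\le n}\mathcal{X}$. For $\mathcal{X}^{\mathrm{post}}\to\mathcal{X}^{\mathrm{hyp}}$, I would recall from \cite[Section A.7.2]{SAG} that $\mathcal{X}^{\mathrm{post}}=\lim_n\tau_{\le n}\mathcal{X}$ along the truncation functors. The pullback sends an object to its tower of truncations and induces equivalences $\tau_{\le n}\mathcal{X}\simeq\tau_{\le n}\mathcal{X}^{\mathrm{post}}$ (\cite[Proposition A.7.2.x]{SAG}). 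The same holds starting from $\mathcal{X}^{\mathrm{hyp}}$, since $\tau_{\le n}\mathcal{X}^{\mathrm{hyp}}=\tau_{\le n}\mathcal{X}$. Alternatively, one can compose and check directly that $\mathcal{X}^{\mathrm{post}}\to\mathcal{X}$ is an equivalence on $n$-truncated objects, and then deduce the statement for the middle map by two-out-of-three.

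I expect the first step to be the main obstacle: the identification of $\Pro(\tau_{<\infty})$ with restriction to truncated anima. One has to carefully check that the pro-extension of $A\mapsto\{\tau_nA\}$, applied to $g_!(\mathbf{1})$, corepresents $B\mapsto\Hom(\mathbf{1},g^*B)$ for $B\in\Ani_{<\infty}$. This amounts to showing that both functors preserve cofiltered limits and agree on anima, which is the adjunction statement above. The comparisons of truncated objects are then standard citations.
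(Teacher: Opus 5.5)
Your proposal is correct and follows essentially the paper's route. The paper cites \cite[Theorem~A.7.2.4]{SAG} (the precise reference for your placeholder ``Proposition A.7.2.x'') for the fact that Postnikov completion is an equivalence on $n$-truncated objects, and it tacitly uses your criterion that $\Pi_{<\infty}$ only sees mapping anima into truncated objects. For the hypercompletion the paper simply cites \cite[Example~4.2.8]{Exodromy}, whereas you spell out the underlying reason (truncated objects are hypercomplete, \cite[Lemma~6.5.2.9]{HTT}) together with the identification of $\Pro(\tau_{<\infty})$ as restriction to $\Ani_{<\infty}$, which is a valid self-contained substitute.
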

\begin{proof}
For the hypercompletion, see~\cite[Example~4.2.8]{Exodromy}. The statement on Postnikov completions follows by~\cite[Theorem~A.7.2.4]{SAG} which states that the Postnikov completion functor $e^*\colon \mathcal{X} \rightarrow \mathcal{X}^{\mathrm{post}}$ of an $\infty$-topos $\mathcal{X}$ induces an equivalence on $n$-truncations for all $n\geq 0$. 
\end{proof}
\begin{remark}
We can assign homotopy pro-groups to objects in $\ProAni$ by extending simplicial homotopy groups from $\Ani$.
As these groups only depend on some $n$-truncation, the shapes of the $\infty$-topoi $\mathcal{X}$, $\mathcal{X}^{\mathrm{hyp}}$ and $\mathcal{X}^{\mathrm{post}}$ have the same homotopy pro-groups.
\end{remark}
Since we mostly study a pro-anima via its homotopy pro-groups anyway, referring to the last remark, it makes little difference to work with one of the completions.
\begin{example}\label[example]{shape_profinset}
If $S$ is a profinite set, then the $\infty$-topos $\Sh(S)$ is already Postnikov complete as it has finite homotopy dimension \cite[Example 1.28]{arXiv:2210.00186}.
In particular, since Postnikov complete objects are always hypercomplete, one has $$\Sh(S)=\Sh^{\mathrm{post}}(S)=\Shhyp(S)\period $$ The shape $\Shape(S)$ is given by $S$ itself viewed as an object in $\ProAni$ via the fully faithful embedding of pro-\categories $\ProFin \subset \ProAni$ which is induced by extending $\Setfin\subset \Ani$ along cofiltered limits to pro-objects.
This can be seen by combining \cite[Proposition E.2.2.1, Theorem E.2.4.1, Example E.2.4.6]{SAG}.
\end{example}
Restriction of the functor $\Pi_{\infty}\colon \RTop_{\infty}\rightarrow \ProAni$ to the category $\Top$ of topological spaces behaves nicely with respect to homotopy equivalences of topological spaces.
\begin{proposition}\label[proposition]{homotopyinv}
The induced functor $\Pi_{\infty}\colon \Top \xrightarrow{\Sh(\cdot)} \RTop_{\infty}\xrightarrow{\Shape(\cdot)}\ProAni$, which assigns the shape to a topological space, is homotopy invariant, i.e., it inverts homotopy equivalences of topological spaces.
\end{proposition}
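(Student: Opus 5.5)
The plan is to reduce homotopy invariance to the statement that, for every topological space $T$, the projection $p\colon T\times[0,1]\to T$ induces an equivalence $\Shape(T\times[0,1])\simeq\Shape(T)$. Granting this, let $f\colon T\to T'$ have homotopy inverse $g$, with a homotopy $H\colon T\times[0,1]\to T$ from $g\circ f$ to $\mathrm{id}_T$. The inclusions $i_0,i_1\colon T\to T\times[0,1]$ are both sections of $p$, so $\Shape(i_0)$ and $\Shape(i_1)$ are both inverse to $\Shape(p)$ and therefore agree. It follows that
\[\Shape(g\circ f)=\Shape(H)\circ\Shape(i_0)=\Shape(H)\circ\Shape(i_1)=\mathrm{id}.\]
The same argument on the other side shows that $\Shape(f)$ is an equivalence in $\ProAni$.

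For the key step, I would use the description of shape through the pro-left adjoint in \cref{proleftadjoint}. The map $\Shape(p)$ is an equivalence if and only if, for every anima $A$, the map on global sections of constant sheaves
\[\Gamma(T,A)\to\Gamma(T\times[0,1],A)\]
is an equivalence. I would deduce this from the fully faithfulness of $p^*\colon\Sh(T)\to\Sh(T\times[0,1])$ and then invoke \cref{fullyshapeequiv}. Fully faithfulness amounts to the unit $\mathrm{id}\to p_*p^*$ being an equivalence. Since $[0,1]$ is compact Hausdorff, $p$ is proper, so proper base change (Lurie, HTT Corollary 7.3.1.18, which applies to proper maps between arbitrary spaces when the fibre is compact Hausdorff) identifies the stalk of $p_*p^*F$ at $t$ with $\Gamma([0,1],F_t)$, where $F_t$ is the constant sheaf with value the stalk of $F$ at $t$. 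The statement therefore reduces to the following claim.

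\textbf{Claim.} $\Gamma([0,1],\underline{B})\simeq B$ for every anima $B$; equivalently, the constant-sheaf functor $\Ani\to\Sh([0,1])$ is fully faithful.

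This is the contractibility of the shape of $\Sh([0,1])$, which is HTT Theorem 7.1.0.1 together with its proof.

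The main obstacle is verifying that proper base change holds without any Hausdorff or local compactness hypothesis on $T$. HTT 7.3.1.18 concerns a product with a compact Hausdorff space, and the relevant proof (via $\Sh(T\times K)\simeq\Sh(T)\otimes\Sh(K)$ and the fact that $\Sh(K)$ is proper) is exactly what I would cite. If that citation fails for arbitrary $T$, the fallback is to compute stalks directly on basic opens $U\times V$ using the tube lemma, which uses only the compactness of $[0,1]$. Checking the unit on stalks is also legitimate only if $\Sh(T)$ has enough points. It need not, so instead of stalks I would argue on global sections over each open $U\subseteq T$, which again comes down to the tube lemma.
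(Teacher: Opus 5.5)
Your proof has the same structure as the paper's. You reduce to the projection $p\colon T\times[0,1]\to T$ by the standard homotopy argument, then use that $p^*\colon\Sh(T)\to\Sh(T\times[0,1])$ is fully faithful and apply \cref{fullyshapeequiv}. The paper simply cites this full faithfulness \cite[Example A.2.8]{SAG}. The part of your proposal that is not yet a proof is your own derivation of it.

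The hypotheses on $T$ are not the problem: $T\times K\to T$ is proper for arbitrary $T$ once $K$ is compact Hausdorff. The problem is the stalkwise check, which you flag yourself. In the possibly non-hypercomplete $\infty$-topos $\Sh(T)$, a map inducing equivalences on all stalks is only $\infty$-connective. So your argument actually shows that $F\to p_*p^*F$ is $\infty$-connective. An $\infty$-connective map between truncated objects is an equivalence, so this gives $\Gamma(T,A)\simeq\Gamma(T\times[0,1],A)$ only for truncated $A$. That is homotopy invariance of $\Pi_{<\infty}$, not of $\Pi_{\infty}$. The tube-lemma fallback does not repair this: it controls sections of sheaves of sets, not the higher coherence data of sections of $p^*F$ over $U\times[0,1]$.

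The fix stays within your framework. By your own first reduction, you only need the unit on constant sheaves $\pi^*A$, where $\pi\colon\Sh(T)\to\Ani$ is the terminal geometric morphism. For these you can apply base change globally instead of stalkwise:
\begin{itemize}
\item[(a)] $\Sh(T\times[0,1])$ is the product of $\Sh(T)$ and $\Sh([0,1])$ in $\RTop_{\infty}$, by Lurie's product formula, which only needs $[0,1]$ to be locally compact. Write $\pi'$ for the second projection.
\item[(b)] $q\colon\Sh([0,1])\to\Ani$ is proper, since $[0,1]$ is compact Hausdorff.
\end{itemize}
Base change for this square then gives
\[
p_*p^*\pi^*A\simeq p_*\pi'^*q^*A\simeq \pi^*q_*q^*A\simeq \pi^*A ,
\]
where the last step is your Claim. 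Applying global sections gives the required equivalence for every $A\in\Ani$. Alternatively, cite the full faithfulness of $p^*$ as the paper does.
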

\begin{proof}
It is enough to show that $\Shape$ inverts the projection map $X\times [0,1] \rightarrow X$ for all $X \in \Top$. The argument for this is quite standard:
Let $H\colon X \times I\rightarrow Y$ be an arbitrary homotopy between two maps $a=H\circ i_0$ and $b=H\circ i_1$. The projection map being inverted especially implies that its homotopy inverses $i_0$ and $i_1$ are identified.
Thus also $a$ and $b$ are identified.
This indeed shows that in the case of a homotopy equivalence $f\colon X \rightarrow Y$ with homotopy inverse $g\colon Y \rightarrow X$ we can identify $g\circ f$ and $f \circ g$ with the identity morphisms on $X$ and $Y$. 
As every projection $X \times [0,1] \rightarrow X$ of topological spaces induces a geometric morphism with fully faithful pullback morphism on the $\infty$-topoi of sheaves \cite[Example A.2.8]{SAG}, the claim then follows from \cref{fullyshapeequiv}.
\end{proof}
\subsection{Background on condensed mathematics}

We review  condensed sets, condensed anima and condensed $\infty$-categories. As additional sources consult \cite{pyknoticI} and \cite{Scholze:condensednotes}. Note \cref{ignorsetiss} for a comment on set-theoretic issues that come along with definitions in the world of condensed mathematics.
\subsubsection{Condensed sets and condensed anima}
The basic objects in condensed mathematics are \textit{condensed sets}. Originally, these are defined as sheaves of sets on the \textit{pro-\'{e}tale site} $\ProEt{*}$ of a point, i.e., the spectrum of a separably closed field. See \cite{MR3379634} for an introduction to the \textit{pro-\'etale topology of schemes.} The category of condensed sets is denoted by $$\CondSet \colonequals \Sh(\ProEt{*},\Set) \period$$
Sheaves on $\ProEt{*}$ can be identified with sheaves on certain subcategories of topological spaces. The notion of condensed sets extends in the $\infty$-categorical world to the notion of \textit{condensed anima}, denoted by $\CondAni$, by considering \textit{hypersheaves} with values in the $\infty$-category $\Ani$ of \textit{anima} (or spaces).
\begin{notation}\label[notation]{not:topo_notions}
We denote by $\ProFin$ the category of \textit{profinite sets}. By Stone duality, the category $\ProFin$ can be identified with the full subcategory of totally disconnected spaces in the category $\Comp$ of \textit{compact Hausdorff spaces}.
By a result by Gleason, stated in \cite[p. III 3.7]{MR861951}, the projective objects in $\Comp$ are the \textit{extremally disconnected} profinite sets $\Extr \subset \Comp$.
\end{notation}
\begin{recollection}\label[recollection]{rec:definingsites}
There are at least four different defining sites for the $\infty$-category $\CondAni$ of \textit{condensed anima} defining the same $\infty$-topos of hypercomplete $\infty$-sheaves:
\begin{itemize}
    \item[(i)] The pro-\'{e}tale site $\ProEt{\Spec(k)}$ for $k$ any separably closed field,
    \item[(ii)] the site $\Comp$ of compact Hausdorff spaces,
    \item[(iii)] the site $\ProFin$ of profinite sets, 
    \item[(iv)] the site $\Extr$ of extremally disconnected profinite sets.
\end{itemize}
Here, coverings for the sites (ii)-(iv) are given by jointly surjective families of continuous maps of topological spaces.
All of these sites define the same $\infty$-topos on hypercomplete $\infty$-sheaves: Every compact Hausdorff space admits a surjection from an extremally disconnected compact Hausdorff space, namely from the Čech--Stone compactification of its underlying discrete space. Moreover, $\ProFin$ identifies with the subcategory of affine schemes in $\ProEt{\Spec(k)}$. Thus there are hierarchies of bases 
$ \Extr \subset \ProFin \subset \Comp \text{ and } \ProFin \subset \ProEt{\Spec(k)}.$
By \cite[Corollary A.7]{arXiv:2001.00319}, restriction of sites induces equivalences of hypercomplete \topoi
    \begin{equation}\label{eq:equivalent_descriptions_of_condensed_anima}
        \Shhyp(\ProEt{\Spec(k)}) \equivalence \Shhyp(\ProSetfin) \equivalence \Shhyp(\Extr) \period
    \end{equation}
    The \topos $\CondAni$ of \textit{condensed anima} is any of the equivalent \topoi \eqref{eq:equivalent_descriptions_of_condensed_anima}.
\end{recollection}
\begin{recollection}\label[recollection]{rec:fully_faithful_condset_to_condani}
    The category $\CondSet$ of condensed sets embeds fully faithfully into the $\infty$-category $\CondAni$ of condensed anima by post-composition of functors with the fully faithful embedding $v\colon\Set \hookrightarrow \Ani$. The left adjoint is given by post-composition with the $0$-truncation functor $\pi_0\colon \Ani \rightarrow \Set$. In other words, the objects of $\CondAni$ in the essential image of $\CondSet$ are exactly the $0$-truncated objects. They are called \textit{static} condensed anima.
\end{recollection}
\begin{recollection}\label[recollection]{rec:fin_prod_pres_presheaves}
On extremally disconnected profinite sets, hypersheafification can be omitted and the $\infty$-sheaf condition simplifies. More precisely, a presheaf $ F $ on $ \Extr $ is a hypersheaf if and only if $ F $ carries finite disjoint unions to finite products, i.e., there is an identification
    \begin{equation*}
       \CondAni \simeq \Shhyp(\Extr) \equivalent \Funcross(\Extrop,\Ani)\comma
    \end{equation*}
    where the right hand site denotes finite product-preserving functors. It is a consequence of $\Extr$ being a subsite of projective objects and widely used in the literature on condensed mathematics. A detailed discussion can be, for example, found in \cite[Section A.3.5]{CatrinsThesis}.
\end{recollection}
\begin{recollection}
The \textit{animation} of an ordinary category is defined as the $\infty$-category freely generated under sifted colimits by compact projective objects, see \cite[Section 11.1]{Scholze:analyticnotes}. For example, the \category $\Ani$ of anima (or animated sets) is the animation of the category $\Set$ of sets where the compact projectives are exactly the finite sets, see \cite[Example 11.5]{Scholze:analyticnotes}.
\end{recollection}
\begin{definition}{\textbf{Animation of condensed sets}}\label[definition]{anicond}\\
The animation $\Ani(\CondSet)$ of the category $\CondSet$ is the $\infty$-category freely generated under sifted colimits by the category of extremally disconnected profinite sets $\Extr$ as these are the compact projective objects in $\CondSet$, see \cites[Chapter 3]{arXiv:2105.07888}[Example 11.3 (4)]{Scholze:analyticnotes}. 
Equivalently, it can be defined as the full $\infty$-subcategory of functors in
$$\mathrm{Fun}(\Extr^{\mathrm{op}}, \Ani)$$ generated under sifted colimits by the Yoneda image.
\end{definition}
\begin{remark}
From the description of $\CondAni$ in \cref{rec:fin_prod_pres_presheaves}, it follows that sifted colimits in $\CondAni$ can be computed in the presheaf category $ \Fun(\Extr^{\op},\Ani) $ and that there is an equivalence of $\infty$-categories
$$\CondAni \simeq \Ani(\CondSet)\comma $$
also see \cite[Definition 11.7 and Lemma 11.8]{Scholze:analyticnotes}.
\end{remark}
Condensed sets can be seen as a replacement of topological spaces in the following sense.
\begin{example}\label[example]{ex:top}
To a topological space $T\in \Top$, we associate a condensed set $\underline{T}\in \CondSet$ via the restricted Yoneda embedding
\begin{align*}
\underline{T}\colon\Extr^{\mathrm{op}} &\rightarrow \Set, \
S \mapsto \Hom_{\Top}(S,T)\period
\end{align*}
If $T$ is a topological group, every set $\Hom_{\Top}(S,T)$ inherits the group structure and $\underline{T}$ is a group object in $\CondSet$.
Speaking of a topological space in the context of condensed sets, we will usually mean the condensed set associated to the topological space under this functor.\footnote{Note that there are set-theoretic issues in the definition of condensed objects, see \cref{ignorsetiss}. For that reason this construction will not give a condensed set in the sense of Clausen-Scholze \cite{Scholze:condensednotes} if the space $T$ is not $T_1$. This is also commented in \cite[Section 0.3]{pyknoticI} and \cite[Warning 2.14, Proposition 2.15]{Scholze:condensednotes}.}
\end{example}
\begin{remark}\label[remark]{def:compactlygenerated}
The functor $\Top \rightarrow \CondSet$ is not fully faithful in general but when restricted to the full subcategory $\CG\subset \Top $ of compactly generated spaces, see \cite[Proposition 1.7]{Scholze:condensednotes}. Here, \textit{compactly generated} is defined as follows: A topological space $T$ is compactly generated if a map $f \colon T \rightarrow Y$ of topological spaces is continuous if and only if the composite $S \rightarrow T \rightarrow Y$ is continuous for all
compact Hausdorff spaces $S\in \Comp$ mapping to $T$.
\end{remark}
The $\infty$-category $\CondAni$ combines the topological space direction of $\CondSet$ with the homotopy-theoretic direction of $\Ani$.
\begin{recollection}\label[recollection]{rec:discrete_condani}
The $\infty$-category $\Ani$ embeds fully faithfully into $\CondAni$ by the pullback morphism $(\cdot)^{\mathrm{disc}}\colon \Ani \hookrightarrow \CondAni$ of the unique geometric morphism to the (terminal) $\infty$-topos $\Ani$. 
We refer to condensed anima in the essential image as \textit{discrete} condensed anima.
Its right adjoint is the global sections functor $\mathrm{ev}_*\colon \CondAni \rightarrow \Ani$ given by $A\mapsto A(*)$.
\end{recollection}
\subsubsection{Condensed objects in \texorpdfstring{$\infty$}{infinity}-categories}
The description of condensed anima in \cref{rec:fin_prod_pres_presheaves} extends to a general definition of condensed objects of some \category $\Ccal$.
\begin{definition}\label[definition]{def:condobj}
Let $\Ccal$ be an $\infty$-category that admits all finite products. We denote the $\infty$-category of finite product-preserving presheaves $\fromto{\Extr^{\op}}{\Ccal}$ by
$$\Cond(\Ccal)\colonequals \Fun^{\times}(\Extr^{\op}, \Ccal)$$ and refer to its objects as \textit{condensed objects of $\Ccal$}.
\end{definition}
We mainly apply this definition to the ($\infty$-)categories $\Ccal=\Set,\Grp, \Ani$, $\Cat$ and $\Catinfty$.
\begin{warning}\label[warning]{ignorsetiss}
The definition of condensed objects comes with set-theoretic problems due to the size of the category $\Extr$ of extremally disconnected profinite sets. The considered functor categories are not locally small. We follow the suggestion of \cite{pyknoticI} to handle these issues: 
We implicitly fix two strongly inaccessible cardinals $ \delta < \epsilon $ and think about $\Cond(\Ccal)$ as hypersheaves of $\epsilon$-small objects in $\Ccal$ on $\delta$-small objects in $\Extr$. However, as these choices do not affect our results, we will not further comment on set-theoretic issues.
\end{warning}
We have a general version of \cref{rec:discrete_condani} for condensed objects of some $\infty$-category $\Ccal$.
\begin{recollection}\label[recollection]{rec:discrete_functor}
For every $\infty$-category $\Ccal$ such that $\Cond(\Ccal)$ is defined, there is an adjoint pair of functors 
\[
 \begin{tikzcd}[column sep = huge]
           \Cond(\Ccal) \arrow[r, shift left=1ex] & \Ccal \arrow[l, hook',shift left=.5ex, swap]
           \end{tikzcd}
   \]
given by the constant sheaf and global sections functors.
For an object $X\in \Ccal$, the image under $\Ccal \hookrightarrow \Cond(\Ccal)$ is the \textit{discrete condensed object} $X^{\mathrm{disc}}$ given by the assignment
 $$S=\{S_i\}_{i\in I}\mapsto X^{\mathrm{disc}}(S)\colonequals \colim_{i \in I^{\op}} X^{S_i}\comma $$
 where $X^{S_i}$ denotes the product $\prod_{S_i}X$.
 The right adjoint $\Cond(\Ccal) \rightarrow \Ccal, \ X \mapsto X(*)$ sends a condensed object of $\Ccal$ to its \textit{underlying object} in $\Ccal$.
\end{recollection}
The following generalizes the adjoint pair of functors of \cref{rec:fully_faithful_condset_to_condani}.
\begin{recollection}\label[recollection]{rec:adjointcondensed}
If $ \Dcal $ is another \category with finite products and $ F \colon \fromto{\Ccal}{\Dcal} $ is a finite product-preserving functor, we write
    \begin{equation*}
        F^{\cond} \colon \fromto{\Cond(\Ccal)}{\Cond(\Dcal)}
    \end{equation*}
    for the functor given by post-composition with $ F $.
    If $ F \colon \fromto{\Ccal}{\Dcal} $ admits a right adjoint $ G $, then $ G^{\cond} $ is right adjoint to $ F^{\cond} $.
\end{recollection}
For $\Ccal=\Catinfty$, we obtain the $\infty$-category $\CondCat$ of \textit{condensed $\infty$-categories}.
\begin{example}
	We can define condensed \categories $ \ICond(\Ani) $ and $ \ICond(\Set) $ by the assignments
	\begin{equation*}
		S \mapsto \CondAni_{/S} \andeq S \mapsto \Cond(\Set)_{/S} \period
	\end{equation*}
\end{example}
Functors of condensed $\infty$-categories are given as follows.
\begin{recollection}\label[recollection]{rec:contfunc}
Let $\Ccal$ and $\mathcal{D}$ be condensed $\infty$-categories. 
The $\infty$-category $\Functs(\Ccal,\mathcal{D})$ of \textit{continuous functors between condensed $\infty$-categories} is defined as the end of the bifunctor $$(\Extr^{\mathrm{op}})^{\mathrm{op}} \times \Extr^{\mathrm{op}} \rightarrow \Ccal, \ S\times S' \mapsto \Fun(\Ccal(S), \mathcal{D}(S'))\comma $$ 
compare \cite[Definition 13.3.16]{Exodromy}. As proven in \cite[Proposition 2.3]{MR3518559}, this is equivalent to the $\infty$-category $\mathrm{Nat}(\Ccal,\mathcal{D})$ of natural transformations of condensed $\infty$-categories.
We can think of the objects  of $\Functs(\Ccal,\mathcal{D})$ as tuples of functors $\Ccal(S)\rightarrow \mathcal{D}(S)$ indexed by $S\in \Extr$ satisfying certain compatibility conditions. 
\end{recollection}
\subsubsection{Condensed homotopy groups of condensed anima}
For the definition of condensed homotopy groups, note that we have an identification on pointed objects $\CondAni_*=\Cond(\Ani_*)$, see \cite[Remark 2.4.6]{pyknoticI}.
Further recall that for all $n\geq 1$ the simplicial homotopy group functors 
$\pi_n\colon \Ani_* \rightarrow (\categ{Ab})\Grp$ behave well with finite products and thus are compatible with condensed objects by postcomposition, see \cref{rec:adjointcondensed}.
\begin{definition}{\textbf{Condensed homotopy groups}}\label[definition]{def:condensed_homotopy_groups}\\
We define the \textit{condensed homotopy group functors} on pointed condensed anima
$$\pi_n^{\cond}\colon \CondAni_*\rightarrow \CondGrp$$
by level-wise postcomposition with the simplicial homotopy group functors on pointed anima
$\pi_n\colon\Ani_*\rightarrow \Grp,$ for all $n\geq 1$.
\end{definition}
In other words, the \textit{$n$-th condensed homotopy group} $\pi_n^{\cond}(X,x)$ of a pointed condensed anima $(X,x\colon \ast \rightarrow X)$ is the condensed group with value $$\pi_n(X(K), x(K))\in \Grp$$ at every $K\in \Extr$.
For all $n\geq 2$, it is even a condensed abelian group by the corresponding statement for simplicial homotopy groups.
\begin{remark}
{\cite[Example 2.4.7]{pyknoticI}} Together with the underlying condensed set functor $\pi_0^{\cond}\colon \CondAni \rightarrow \CondSet$, defined by post-composition with the connected components functor $\pi_0\colon \Ani \rightarrow \Set$, the condensed homotopy group functors are collectively conservative: A morphism of condensed anima $X\rightarrow Y$ is an equivalence if and only if for every $n\geq 0$, the corresponding morphism on $\pi_n^{\cond}$ is an isomorphism in condensed sets, condensed groups or condensed abelian groups, respectively.
\end{remark}
\begin{example}{\textbf{Discrete condensed anima have discrete homotopy groups}}\\
Let $(A,a)\in \CondAni_*$ be a pointed discrete condensed anima.
For a cofiltered limit representation $K=\lim_{i \in I} K_i \in \Extr$ by finite sets $K_i\in \Setfin$, we have
\begin{align*}
\pi_n(A(K), a(K))&=\pi_n(\colim_{i \in I^{\mathrm{op}}}A^{K_i}, a(K))=\colim_{i \in I^{\mathrm{op}}} \prod_{k\in K_i}\pi_n(A, a(\ast))\\&=\colim_{i \in I^{\mathrm{op}}} \Hom_{\Set}(K_i,\pi_n(A, a(\ast)))=\pi_n(A, a(\ast))^{\mathrm{disc}}(K)
\end{align*}
where $(\cdot)^{\mathrm{disc}}\colon\Grp \hookrightarrow \CondGrp$. We used that simplicial homotopy group functors commute with finite products and filtered colimits. The above means that for all discrete condensed anima, the condensed homotopy are discrete and can be identified with the classical simplicial homotopy groups of the corresponding anima. \end{example}
The example applies to the singular simplicial complex $\Sing(T)\in \Ani$ of a topological space $T$. However, viewing a topological space as a condensed anima via the topological direction, gives a trivial result on condensed homotopy groups.
\begin{remark}{\textbf{Vanishing condensed homotopy groups}}\\
As the category $\CondSet$ of condensed sets forms the sub-$\infty$-category of $0$-truncated objects inside $\CondAni$, 
the condensed homotopy groups vanish for all these. In other words, condensed homotopy groups loose all the information on homotopy groups of topological spaces.
\end{remark}
The remark provides one reason to study the notion of shape for condensed anima and its relation to the shape of topological spaces. In the main part, we will construct and study a 'shape' functor
$$\Shape^c\colon \CondAni \rightarrow \ProAni.$$
Our results there compared with the previous remark then imply that
the following square does not commute for $n\geq1$:
\[
\begin{tikzcd}
 \CondAni_*\ar[r]\ar[d, "\pi_{n}^{\cond}"] & \Pro(\Ani)_* \ar[d, "\Pro(\pi_{n})"] \\
 \CondGrp \ar[r] & \Pro(\Grp).
\end{tikzcd}
\]
This is illustrated by the following simple example:
\begin{example}
Consider the static condensed anima corresponding to the circle $S^1$. Its image under $\CondAni \rightarrow \Pro(\Ani)$ then coincides with its homotopy type $\Sing(S^1)$, see \cref{ex:CW_shape}, and its homotopy pro-groups thus with its classical homotopy groups. By truncatedness, all the condensed homotopy groups of $S^1\in \CondAni$ are trivial but the fundamental group $\pi_1(S^1)=\ZZ$ is not. For $n>1$, we can argue in a similar way by choosing an arbitrary CW-space admitting non-trivial higher homotopy groups.
\end{example}
As an upshot, condensed homotopy groups assign a notion of homotopy groups carrying the additional structure of a topology in a broader sense to a condensed anima. However, this is not very meaningful in the context of viewing condensed sets as a replacement of topological spaces. 
\section{Shapes of condensed anima and topological spaces}
How to possibly do homotopy theory for condensed anima, has already been studied in \cite{arXiv:2105.07888}.
There we asked to what extent the discrete functor $$\Ani\hookrightarrow \CondAni$$ has a left adjoint assigning a homotopy type to every condensed anima.
This was motivated by the fact that a hypothetical left adjoint would reflect the situation of the topological direction, where we have the left adjoint $$\pizerocond\colon \CondAni \rightarrow \CondSet$$ by which a condensed anima can be provided with an underlying topological space.
\subsection{Definition of shapes of condensed anima}\label[subsection]{defshapecondani}
Via the restricted shape functor $\Shape \colon \Top \rightarrow \ProAni$ on topological spaces, we can assign a shape $\Shape(K)\in \ProAni$ to every extremally disconnected profinite set $K\in\Extr$.
As these spaces constitute the building blocks of the $\infty$-topos $\CondAni$ in the sense that $\CondAni$ is freely generated under sifted colimits by $\Extr$, we can extend this to all condensed anima.
\begin{definition}{\textbf{Shapes of condensed anima}}\label[definition]{def:c_shape}\\
We define the \textit{shape functor} on condensed anima
$$\Shape^c\colon \CondAni \rightarrow \ProAni$$ 
as the unique hypercomplete cosheaf extending the restricted shape functor on topological spaces $\Shape \colon \Extr \rightarrow \ProAni$.
For every $X\in \CondAni$, we refer to the image $$\Shape^c(X)\in \ProAni$$ as its \textit{shape}.
\end{definition}
Being a hypercomplete cosheaf is the same as being a functor sending colimits in the hypercomplete $\infty$-topos $\CondAni$ to colimits in $\ProAni$.
As every condensed anima can be represented by a sifted colimit of objects in $\Extr\subset \CondAni$, the construction in the definition of the shape is the extension of the restricted shape functor $\Shape \colon \Extr\rightarrow \ProAni$ by sifted colimits along the Yoneda embedding $\Extr \hookrightarrow \CondAni$.
\begin{remark}\label[remark]{rem:cshape_extdis}
Let $X\in \CondAni$ be represented by a sifted colimit $X=\colim_{i\in I} K_i$ of (extremally disconencted) profinite sets $K_i\in \Extr$.
Then the shape of $X$ is given by
$$ \Shape^c(X)= \colim_{i \in I} \Shape(K) \in \ProAni\period $$
By definition, it is clear that for every $K\in \Extr$, viewed as a condensed anima, the shape $\Shape^c(K)$ coincides with the shape $\Shape(K)$.
As mentioned in \cref{shape_profinset}, this shape can be identified with the pro-anima corresponding to the profinite set $K$ itself under the inclusion $\ProFin \subset \ProAni$.
Thus, by abuse of notation, we can simplify to
$$ \Shape^c(X)=\colim_{i \in I} K \in \ProAni\period $$
Note that the colimit is taken in the $\infty$-category $\ProAni$ and thus corresponds to a limit of finite limit preserving functors $\Ani \rightarrow \Ani$. The shape of $X\in \CondAni$ is independent of the choice of the colimit realizing $X$.
\end{remark}
\subsection{Recovering shapes of topological spaces}
We are going to explore topological spaces $T$ for which the shapes $\Shape^c(T)$ and $\Shape(T)$ can be identified. 
A natural assumption on $T$ is to lie in the category $\CG$ of compactly generated spaces, consisting exactly of those spaces whose topology is already determined by all (extremally disconnected) compact Hausdorff spaces mapping into them.
We show that we indeed get an identification under the additional restriction to paracompact spaces. 
\begin{recollection}
A topological space $T$ is \textit{paracompact} if every open cover can be refined by a locally finite cover, i.e., a cover $\{U_i \subset T\}_{i \in I}$ of open subsets such that for every $x\in T$, there exists some neighborhood $x\in U_x$ having non-empty intersection with only finitely many of the $U_i$.
\end{recollection}
\begin{remark}\label[remark]{rem:comp_para}
This class of spaces still contains all CW-complexes, compact Hausdorff spaces and second countable locally compact Hausdorff spaces, for references see \cite[Proposition 1.18, Proposition 1.20]{Hatcher2017para} and \cite[Theorem 2.6]{Conrad}.    
\end{remark}
\begin{theorem}\label[theorem]{paracomshape}
For all paracompact compactly generated spaces $T$, there is an equivalence $$\Shape^c(T)\simeq\Shape(T)\in \ProAni\period $$
\end{theorem}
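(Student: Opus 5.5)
The plan is to compare both sides through a third description: the shape of an $\infty$-topos built from $T$ via compact Hausdorff spaces. The input for this is the earlier characterization (ii), which says that $\Shape^c(X)\simeq\Shape(\CondAni_{/X})$ for $X\in\CondAni$. Write $\underline{T}$ for the associated condensed set. Since $T$ is compactly generated, $\underline{T}$ is the colimit in $\CondAni$ of $\underline{K}$ over compact Hausdorff spaces $K\to T$ (equivalently, over $\Extr_{/T}$). Because $\Shape^c$ preserves colimits, this gives
$$\Shape^c(\underline{T})\simeq\colim_{K\in\Comp_{/T}}\Shape^c(\underline{K}).$$

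The first key step is the compact Hausdorff case, namely $\Shape^c(\underline{K})\simeq\Shape(K)$ for $K\in\Comp$. I would choose a hypercover $S_\bullet\to K$ by extremally disconnected sets, for instance by iterating Čech–Stone surjections. Then $\underline{K}\simeq|\underline{S_\bullet}|$ in $\CondAni$, and so $\Shape^c(\underline{K})\simeq|\Shape(S_\bullet)|$ in $\ProAni$.

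On the topological side, proper surjections of compact Hausdorff spaces are effective epimorphisms of $\infty$-topoi that satisfy proper base change. Moreover, $\Sh(K)$ is Postnikov complete because $K$ has finite covering dimension, and this should be checked at least up to the pro-truncation of \cref{protruncation_shapeequiv}. Together these give descent, $\Sh(K)\simeq\lim\Sh(S_\bullet)$, so $\Shape(K)\simeq|\Shape(S_\bullet)|$ since $\Shape$ preserves colimits in $\RTop_\infty$. If Postnikov completeness fails, I would instead use that both functors are colimit preserving and reduce to comparing pro-truncations via \cref{protruncation_shapeequiv}.

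The second step passes from compact pieces to $T$ itself, and this is where paracompactness enters. I would show that the induced geometric morphism
$$\colim_{K\subset T}\Sh(K)\longrightarrow\Sh(T)$$
is a shape equivalence, where the colimit is taken in $\RTop_\infty$. A cleaner route is to use paracompactness to show that $\Sh(T)$ is computed via closed covers: every open cover of $T$ admits a locally finite closed refinement and partitions of unity exist. For closed covers by compact subsets I would then apply proper base change and Čech descent for locally finite closed covers, which is valid for paracompact Hausdorff spaces (cf. \cite[Corollary 7.3.5.?]{HTT} style arguments). This identifies $\Shape(T)$ with the geometric realization of the shapes of iterated intersections of compact subsets.

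The compactly generated hypothesis ensures that the same colimit diagram computes $\underline{T}$ in $\CondAni$. Comparing the two colimit formulas term by term via the first step then gives the equivalence $\Shape^c(\underline{T})\simeq\Shape(T)$.

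The main obstacle is the second step. A general paracompact compactly generated space is not locally compact, so $T$ is not a union of compact neighborhoods. A locally finite closed cover by compact sets need not exist, and I would have to work instead with the full filtered diagram of compact subsets and argue cofinality for shapes. What I would try first is the description of shape by cohomology with coefficients in locally constant truncated anima, that is, Hom out of the shape into truncated $\pi$-finite anima. I would then prove
$$\Gamma(T,A)\simeq\lim_{K}\Gamma(K,A)$$
for such constant coefficients $A$, using paracompactness (soft/Čech arguments) together with the fact that the compact subsets detect the topology.
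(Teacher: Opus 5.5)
\textbf{The gap is in your first step, and it cannot be closed.}

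\emph{Step 1.} A compact Hausdorff space need not have finite covering dimension. Take the compact metrizable shrinking wedge $W$ of the spheres $S^n$, $n\geq 1$: spheres of diameter $1/n$ meeting in a single point. For such spaces nothing makes $\Sh(K)$ Postnikov complete. Descent along an extremally disconnected hypercover $S_\bullet\to K$ then returns $\lim\Sh(S_\bullet)$. That is the \'etale $\infty$-topos of $\underline K$, which for compact Hausdorff $K$ is $\Sh^{\mathrm{post}}(K)$, not $\Sh(K)$ (see the example citing Haine in \cref{connectionshape}). So what your argument proves is $\Shape^c(\underline K)\simeq\Shape(\Sh^{\mathrm{post}}(K))$. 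Your fallback via \cref{protruncation_shapeequiv} only gives an equivalence of pro-truncated shapes, i.e.\ \cref{prohoshape}, not an equivalence in $\ProAni$.

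\emph{Step 2.} It has the same defect. Maps into truncated $\pi$-finite anima only detect the profinite completion of a pro-anima, and maps into all truncated anima only detect $\Pi_{<\infty}$. So the continuity statement you propose cannot yield the claim either.

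\emph{The difference is real.} Since $\tau_{\leq n}$ commutes with finite products and filtered colimits, one checks on each $S\in\Extr$ that $A^{\mathrm{disc}}\simeq\lim_n(\tau_{\leq n}A)^{\mathrm{disc}}$ in $\CondAni$. Combine this with the comparison for truncated coefficients behind \cref{prohoshape} and with Lurie's formula from \cite[Section~7.1]{HTT} (the one used in the paper's proof). You get that $\Shape^c(\underline W)$ corepresents $A\mapsto\lim_n\mathrm{Sing}\,\Hom_{\Top}(W,|\tau_{\leq n}A|)$, whereas $\Shape(W)$ corepresents $A\mapsto\mathrm{Sing}\,\Hom_{\Top}(W,|A|)$. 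Now take $A=S^2$.
\begin{itemize}
\item The set $[W,S^2]$ is countable: the space of maps is separable and homotopy classes are open in it.
\item $\pi_0$ of the limit surjects onto $\lim_m[W,|\tau_{\leq m}S^2|]$. This contains a copy of $\prod_{n}\pi_n(S^2)$: realize $a_n$ on the $n$-th sphere for $n\leq m$ and send the remaining spheres to the base point. Infinitely many factors are nonzero, so this set is uncountable.
\end{itemize}
Hence $\Shape^c(\underline W)\not\simeq\Shape(W)$, although $W$ is compact metrizable, hence paracompact and compactly generated.

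\emph{The paper's proof has the same hole.} It uses the same formula together with $T\cong\colim_i K_i$ in $\Top$. But a colimit in the $1$-category $\Top$ only identifies $\Hom_{\Top}(T,|A|)$ with a strict limit of mapping spaces. Matching $\colim_i\Shape(K_i)$ requires the $\infty$-categorical limit $\lim_i\mathrm{Sing}\,\Hom_{\Top}(K_i,|A|)$. Exchanging the two is hyperdescent for non-truncated coefficients, which is exactly what fails for $W$.

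What your method can establish is the pro-truncated comparison, or the full comparison when the sheaf $\infty$-topoi involved are Postnikov complete.
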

\begin{proof}
The result follows from the explicit description of the shape $\Shape(T)$ of a paracompact space $T$ due to Lurie and the fact that for a compactly generated space $T\in \CG$ the expression $T=\colim_{i\in I} K_i$ as a colimit of extremally disconnected profinite sets is not only true as a colimit inside $\CondAni$ but also in $\Top$:
It follows from \cite[Section 7.1.1, especially Corollary 7.1.4.4, Proposition 7.1.5.1]{HTT}, that for a paracompact space $T$ the shape $\Shape(T)$ identifies with $$\Ani \rightarrow \Ani, \quad A \mapsto \Hom_{\Top}(T,\vert A \vert) \period$$
Here, $\vert A \vert$ denotes the geometric realization of the anima $A$, i.e., the image under the left adjoint $\vert \cdot \vert \colon \Ani \rightarrow \Top$ of the functor $\Top \rightarrow \Ani$ assigning to a topological space $T$ its fundamental $\infty$-groupoid $\mathrm{Sing}(T)\in \Ani$.
As $T=\colim_{i\in I} K_i \in \CondAni$ is also assumed to be compactly generated and $\CG \hookrightarrow \CondAni$ is fully faithful, its image under the left adjoint functor $\CondAni \rightarrow \Top$, assigning the underlying topological space to a condensed anima, is given by $\colim_{i\in I} K_i \in \Top$ and coincides with $T \in \CG$. Thus, one has $$\Shape(T)=\colim_{i\in I} \Hom_{\Top}(K_i,\vert \cdot \vert)\in \ProAni\period $$
As every profinite set $K_i$ is a paracompact space, we know by the same reasoning as above that 
$$ \Shape(K_i)=\Hom_{\Top}(K_i,\vert \cdot \vert)\in \ProAni$$ 
By \cref{rem:cshape_extdis}, this already leads to 
$\Shape(T)=\colim_{i\in I} \Shape(K_i)=\Shape^c(T)\in \ProAni. $
\end{proof}
We do not know whether the statement is even valid for arbitrary compactly generated spaces as in that general case we are not in possession of a more explicit description of the shape.
\begin{example}\label[example]{ex:CW_shape}
As every CW-space $T$ is paracompact compactly generated, by combining \cref{paracomshape} with \cref{Shape_CW}, it can be deduced that the shape $\Shape^c(T)$ of its associated condensed anima $T\in \CondAni$ is again the homotopy type $\Sing(T)$. 
\end{example}
We can extend the example to all locally contractible spaces using that the shape functor on condensed anima $\Shape^c$ likewise satisfies homotopy invariance. 
Therefore, we define the concept of homotopy equivalences of condensed anima along the lines of topological spaces.
\begin{definition}{\textbf{Homotopy equivalences of condensed anima}}\\
Let $J=\interval{0}{1} \in \CondAni$ be the condensed anima associated with the unit interval and \begin{align*}
*_0\colon *\rightarrow J \text{ and }
*_1\colon *\rightarrow J
\end{align*}
the morphisms of condensed anima corresponding to the points $0, 1\in J=\interval{0}{1}\in \Top$.\\ A map $f\colon T \rightarrow Y$ of condensed anima is a \textit{homotopy equivalence} if there exists a morphism $g\colon Y \rightarrow T$, the \textit{homotopy inverse}, and \textit{homotopies} 
\begin{align*}
H_1\colon T \times J \rightarrow T \text{ and } H_2\colon Y \times J \rightarrow Y
\end{align*}
such that the compositions of $H_1, H_2$ with the maps 
\begin{align*}
T\times * \xrightarrow{\mathrm{id}\times *_i} T \times J \text{ and } Y\times * \xrightarrow{\mathrm{id}\times *_i} Y \times J
\end{align*}
are given by $g\circ f\colon T \rightarrow T$ for $i=0$ and, respectively, $f\circ g\colon Y \rightarrow Y$ for $i=1$.
\end{definition}
\begin{remark}
As $\Top \rightarrow \CondAni$ is a right adjoint, for every homotopy equivalence of topological spaces $f\colon T \rightarrow Y \in \Top$, the induced map $f \in \CondAni$ is a homotopy equivalence of condensed anima in the sense of the definition.
Moreover, just as in the case of topological spaces, for an $T\in \CondAni$ the projection map $$\mathrm{pr}\colon T \times J \rightarrow T$$ is a common homotopy inverse of the inclusions $i_0, i_1\colon T\simeq T\times \ast \xrightarrow{\mathrm{id}\times \ast_{0,1}} T \times J$. 
\end{remark}
\begin{proposition}\label[proposition]{condensedhomotopyinv}
The shape functor $\Shape^c\colon \CondAni \rightarrow \ProAni$ is homotopy invariant, i.e., inverts homotopy equivalences of condensed anima.
\end{proposition}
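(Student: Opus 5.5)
Mirroring the proof of \cref{homotopyinv}, the formal part reduces everything to one claim: for every $X\in\CondAni$, the projection $\mathrm{pr}\colon X\times J\to X$ is sent to an equivalence by $\Shape^c$. Granting this, for a homotopy $H\colon X\times J\to Y$ from $a=H\circ i_0$ to $b=H\circ i_1$, the inclusions $i_0,i_1$ are both sections of $\mathrm{pr}$. Since $\Shape^c(\mathrm{pr})$ is invertible, we get $\Shape^c(i_0)=\Shape^c(\mathrm{pr})^{-1}=\Shape^c(i_1)$, and hence $\Shape^c(a)\simeq\Shape^c(b)$. Applying this to the homotopies $H_1,H_2$ in the definition shows that $\Shape^c(g)$ is inverse to $\Shape^c(f)$.

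To prove the claim, I would reduce to extremally disconnected $X$. Write $X=\colim_{i\in I}K_i$ as a sifted colimit of $K_i\in\Extr$. Since $\CondAni$ is an $\infty$-topos, colimits are universal, so $X\times J\simeq\colim_{i}(K_i\times J)$. Because $\Shape^c$ preserves colimits by \cref{def:c_shape}, $\Shape^c(\mathrm{pr}_X)$ is the colimit of the maps $\Shape^c(K_i\times J)\to\Shape^c(K_i)$. It therefore suffices to treat $X=K\in\Extr$.

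For $K\in\Extr$, the product $K\times J$ in $\CondAni$ is the condensed set associated to the compact Hausdorff space $K\times[0,1]$, because $\Top\to\CondSet\subset\CondAni$ is a right adjoint and so preserves products. This space is compact Hausdorff, hence paracompact and compactly generated (\cref{rem:comp_para}). So \cref{paracomshape} gives $\Shape^c(K\times J)\simeq\Shape(K\times[0,1])$, and likewise $\Shape^c(K)\simeq\Shape(K)$. By \cref{homotopyinv} (equivalently, \cite[Example A.2.8]{SAG} together with \cref{fullyshapeequiv}), $\Shape$ inverts the topological projection $K\times[0,1]\to K$.

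The main obstacle is naturality. The identifications of \cref{paracomshape} must be compatible with the map induced by $\mathrm{pr}$, so that the square comparing $\Shape^c(\mathrm{pr})$ with $\Shape(\mathrm{pr})$ commutes. I would obtain this by making \cref{paracomshape} functorial. Both sides are computed as $\colim_{K'\to T}\Hom_{\Top}(K',\vert\cdot\vert)$ over the category of extremally disconnected spaces mapping to $T$, and a continuous map $T\to T'$ acts on both sides by postcomposition. This yields a natural transformation $\Shape^c\circ\underline{(\cdot)}\Rightarrow\Shape$ on paracompact compactly generated spaces. It is the canonical comparison map induced by the maps $K'\to T$, and it is an equivalence there.
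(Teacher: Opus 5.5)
Your proposal is correct and follows the paper's proof essentially step by step. You reduce to inverting the projection $X\times J\to X$, write $X$ as a sifted colimit of extremally disconnected sets and commute the product with that colimit, and then, for $K\in\Extr$, combine \cref{paracomshape} for the compact Hausdorff space $K\times[0,1]$ with \cref{homotopyinv}. Your additional step of making the identification in \cref{paracomshape} natural, so that it is compatible with the projection, fills in a point the paper's proof leaves implicit; an abstract equivalence of source and target alone would not force $\Shape^c(\mathrm{pr})$ to be invertible.
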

\begin{proof}
By the same argument as in \cref{homotopyinv}, it is enough to show that for all $T \in \CondAni$ the shape functor $\Shape^c$ inverts the projection map $T\times [0,1] \rightarrow T$.
So let us show that the induced map $$\Shape^c(T\times \interval{0}{1})\rightarrow \Shape^c(T)$$ is an equivalence in $\ProAni$.
As sifted colimits commute with finite products in every $\infty$-topos, a representation of $T=\colim_{i\in I} K_i$ with $I$ sifted yields a representation $$T\times [0,1]=\colim_{i\in I} (K_i \times [0,1])\period $$
We can reduce to spaces of the form $K \times [0,1]$ with $K$ an extremally disconnected profinite set.
For these spaces, we know by \cref{homotopyinv} that $$\Shape^c(K)=\Shape(K)=\Shape(K\times [0,1])\period $$
So it remains to deduce the equivalence
$$\Shape(K\times [0,1])=\Shape^c(K\times [0,1])\period $$
This follows from \cref{paracomshape} as $K\times [0,1]$ is a compact Hausdorff space and thus particularly paracompact compactly generated, see \cref{rem:comp_para}.
\end{proof}
\begin{remark}
In other words, the shape functor factors as a composition $$\Shape^c\colon \CondAni \rightarrow \CondAni[W^{-1}]\rightarrow \ProAni\comma $$ where $\CondAni[W^{-1}]$ is the localization at the class of homotopy equivalences.    
\end{remark}
In particular, every contractible topological space is also contractible as a condensed anima.
This was also explained by Scholze in \cite{Scholze2022mathflowlocont} in order to conclude the following result.
\begin{corollary}\label[corollary]{Locally_contractible_cshape}
For every locally contractible topological space $T$, there is a homotopy equivalence $T \rightarrow \Sing(T)$ of condensed anima and one has $\Shape^c(T)=\Sing(T).$
\end{corollary}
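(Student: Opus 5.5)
The plan is to build the comparison map $T \rightarrow \Sing(T)^{\mathrm{disc}}$ of condensed anima directly and show it is an equivalence after applying $\Shape^c$. That suffices, because by the earlier example on discrete objects $\Shape^c$ of a discrete condensed anima $A^{\mathrm{disc}}$ is $A$ itself. Indeed, $\Shape^c$ is the pro-left adjoint of $(\cdot)^{\mathrm{disc}}$ (or directly: writing $A$ as a colimit of points, $A^{\mathrm{disc}} = \colim *$ and $\Shape^c(*) = *$).

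\textbf{Step 1: the comparison map.} For $K\in\Extr$ there is a natural map
\[
T(K)=\Hom_{\Top}(K,T)\longrightarrow \Sing(T)^{K}\simeq \Hom_{\Ani}(\Shape(K),\Sing(T)).
\]
It sends $f$ to the composite $\Shape(K)\rightarrow \Sing(K)\ldots$. More cleanly, it is the composite $\Shape^c(K)=\Shape(K) \rightarrow \Shape(T) \rightarrow \Sing(T)$, where the last map exists by the pro-left adjoint property. These maps assemble, via adjunction, into a map of condensed anima $T\rightarrow \Sing(T)^{\mathrm{disc}}$.

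\textbf{Step 2: reduction to the sections.} To prove that $\Shape^c$ of this map is an equivalence, I would use a hypercover argument. Local contractibility gives a basis $\mathcal U$ of contractible opens. The idea is to show that $T$, as a condensed anima, is the colimit over the Čech nerve (or a hypercover) of a cover by contractible opens $U_i$, with all finite intersections covered again by contractible opens. This holds because $\underline{(\cdot)}$ restricted to opens sends open covers to effective epimorphisms: any map $K\rightarrow T$ from a compact space locally factors through some $U_i$, and $K$ is extremally disconnected, hence covered by finitely many clopens each landing in some $U_i$. Opens are also preserved under intersection, so $\underline{T}=\colim_{\Delta^{\op}}\coprod \underline{U_{i_0\cdots i_n}}$ as a hypercover colimit in the hypercomplete topos $\CondAni$. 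The same descent holds for $\Sing(T)$ by the classical theorem (Dugger--Isaksen / Lurie, HA A.3.1) that $\Sing$ of a space is the colimit over an open hypercover.

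\textbf{Step 3: contractible pieces.} Since $\Shape^c$ preserves colimits, it suffices to treat each contractible $U$. By \cref{condensedhomotopyinv} and the preceding remark, a contractible space is homotopy equivalent to $*$ as a condensed anima, so $\Shape^c(\underline U)\simeq *\simeq \Sing(U)$, compatibly with the comparison maps. Hence $\Shape^c(T)\simeq\colim \Sing(U_{i_0\cdots i_n})\simeq\Sing(T)$.

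For the strengthened claim that $T\rightarrow\Sing(T)^{\mathrm{disc}}$ is itself a homotopy equivalence, I would instead use that the map is an equivalence locally on contractible pieces. I expect the main obstacle to be Step 2: making the hypercover descent precise in $\CondAni$. Care is needed because general hypercovers are required when intersections are not contractible, and one must verify that the condensed-side colimit is computed correctly. I would handle this by choosing a complete hypercover by contractible basic opens and checking effective epimorphism levelwise on extremally disconnected test objects.
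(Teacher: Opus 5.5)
Your computation of $\Shape^c(T)$ is correct and is essentially the paper's argument. Both use a hypercover of $T$ by coproducts of contractible opens, realized in the hypercomplete $\CondAni$, the fact that $\Shape^c$ preserves colimits, and $\Shape^c(\underline U)\simeq *$ for contractible $U$ by homotopy invariance. The paper identifies the resulting colimit with $\Sing(T)$ by running the same hypercover in $\Shhyp(T)$ and quoting $\Shape(\Shhyp(T))\simeq\Sing(T)$. You go directly through Dugger--Isaksen / HA A.3.1, which is equivalent, and your clopen-refinement check of effective epimorphisms is more explicit than the paper's. Both simplicial diagrams take values in discrete anima, i.e.\ in sets, so they agree as diagrams and not just levelwise.

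Step 1 is unnecessary and mis-justified. For a general space the natural map goes $\Sing(T)\to\Shape(T)$, and the pro-left-adjoint property does not produce a map $\Shape(T)\to\Sing(T)$. To get one you would have to factor $\Sh(K)\to\Sh(T)$ through $\Shhyp(T)$ (possible since $\Sh(K)$ is hypercomplete) and then use $\Shape(\Shhyp(T))\simeq\Sing(T)$. It is simpler to take the comparison map $\underline T\to\Sing(T)^{\mathrm{disc}}$ afterwards, as the adjunct of $\Shape^c(T)\simeq\Sing(T)$ under $\Shape^c\dashv R_{\Ani}$.

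The genuine gap is the claim that $\underline T\to\Sing(T)^{\mathrm{disc}}$ is a homotopy equivalence. Your sentence ``the map is an equivalence locally on contractible pieces'' is false as stated: $\underline{[0,1]}\to *$ is not an equivalence. The correct local fact is that each $\underline U\to *$ is a homotopy equivalence, but that does not survive colimits. The inverses $*\to\underline U$ and the contracting homotopies cannot be chosen compatibly with the face maps.

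In fact the claim fails under the paper's definition. Take $T=S^1$. Since $\underline{S^1}$ is $0$-truncated, any $g\colon B\ZZ^{\mathrm{disc}}\to\underline{S^1}$ factors through $\tau_{\le 0}(B\ZZ^{\mathrm{disc}})=*$, so $g\circ f$ is constant for every $f$. A homotopy $\underline{S^1}\times J\to\underline{S^1}$ from $g\circ f$ to the identity is then, by full faithfulness of compact Hausdorff spaces in $\CondSet$, a null-homotopy of $\mathrm{id}_{S^1}$. Covering $S^1$ by two arcs shows the mechanism concretely: the three maps $\underline U\to *$, $\underline{U\cap V}\to *\sqcup *$, $\underline V\to *$ are homotopy equivalences, yet the induced map of pushouts is $\underline{S^1}\to B\ZZ^{\mathrm{disc}}$. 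The paper's closing step, ``taking the colimit over these homotopy equivalences'', has exactly this defect.

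So do not try to fill this step; weaken it. Because $(\cdot)^{\mathrm{disc}}$ preserves colimits, your hypercover exhibits $\underline T\to\Sing(T)^{\mathrm{disc}}$ as the geometric realization of the levelwise maps $\coprod\underline U\to\coprod *$. It therefore lies in the strongly saturated class generated by the projections $X\times J\to X$. Consequently it is inverted by $\Shape^c$ and by every colimit-preserving functor that inverts homotopy equivalences. That is the form in which the first half of the statement is true and usable.
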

\begin{proof}
The proof follows the idea described by Scholze in \cite{Scholze2022mathflowlocont}:
If $T$ is contractible, it is, as a topological space, homotopy equivalent to the one point space.
A homotopy equivalence of topological spaces translates under $\Top \rightarrow \CondAni$ to a homotopy equivalence of condensed anima.
Applying \cref{condensedhomotopyinv}, the shape of $T$ is thus given by $$\Shape^c(T)=\Shape^c(\ast)=\Shape(\ast)=\ast \in \ProAni\period $$ 
Now every locally contractible space can be covered by contractible opens $T=\cup_{i \in I} U_i$ such that all finite intersections of the $U_i$ again correspond to locally contractible spaces.
Repeating the procedure of covering the intersections by contractible opens, we can gradually build a hypercovering of $T$ with disjoint unions of contractible opens on every level.
Such a hypercovering can be realized both in $\CondAni$ and in the $\infty$-topos $\Shhyp(T)$ of hypersheaves over $T$.
Since the functor $\Top \rightarrow \CondAni$ preserves coproducts, see~\cite[Lemma~3.5]{arXiv:2210.00186}, every disjoint union of contractible spaces in $\CondAni$ belongs to a disjoint union in $\Top$ whereas in $\Shhyp(T)$ they correspond to disjoint unions of representable sheaves corresponding to contractible opens of $T$.
By hypercompleteness of $\CondAni$ and $\Shhyp(T)$, the object $T$ is in both cases equivalent to the colimit of this hypercovering in the respective $\infty$-topos.
Thus, the images under the colimit preserving (pro-left adjoint) functors $$\Shape^c\colon \CondAni \rightarrow \ProAni \text{ and } g_!\colon \Shhyp(T) \rightarrow \ProAni$$ result in the same colimit of disjoint unions of points which lies in the essential image of $j_{\Ani}\colon \Ani \hookrightarrow \ProAni$.
By \cref{locallycontrshape}, this colimit coincides with the homotopy type $\Sing(T)$.
For every contractible space $U$ in one of the disjoint unions of the constructed hypercovering of $T$, we have a homotopy equivalence $U \rightarrow \ast$ with a homotopy inverse $\ast \rightarrow U$.
Taking the colimit over these homotopy equivalences induces a homotopy equivalence $T \rightarrow \Sing(T)$ inside $\CondAni$ as the colimit representation of $\Sing(T)$ translates to $\CondAni$ via $\Ani \hookrightarrow \CondAni$.
\end{proof}
By \cref{condensedhomotopyinv}, every condensed anima that is homotopy equivalent to a discrete condensed anima has the shape of this discrete condensed anima.
The shape of such a condensed anima recovers the related anima itself.
\begin{proposition}\label[proposition]{discconshape}
The shape of a discrete condensed anima $A$ is given by the anima 
$$\Shape^c(A)=A \in \Ani\comma $$
regarded as a pro-anima via the Yoneda embedding $j_{\Ani}\colon \Ani \hookrightarrow \ProAni$.
\end{proposition}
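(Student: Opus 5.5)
Since $\Shape^c$ preserves colimits (it is the hypercomplete cosheaf extending $\Shape$ on $\Extr$), the idea is to write $A^{\mathrm{disc}}$ as a colimit of copies of the point inside $\CondAni$ and compare with the corresponding colimit in $\ProAni$. Every anima is canonically the colimit of the constant diagram at the point indexed by itself, $A\simeq\colim_{A}\ast$ in $\Ani$ (e.g.\ via the straightening of $A\to\ast$, or as the geometric realization of a simplicial set of points). The functor $(\cdot)^{\mathrm{disc}}\colon\Ani\hookrightarrow\CondAni$ is the pullback of a geometric morphism, hence a left adjoint, so it preserves colimits and sends the point to the terminal condensed anima $\ast$. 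Therefore
$$A^{\mathrm{disc}}\simeq\colim_{A}\ast\in\CondAni\period$$

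Next I apply $\Shape^c$. Since it preserves colimits, $\Shape^c(A^{\mathrm{disc}})\simeq\colim_A\Shape^c(\ast)$, computed in $\ProAni$. The point $\ast$ is an extremally disconnected profinite set, so \cref{rem:cshape_extdis} and \cref{shape_profinset} give $\Shape^c(\ast)=\Shape(\ast)=\ast$, the terminal pro-anima, which equals $j_{\Ani}(\ast)$. Hence $\Shape^c(A^{\mathrm{disc}})\simeq\colim_A j_{\Ani}(\ast)$ in $\ProAni$.

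The main obstacle is identifying this colimit with $j_{\Ani}(A)$. The Yoneda embedding $j_{\Ani}\colon\Ani\hookrightarrow\ProAni$ preserves finite colimits, but a priori not infinite ones, since colimits in $\ProAni$ correspond to limits of left exact functors $\Ani\to\Ani$. To handle this, I would compute directly in $\Funlexacc(\Ani,\Ani)^{\op}$. The colimit $\colim_A j_{\Ani}(\ast)$ corresponds to the functor $B\mapsto\lim_A\Hom(\ast,B)=\lim_A B=\Hom_{\Ani}(A,B)$. This functor is left exact and accessible, being corepresented by $A$, so the limit computed in $\Fun(\Ani,\Ani)$ already lies in the full subcategory of left exact accessible functors. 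That subcategory is closed under limits, so this is also the limit there, and it is exactly $j_{\Ani}(A)$. Alternatively, I would note that $\Shape^c$ is the pro-left adjoint of $(\cdot)^{\mathrm{disc}}$ (the composite $\Ani\to\ProAni$ corresponds to $B\mapsto\Hom_{\CondAni}(A^{\mathrm{disc}},B^{\mathrm{disc}})=\Hom_{\Ani}(A,B)$ by full faithfulness). However, that characterization appears later in the paper, so I would use the direct computation above.
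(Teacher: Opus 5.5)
Your proof is correct and is essentially the paper's argument: the paper writes $A$ as a sifted colimit of finite sets $S_i$ (with $\Shape(S_i)=S_i$) instead of as $\colim_A\ast$, and otherwise uses, as you do, that $(\cdot)^{\mathrm{disc}}$, $\Shape^c$ and $j_{\Ani}$ preserve colimits. For the last step, the paper simply notes that $j_{\Ani}$ is a left adjoint: its right adjoint is the materialization $\{A_i\}_i\mapsto\lim_i A_i$, so it preserves all colimits. Your hand computation in $\Funlexacc(\Ani,\Ani)^{\op}$ is therefore valid but not needed.
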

\begin{proof}
Every discrete condensed anima $A\in\CondAni$ is a sifted colimit $A=\colim_{i \in I} S_i$ of finite sets $S_i\in \Setfin$ as this is true inside $\Ani$ and $\Ani \hookrightarrow \CondAni$ is a left adjoint.
Therefore, its shape appears as 
$$\Shape^c(A)=\colim_{i \in I} \Shape(S_i) \in \ProAni\period $$ Regarding a finite set as a finite discrete CW-complex, we know via $\Setfin \hookrightarrow \Ani$ that $$\Shape(S_i)=\Sing(S_i)=S_i \in \Ani\period $$ Since $j_{\Ani}\colon \Ani \hookrightarrow \ProAni$ is a left adjoint, $\colim_{i \in I} S_i \in \ProAni$ coincides with the image $j_{\Ani}(A)=j_{\Ani}(\colim_{i \in I} S_i)=\colim_{i \in I} j_{\Ani}(S_i)\in \ProAni.$
\end{proof}
In the next subsections, we will build a bridge to the results in \cite{arXiv:2105.07888} and explain how the above defined shape deserves its name by taking shapes of $\infty$-topoi into consideration again.
Involving the natural comparison morphism $\Sh(T) \rightarrow \CondAni_{/T}$, we compute the shapes of general locally compact Hausdorff spaces, partially extending results from this subsection.
\subsection{The shape functor as a left adjoint}\label[subsection]{Shapeandprohomo}
The discrete functor $(\cdot)^{\mathrm{disc}}\colon\Ani \hookrightarrow \CondAni$ preserves finite but not all limits.
Hence, considerations on a left adjoint to the discrete functor given on all condensed anima are only wishful thinking. Nevertheless, we show in \cite[Proposition 7.2.2]{arXiv:2105.07888} that one can fix this by forcing the discrete functor to commute with all limits by right Kan extension along $\Ani \hookrightarrow \ProAni$.
Note that this is exactly what happens in the construction of the pro-left adjoint in \cref{proleftadjoint}.\\
One obtains commutative triangles
\begin{equation}\label{triangle-prohomotopy}
\begin{tikzcd}[row sep=huge]
&
\Ani \arrow[hookrightarrow,dl,swap, "(\cdot)^{\mathrm{disc}}"] \arrow[hookrightarrow,dr, "j_{\Ani}"] & 
\\ 
\CondAni \arrow[rr, swap,  yshift=-0.7ex, "L_{\Ani}"] & & \Pro(\Ani) \arrow[ll, swap, yshift=+0.7ex, "R_{\Ani}"]
\end{tikzcd}
\end{equation}
where the right adjoint $R_{\Ani}$ is the unique cofiltered limit preserving extension of the discrete functor $(\cdot)^{\mathrm{disc}}\colon \Ani \hookrightarrow \CondAni$ and $L_{\Ani}$ its left adjoint.
Commutativity of the triangle involving $L_{\Ani}$ is a consequence of $(\cdot)^{\mathrm{disc}}$ being fully faithful and not a general result.
The existence of $L_{\Ani}$ enables us to assign a pro-anima to every condensed anima, a cofiltered system of homotopy types, so to say, a \textit{pro-homotopy type}.
The main results of \cite[Section 7.1]{arXiv:2105.07888} are summarized in the following theorem.
\begin{theorem}{\cite[Section 7.1]{arXiv:2105.07888}}\label[theorem]{thm:masterthesis}
The unique cofiltered limit preserving extension of the discrete functor $(\cdot)^{\mathrm{disc}}\colon \Ani \hookrightarrow \CondAni$, denoted by $R_{\Ani}\colon\Pro(\Ani)\rightarrow \CondAni$, admits a left adjoint $L_{\Ani}\colon \CondAni \rightarrow \Pro(\Ani)$
such that 
\begin{enumerate}
    \item[(i)] its restriction to $\Ani$ is the identity,
    \item[(ii)] on profinite sets it agrees with the inclusion $\ProFin \subset \ProAni$ and
    \item[(iii)] on CW-spaces it recovers the classical homotopy type $\Sing\colon \CW \rightarrow \Ani$.
    \end{enumerate} Moreover, extending the simplicial homotopy group functors on (pointed) anima
\begin{align*}
&\pi_0\colon \Ani \ \rightarrow \Set\\
&\pi_1\colon \Ani_{\ast} \rightarrow \Grp\\
&\pi_n \colon \Ani_{\ast} \rightarrow \Ab\Grp, \text{ for $n\geq 2$ }
\end{align*}
to unique cofiltered limit preserving functors on the corresponding pro-$\infty$-categories 
\begin{align*}
&\Pro(\pi_0)\colon \Pro(\Ani) \ \rightarrow \Pro(\Set)\\
&\Pro(\pi_1)\colon \Pro(\Ani)_{\ast} \rightarrow \Pro(\Grp)\\
&\Pro(\pi_n) \colon \Pro(\Ani)_{\ast} \rightarrow \Pro(\Ab\Grp)
\end{align*}
yields, by composition with the left adjoint $\CondAni \rightarrow \Pro(\Ani)$, a notion of homotopy pro-groups on condensed anima
\begin{align*}
&\widetilde{\pi}_0\colon \CondAni \ \rightarrow \Pro(\Set)\\
&\widetilde{\pi}_1\colon \CondAni_{\ast} \rightarrow \Pro(\Grp)\\
&\widetilde{\pi}_n \colon \CondAni_{\ast} \rightarrow \Pro(\Ab\Grp).
\end{align*}
Then the homotopy pro-groups
\begin{itemize}
    \item[(i)*] agree with the simplicial homotopy groups on all $\Ani \subset \CondAni$,
    \item[(ii)*] are trivial on $\ProFin\subset \CondAni$ for all $n\geq 1$,
    \item[(iii)*] recover the usual homotopy groups from topology for all $\CW \subset \CondAni.$
\end{itemize}
\end{theorem}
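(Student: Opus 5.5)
We construct $L_{\Ani}$ as a pro-left adjoint of the discrete functor, in the spirit of \cref{proleftadjoint}. For $X\in\CondAni$ we set
$$L_{\Ani}(X)\colon \Ani\rightarrow\Ani,\quad A\mapsto \Hom_{\CondAni}(X,A^{\mathrm{disc}}).$$
This lands in $\Pro(\Ani)\simeq\Funlexacc(\Ani,\Ani)^{\op}$ for the following reasons. The discrete functor $(\cdot)^{\mathrm{disc}}$ is the pullback of a geometric morphism, so it is left exact, and $\Hom(X,\cdot)$ preserves limits; hence the composite is left exact. Accessibility holds because $(\cdot)^{\mathrm{disc}}$ is a left adjoint, so it preserves all colimits, and $\Hom(X,\cdot)$ is accessible in a presentable $\infty$-category.

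For the adjunction we write $R_{\Ani}(\{A_i\})=\lim_i A_i^{\mathrm{disc}}$. Then
$$\Hom_{\ProAni}(L_{\Ani}X,\{A_i\})=\lim_i \Hom_{\ProAni}(L_{\Ani}X,A_i).$$
By Yoneda in $\Funlexacc(\Ani,\Ani)^{\op}$, each term is $L_{\Ani}(X)(A_i)=\Hom(X,A_i^{\mathrm{disc}})$, and the limit is $\Hom(X,R_{\Ani}\{A_i\})$. So $L_{\Ani}\dashv R_{\Ani}$. Consequently $L_{\Ani}$ preserves colimits, and comparing with \cref{def:c_shape} identifies it with $\Shape^c$ once it agrees on $\Extr$.

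The three properties are checked as follows.

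\textbf{(i)} For $B\in\Ani$, full faithfulness of $(\cdot)^{\mathrm{disc}}$ gives $L_{\Ani}(B^{\mathrm{disc}})(A)=\Hom_{\Ani}(B,A)$, which is $j_{\Ani}(B)$. Equivalently, one can use \cref{discconshape}.

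\textbf{(ii)} For a profinite $S=\lim S_j$ we have $A^{\mathrm{disc}}(S)=\colim_j A^{S_j}$ by \cref{rec:discrete_condani}. This is the value of the pro-object $\{S_j\}$ on $A$, so $L_{\Ani}(S)=\{S_j\}=S\in\ProFin\subset\ProAni$, matching \cref{shape_profinset}. The point needing care is $\Hom_{\CondAni}(S,A^{\mathrm{disc}})=A^{\mathrm{disc}}(S)$ for $S$ merely profinite rather than extremally disconnected. I expect this step to be the main technical obstacle. I would handle it via the $\ProFin$-site description of \cref{rec:definingsites}, checking that the formula $\colim_j A^{S_j}$ is already a hypersheaf on $\ProFin$, since filtered colimits commute with the finite limits involved in truncated descent, and then passing to Postnikov truncations of $A$ to reduce to the truncated case.

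\textbf{(iii)} For a CW-space $T$, the left adjoint $L_{\Ani}$ agrees with $\Shape^c$: both preserve colimits and agree on $\Extr$ by (ii). Then \cref{ex:CW_shape} gives $L_{\Ani}(T)=\Sing(T)$.

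For the homotopy pro-groups, the results follow by applying the levelwise extensions $\Pro(\pi_n)$ to (i)–(iii). For (i)* and (iii)* this is immediate, since $L_{\Ani}$ lands in constant pro-objects $\Sing(T)$ or $A$, and $\Pro(\pi_n)$ restricts to $\pi_n$ there. For (ii)*, $L_{\Ani}(S)=\{S_j\}$ is levelwise a finite discrete set, so every $\pi_n$ with $n\geq1$ vanishes levelwise at any basepoint, and the pro-group is trivial.
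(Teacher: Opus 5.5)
Your proposal is correct and follows essentially the route the paper itself takes (\cref{shape_prohomotopy_agree} together with \cref{discconshape}, \cref{shape_profinset} and \cref{ex:CW_shape}). The differences are minor: you build $L_{\Ani}$ from the explicit pro-left adjoint formula and prove the adjunction by Yoneda for all pro-objects at once, whereas the paper defines $\Shape^c$ first and checks the adjunction on generators $K\in\Extr$, $A\in\Ani$ (both rest on $\Hom_{\CondAni}(K,A^{\mathrm{disc}})\simeq\colim_s A^{K_s}$), and the obstacle you flag in (ii) for non-extremally-disconnected $S$, which the paper passes over, can be sidestepped by combining \cref{paracomshape} (profinite sets are compact Hausdorff) with \cref{shape_profinset}, though your hyperdescent-plus-Postnikov argument also works.
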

It might not be too surprising that the results (i)-(iii) are in line with what we have shown for the shape beforehand. Indeed, the shape functor as defined in \cref{def:c_shape} agrees with the left adjoint $L_{\Ani}$ constructed and studied in \cite[Proposition 7.2.2]{arXiv:2105.07888}.
\begin{corollary}\label[corollary]{shape_prohomotopy_agree}
The shape functor $\Shape^c$ is left adjoint to the unique cofiltered limit preserving pro-extension\footnote{Note that there is an error in the description of the functor $R_{\Ani}$ in \cite[Lemma 7.2.1]{arXiv:2105.07888} due to non-commutativity of limits and colimits.} 
$$R_{\Ani}\colon \ProAni \rightarrow \CondAni, \ P=\{A_i\}_{i\in I} \mapsto \lim_{i \in I}\ A_i^{\mathrm{disc}}$$
of the discrete functor $(\cdot)^{\mathrm{disc}}\colon \Ani \hookrightarrow \CondAni$. For every $X \in \CondAni$, the image is given by the pro-anima
$$\Ani \rightarrow \Ani, \ A \mapsto \Hom_{\CondAni}(X, A)\period $$
\end{corollary}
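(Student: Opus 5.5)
The plan is to use the Yoneda lemma in $\ProAni$ together with the universal property of $\Shape^c$ as the unique colimit-preserving extension from $\Extr$. Define $L\colon \CondAni \to \ProAni$ by sending $X$ to the finite limit preserving functor $A \mapsto \Hom_{\CondAni}(X, A^{\mathrm{disc}})$. This is a pro-anima because $(\cdot)^{\mathrm{disc}}$ is left exact, being the pullback of a geometric morphism (\cref{rec:discrete_condani}), and it is accessible. We then show that $L$ is left adjoint to $R_{\Ani}$ and that $L \simeq \Shape^c$.

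\emph{Step 1: the adjunction.} Write $P = \{A_i\}_{i\in I}$, i.e.\ $P = \lim_i j_{\Ani}(A_i)$ in $\ProAni$. By the formula for mapping spaces in $\Pro(\Ani)\simeq \Funlexacc(\Ani,\Ani)^{\op}$ and Yoneda,
\[
\Hom_{\ProAni}(L(X), P) \simeq \lim_i \Hom_{\ProAni}(L(X), A_i) \simeq \lim_i L(X)(A_i) = \lim_i \Hom_{\CondAni}(X, A_i^{\mathrm{disc}}) \simeq \Hom_{\CondAni}(X, \lim_i A_i^{\mathrm{disc}}).
\]
The right-hand side is $\Hom_{\CondAni}(X, R_{\Ani}(P))$. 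Naturality in $X$ and $P$ is formal, so $L \dashv R_{\Ani}$. This is the same argument as in \cref{proleftadjoint}, now applied to the $\infty$-topos $\CondAni$ and its global sections geometric morphism.

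\emph{Step 2: $L$ agrees with $\Shape^c$.} Since $L$ is a left adjoint, it preserves all colimits, in particular sifted ones. Because $\CondAni$ is freely generated under sifted colimits by $\Extr$ (\cref{anicond}) and $\Shape^c$ is by definition the sifted-colimit extension of $\Shape|_{\Extr}$, it suffices to give a natural equivalence $L|_{\Extr} \simeq \Shape|_{\Extr}$.

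For $K \in \Extr$ we have $L(K)(A) = A^{\mathrm{disc}}(K)$. Using \cref{rec:discrete_functor} with $K = \lim_j K_j$, $K_j$ finite, this is $\colim_j A^{K_j}$. That is exactly the pro-anima $\lim_j K_j \in \ProFin \subset \ProAni$, because $\Hom_{\ProAni}(\lim_j K_j, A) = \colim_j \Hom(K_j, A)$. By \cref{shape_profinset}, this is $\Shape(K)$.

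\emph{Main obstacle.} The step needing care is naturality in $K$ of this identification, i.e.\ an equivalence of functors on $\Extr$ rather than objectwise. I would avoid choosing limit presentations and argue as follows. The geometric morphism $\Sh(K) \to \Ani$ factors through the identification of $A^{\mathrm{disc}}(K)$ with global sections of the constant sheaf on $K$. Concretely, $\Hom_{\Sh(K)}(\mathbf 1, g^*A) \simeq \colim_j A^{K_j}$, which is the computation underlying \cref{shape_profinset}, and this equivalence is functorial in $K$ via pullback of constant sheaves. Hence $\Shape(K) = g_!(\mathbf 1)$ and $L(K)$ corepresent the same functor naturally in $K$, which completes the identification.
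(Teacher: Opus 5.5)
Your proposal is correct, and it is essentially the paper's argument with the two steps in the opposite order. The paper checks the adjunction $\Hom_{\CondAni}(X,R_{\Ani}(P))\simeq\Hom_{\ProAni}(\Shape^c(X),P)$ directly, reducing to $X=K\in\Extr$ and $P=A\in\Ani$ and computing $A^{\mathrm{disc}}(K)\simeq\colim_s A^{K_s}\simeq\Hom_{\ProAni}(K,A)$, and then reads off the formula from \cref{proleftadjoint}; you instead start from the pro-left adjoint of \cref{proleftadjoint} and identify it with $\Shape^c$ on $\Extr$ via the same computation and \cref{shape_profinset}, while also treating naturality in $K$, which the paper passes over.
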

\begin{proof}
For the left adjointness, 
we show that there is a natural isomorphism in $X\in \CondAni$ and $P\in \ProAni$
$$\Hom_{\CondAni}(X,R_{\Ani}(P))\simeq\Hom_{\ProAni}(\Shape^c(X),P)\period$$
By the constructions of both $\Shape^c$ and $R_{\Ani}$, we are reduced to $X=K\in \Extr \subset \CondAni$ and $P=A\in \Ani \subset \ProAni$. For a representation $K=\{K_s\}_{s\in S}$ by finite sets $K_s\in \Setfin$, we deduce
\begin{align*}
\Hom_{\CondAni}(K,R_{\Ani}(A))&=\Hom_{\CondAni}(K,A^{\mathrm{disc}})\\&\simeq\colim_{s\in S^{\op}}A^{K_s}\\&\simeq\colim_{s\in S^{\op}}\Hom_{\Ani}(K_s,A)\\&\simeq\Hom_{\ProAni}(K,A)\period
\end{align*}
Here the second line follows by the Yoneda lemma and the definition of the discrete functor, see \cref{rec:discrete_functor}, and the last conclusion is by the definition of morphisms in pro-categories. Altogether, the shape functor coincides with the pro-left adjoint of the discrete functor. Thus, the description follows by the general formula for pro-left adjoints in \cref{proleftadjoint}.
\end{proof}
The existence of the right adjoint $R_{\Ani}$ has formal consequences which we explain further below. 
\cref{thm:masterthesis} above explains how to define homotopy pro-group functors on condensed anima. 
We can formulate a more general version of (iii)*.
\begin{remark}
For those topological spaces $X\in \CondAni$ with $\Shape^c(X)=\Sing(T),$ e.g., all locally contractible spaces by \cref{Locally_contractible_cshape}, the homotopy pro-groups are the usual homotopy groups $\pi_n(X,x)$ of $X$ as a topological space as these agree with the simplicial homotopy groups of $\Sing(T) \in \Ani$, see \cite[\href{https://Kerodon.net/tag/00VR}{Example 00VR}]{Kerodon}.
\end{remark}
\begin{remark} The proof of \cite[Proposition 7.2.9]{arXiv:2105.07888} claiming (iii) and (iii)* uses \cite[Lemma 11.9]{Scholze:analyticnotes} by Clausen-Scholze: 
For every CW-space $X$\footnote{More generally, for every condensed set that is a filtered colimit of condensed sets that are built out of pushouts of $S_{n-1} = \delta D_n \hookrightarrow D_n$.}, there is a universal anima $Y\in \Ani$ with a map $X\rightarrow Y$ of condensed anima and such that $Y$ is equivalent to the fundamental $\infty$-groupoid $\mathrm{Sing}(X)$. In other words, $\Sing(T)$ constitutes an initial anima with a map $$X\rightarrow \mathrm{Sing}(X)\in \CondAni\period $$
Taking a closer look at the proof of \cite[Lemma 11.9]{Scholze:analyticnotes}, makes clear that it is actually just based on a by hand computation of the assignment $$A\mapsto \Hom_{\CondAni}(X,A)$$ defining the shape.
\end{remark}
Indeed, the existence of an initial anima $Y$ with a map $X\rightarrow Y$ is equivalent to the fact that the shape $\Shape^c(X)$ is given by $Y \in \Ani$. This is based on the shape functor $$\Shape^c\colon \CondAni \rightarrow \ProAni$$ being part of an adjunction.
\begin{lemma}
For every condensed anima $X\in \CondAni$, the shape $\Shape^c(X)$ is the universal pro-anima with a map from $X$ (to its image under the right adjoint $R_{\Ani}$)
$$ X \rightarrow R_{\Ani}(\Shape^c(X)) \in \CondAni\period $$ Further, the shape $\Shape^c(X)\in \ProAni$ lies in $\Ani$ if and only if there exists a universal anima $A\in Ani$ with a map $X \rightarrow A$. Then one has $\Shape^c(X)=A$.
\end{lemma}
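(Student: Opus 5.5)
The plan is to derive both claims formally from the adjunction $\Shape^c \dashv R_{\Ani}$ of \cref{shape_prohomotopy_agree}, together with the fact that $R_{\Ani}$ restricted along $j_{\Ani}$ is the discrete functor, as in the commutative triangle (\ref{triangle-prohomotopy}).

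For the first claim, let $\eta_X\colon X \rightarrow R_{\Ani}(\Shape^c(X))$ be the unit of the adjunction. The general fact about adjunctions, that the unit is an initial object of the comma category $X \downarrow R_{\Ani}$, is exactly the universality asserted. Concretely, for every $P\in\ProAni$ and every map $f\colon X\rightarrow R_{\Ani}(P)$, the natural equivalence
$$\Hom_{\ProAni}(\Shape^c(X),P)\xrightarrow{\ \simeq\ }\Hom_{\CondAni}(X,R_{\Ani}(P)),\quad \varphi\mapsto R_{\Ani}(\varphi)\circ\eta_X,$$
shows that the space of factorizations of $f$ through $\eta_X$ is contractible. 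So the first statement is a direct reformulation of left adjointness.

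For the second claim, note first that for $A\in\Ani$ we have $R_{\Ani}(j_{\Ani}(A))=A^{\mathrm{disc}}$. Hence a map $X\rightarrow A$ of condensed anima is the same as a map $X\rightarrow R_{\Ani}(A)$. We make precise that "$A$ is a universal anima with a map $u\colon X\rightarrow A$" means: for every $B\in\Ani$, composition with $u$ induces an equivalence $\Hom_{\Ani}(A,B)\simeq\Hom_{\CondAni}(X,B^{\mathrm{disc}})$.

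Suppose first that $\Shape^c(X)\simeq j_{\Ani}(A)$. Then $\eta_X$ is a map $X\rightarrow A^{\mathrm{disc}}$. Using full faithfulness of $j_{\Ani}$ and the adjunction,
$$\Hom_{\Ani}(A,B)\simeq\Hom_{\ProAni}(\Shape^c(X),B)\simeq \Hom_{\CondAni}(X,B^{\mathrm{disc}}),$$
and this composite is precomposition with $\eta_X$. So $A$ is universal.

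Conversely, suppose $u\colon X\rightarrow A$ is universal. By \cref{shape_prohomotopy_agree}, $\Shape^c(X)$ is the pro-anima $B\mapsto\Hom_{\CondAni}(X,B^{\mathrm{disc}})$. Universality identifies this functor naturally in $B$ with $\Hom_{\Ani}(A,\cdot)$, which is the corepresented functor defining $j_{\Ani}(A)$. By the Yoneda lemma in $\Funlexacc(\Ani,\Ani)^{\op}=\ProAni$, it follows that $\Shape^c(X)\simeq j_{\Ani}(A)$. Alternatively, one can map $\Shape^c(X)\rightarrow A$ adjoint to $u$ and check that it induces equivalences on $\Hom_{\ProAni}(\cdot,B)$ for all $B\in\Ani$. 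These functors jointly detect equivalences, since every pro-anima is a cofiltered limit of anima, so maps into an arbitrary pro-anima are limits of maps into anima.

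The only real obstacle is bookkeeping rather than mathematics. One must ensure the identifications are natural in $B$ and compatible with the unit, so that the equivalence of corepresented functors is induced by $u$ itself and not merely by some abstract equivalence; this is what guarantees that $\Shape^c(X)=A$ together with its structure map. One must also fix the $\infty$-categorical meaning of "universal": it should be the contractible-space version, not merely a bijection on $\pi_0$.
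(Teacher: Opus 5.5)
Your proposal is correct and follows the paper's argument. In both, universality is the formal property of the unit $X \rightarrow R_{\Ani}(\Shape^c(X))$ of the adjunction from \cref{shape_prohomotopy_agree}, and the statement about anima comes from restricting to the full subcategory $\Ani \subset \ProAni$. You also make the converse direction explicit, which the paper leaves implicit: via the formula $B \mapsto \Hom_{\CondAni}(X, B^{\mathrm{disc}})$, universality among anima already determines $\Shape^c(X)$ as a pro-anima.
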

\begin{proof}
 We can map the shape given by $$\Shape^c(X)=L_{\Ani}(X)\in \Pro(\Ani)$$ under the right adjoint $$R_{\Ani}\colon \Pro(\Ani) \rightarrow \CondAni$$ back to an object $D\in\CondAni$ and obtain a map $X\rightarrow D$ by the unit of the adjunction. This map is universal in the sense above just by formal properties of the adjunction. As every anima is in particular a pro-anima, we conclude the corresponding statement for $\Shape^c(X)$ lying in the full subcategory $\Ani \subset \ProAni$.
 \end{proof}
\begin{corollary}
For every locally contractible topological space $T$, or more generally, every condensed anima $X=\colim_{i \in I} X_i$ being a colimit of condensed anima $X_i$ whose shape $\Shape^c(X_i)$ lies in $\Ani$, there exists a universal anima $A$ with a map $X\rightarrow A$ coinciding with $\Shape^c(X)$. For every locally contractible topological space $T$, this $A$ is given by $\Sing(T)$.
\end{corollary}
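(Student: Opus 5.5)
The plan is to combine the preceding lemma with the fact that $\Shape^c$ preserves colimits. By \cref{def:c_shape}, $\Shape^c$ is a hypercomplete cosheaf, and by \cref{shape_prohomotopy_agree} it is a left adjoint, so it commutes with all colimits in $\CondAni$. The Yoneda embedding $j_{\Ani}\colon \Ani\hookrightarrow\ProAni$ is also a left adjoint (as used in \cref{discconshape}), so it preserves colimits as well.

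For $X=\colim_{i\in I}X_i$ with each $\Shape^c(X_i)=A_i\in\Ani$, I will compute
$$\Shape^c(X)\simeq\colim_{i\in I}\Shape^c(X_i)\simeq\colim_{i\in I} j_{\Ani}(A_i)\simeq j_{\Ani}\bigl(\colim_{i\in I}A_i\bigr).$$
Here the middle colimit is formed in $\ProAni$ and the last one in $\Ani$. Hence $\Shape^c(X)$ lies in the essential image of $\Ani$, with underlying anima $A\colonequals\colim_{i\in I}A_i$. The preceding lemma then says that $A$ is the universal anima with a map $X\to A$ (more precisely, a map $X\to A^{\mathrm{disc}}=R_{\Ani}(A)$), and that it agrees with $\Shape^c(X)$.

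The only point that needs care is the hypothesis that $j_{\Ani}$ preserves colimits. The paper asserts this in the proof of \cref{discconshape}, and I will cite it there. It is also the step I would double-check: colimits in $\ProAni$ correspond to limits of left exact functors, and a limit of corepresentables is corepresented by the colimit, so the claim is consistent.

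For a locally contractible space $T$, I will not redo the covering argument. \cref{Locally_contractible_cshape} already gives $\Shape^c(T)=\Sing(T)\in\Ani$, and the lemma then directly yields the universal map $T\to\Sing(T)$. This is also an instance of the general statement: the hypercovering of $T$ by disjoint unions of contractible opens built in the proof of \cref{Locally_contractible_cshape} exhibits $T$ as a colimit of condensed anima whose shapes are discrete sets, hence lie in $\Ani$.
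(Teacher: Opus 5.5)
Your proposal is correct and follows essentially the same route as the paper. The shape functor preserves colimits, and so does the Yoneda embedding $j_{\Ani}\colon \Ani\hookrightarrow\ProAni$; together these put $\Shape^c(X)$ in $\Ani$, and the preceding lemma then gives the universal property. Your extra check that $j_{\Ani}$ preserves colimits (it is left adjoint to the limit functor $\ProAni\to\Ani$) and your treatment of the locally contractible case via \cref{Locally_contractible_cshape} only make explicit what the paper's short proof leaves implicit.
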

\begin{proof}
As colimits are preserved under the shape functor, the shape of $X$ is given by 
$$\Shape^c(X)=\colim_{i \in I} \Shape^c(X_i) \in \ProAni\comma $$
where by assumption $\Shape^c(X_i) \in \Ani$. As the Yoneda embedding $j_{\Ani}\colon \Ani \hookrightarrow \ProAni$ is a colimit preserving left adjoint, $\Shape^c(X)$  does also correspond to an object in $\Ani$. The rest follows from the last lemma.  
\end{proof}
\subsection{Shapes of slices and the \'etale \texorpdfstring{$\infty$}{infinity}-topos}\label[subsection]{connectionshape}
In the preceding sections, we have seen two different points of view on the shape of condensed anima each of which contributed different aspects to the overall picture.
Here we will study the situation from a more distant point of view of shapes of $\infty$-topoi: The shape functor on all topological spaces is defined as the composite 
$$\Shape\colon \Top \xrightarrow{\Sh(\cdot)} \RTop_{\infty}\xrightarrow{\Shape} \ProAni\period $$ 
Similarly, by the subsequent observations, the shape functor $\Shape^c$ coincides both with the composite
$$\CondAni \xrightarrow{\CondAni_{/\cdot}} \RTop_{\infty}\xlongrightarrow{\quad \Shape \quad} \ProAni\comma $$
where $\CondAni_{/\cdot}$ assigns the  slice $\infty$-topos $\CondAni_{/X}$ to a condensed anima $X$, as well as the composite
$$\CondAni \xrightarrow{\Sh_{\et}(\cdot) } \RTop_{\infty}\xlongrightarrow{\quad \Shape \quad} \ProAni\comma $$
where the \textit{\'etale $\infty$-topos} $$\Sh_{\et}(X)\simeq \FunctsInt(X, \Ani^{\mathrm{ult}})$$ of a condensed anima $X$ will come up as a full subcategory of
$$\CondAni_{/X}\simeq \FunctsInt(X, \ICond(\Ani))\period $$
This is opening up a new approach to compare the shape $\Shape$ and the shape $\Shape^c$ for arbitrary topological spaces by studying the relations of $\CondAni_{/X}$, $\Sh_{\mathrm{\et}}(X)$ and $\Sh(X)$. 
\subsubsection{Obtaining the shape via shapes of slices and the \'etale \texorpdfstring{$\infty$}{infinity}-topos}
For every condensed anima $X\in \CondAni$, we can consider the slice $\CondAni_{/X}$, which comes with an \textit{essential geometric morphism} $F\colon \CondAni_{/X} \rightarrow \CondAni$, i.e., an adjoint triple
\[
 \begin{tikzcd}[column sep = huge]
  \CondAni_{/X}   \arrow[r, shift left=2.5ex, "f_*"] \arrow[r, shift left=-2.5ex, "f_!"] &\CondAni. \arrow[l, shift left=0ex, "f^*", swap]
          \end{tikzcd}
    \]
The left exact left adjoint $f^*$ is the base change induced by the terminal morphism $f\colon X\rightarrow \ast$, i.e., it is given by pullback along $f$. Its left adjoint $f_!$ is given by postcomposition with $f$.
\begin{corollary}\label[corollary]{relativeshape}
The shape $\Shape^c\colon \CondAni \rightarrow \ProAni$ computes the shape $\Pi_{\infty}(\CondAni_{/\cdot})$.
In other words, it coincides with the composed functor $$\CondAni \xrightarrow{\CondAni_{/\cdot}} \RTop_{\infty}\xlongrightarrow{\quad \Shape \quad} \ProAni\period $$
\end{corollary}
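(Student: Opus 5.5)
We compute the shape of the slice $\infty$-topos $\CondAni_{/X}$ directly via the pro-left adjoint of \cref{proleftadjoint} and compare it with the formula of \cref{shape_prohomotopy_agree}. Let $h\colon \CondAni_{/X}\to \Ani$ be the unique geometric morphism. It factors as the composite of the étale geometric morphism $F\colon\CondAni_{/X}\to\CondAni$ (with $f^*$ pullback along $X\to *$) and the terminal geometric morphism $g\colon \CondAni\to\Ani$. Hence $h^*\simeq f^*\circ(\cdot)^{\mathrm{disc}}$, so $h^*(A)=X\times A^{\mathrm{disc}}\to X$. The terminal object of $\CondAni_{/X}$ is $\mathrm{id}_X$.

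By \cref{proleftadjoint}, $\Shape(\CondAni_{/X})=h_!(\mathbf 1)$ is the left exact functor
\[
A\mapsto \Hom_{\CondAni_{/X}}(\mathrm{id}_X, X\times A^{\mathrm{disc}}).
\]
Since $f^*$ has the left adjoint $f_!$ (postcomposition with $X\to *$), and $f_!(\mathrm{id}_X)=X$, we obtain natural equivalences
\[
\Hom_{\CondAni_{/X}}(\mathrm{id}_X, f^*A^{\mathrm{disc}})\simeq \Hom_{\CondAni}(X, A^{\mathrm{disc}}).
\]
Concretely, sections of the projection $X\times A^{\mathrm{disc}}\to X$ are the same as maps $X\to A^{\mathrm{disc}}$. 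By \cref{shape_prohomotopy_agree}, the right-hand side is exactly the pro-anima $\Shape^c(X)$, viewed as a left exact functor $\Ani\to\Ani$. This gives an objectwise equivalence $\Shape(\CondAni_{/X})\simeq\Shape^c(X)$ in $\ProAni$.

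For the functorial statement, we must check that the identification is natural in $X$. A map $u\colon X\to Y$ induces the étale geometric morphism $\CondAni_{/X}\to\CondAni_{/Y}$ whose pullback is $u^*$. The induced map on shapes comes from the unit, that is, from applying $u^*$ to sections. Under the adjunction equivalences above, this corresponds to precomposition $\Hom(Y,A^{\mathrm{disc}})\to\Hom(X,A^{\mathrm{disc}})$, which is the functoriality of $\Shape^c$.

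The main obstacle is making this naturality coherent at the $\infty$-categorical level, not just on homotopy categories. The cleanest route is to note that the straightening $X\mapsto\CondAni_{/X}$ lands in étale geometric morphisms over $\CondAni$. The composite $\Shape\circ\CondAni_{/\cdot}$ is then corepresented by the functor $X\mapsto \Hom_{\CondAni}(X,(\cdot)^{\mathrm{disc}})$ via the equivalence $(\CondAni_{/X})_{/\mathrm{id}_X}\simeq \CondAni_{/X}$. Both functors are then the restricted Yoneda image of $R_{\Ani}$, and hence they agree as functors.

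Alternatively, one can verify that $X\mapsto\Shape(\CondAni_{/X})$ preserves colimits, because colimits of condensed anima give colimits of slice topoi in $\RTop_\infty$ and $\Shape$ preserves colimits. It agrees with $\Shape^c$ on $\Extr$ by the computation above, so by the uniqueness in \cref{def:c_shape} the two functors coincide.
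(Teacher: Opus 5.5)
Your argument is correct and is essentially the paper's. The paper notes that $h^*\simeq f^*\circ(\cdot)^{\mathrm{disc}}$ yields a commutative triangle $h_!\simeq L_{\Ani}\circ f_!$ of (pro-)left adjoints and evaluates it at the terminal object $\mathrm{id}_X$; your identification $\Hom_{\CondAni_{/X}}(\mathrm{id}_X, f^*A^{\mathrm{disc}})\simeq\Hom_{\CondAni}(X,A^{\mathrm{disc}})$ is the same computation written on corepresenting functors.

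Your naturality discussion goes beyond the paper, which only gives the objectwise identification. Of your two suggestions, the colimit-preservation plus uniqueness argument is the one that settles coherence: on $\Extr$ both functors take values in the $1$-category $\ProFin$, which has discrete mapping spaces, so your homotopy-level naturality check there is enough.
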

\begin{proof}
The functor $\Shape^c=L_{\Ani}\colon \CondAni \rightarrow \Pro(\Ani)$ is unique in the sense of being a left adjoint of the unique cofiltered limit preserving extension of $\Ani \hookrightarrow \CondAni$. We have a commutative triangle 
\[
\begin{tikzcd}[row sep=huge]
&
\CondAni_{/X}\arrow[rightarrow,dl, "f_!"] \arrow[rightarrow,dr, "h_!"] & 
\\ 
\CondAni  \arrow[rr,  "L_{\Ani}"] & & \Pro(\Ani),
\end{tikzcd}
\]
where $h_!$ is the pro-left adjoint of $h^*\colon \Ani \rightarrow \CondAni_{/X}$. As $X\rightarrow X$ is the terminal object in $\CondAni_{/X}$ and its image under $f_!$ is $X\in \CondAni$, we deduce the claim under notice of $\Pi_{\infty}(\CondAni_{/X})=h_!(X)$.
\end{proof}
This alternative description of the shape helps to find its relation to the classical shape in the context of topological spaces: For every topological space $T$, there is a comparison geometric morphism
\begin{align}\label{compmor}
c_T\colon\CondAni_{/T} \longrightarrow \Sh(T)
\end{align}
inducing a comparison map on pro-anima $$\Shape(T) \rightarrow  \Shape^c(T)\in \ProAni\period $$ 
\begin{remark}{\cite[Observation 3.9]{arXiv:2210.00186}}
The pushforward $c_{T,*}$ of (\ref{compmor}) can be described explicitly by the formula $$c_{T,*}(G)(U)=\Hom_{\CondAni_{/T}}(U, G)\comma $$
where the open subset $U\subset T$ is also viewed as an object in $\CondAni_{/T}$. In the reversed direction, the pullback morphism $c^*_T$ is the colimit preserving extension of sending the representable sheaf $h_U \in \Sh(T)$ corresponding to an open subset $U\subset T$ to the static condensed anima assigned to $U\in \Top$ via $\Top \rightarrow \CondAni$.
\end{remark}
We study the comparison morphism (\ref{compmor}) in order to connect the shape $\Shape(T)$ of $T\in \Top$ with the shape $\Shape^c(T)$ of $T\in \CondAni$. 
\begin{example}
Let us revisit our example of locally contractible spaces in \cref{Locally_contractible_cshape}. In the proof, we constructed for such a space $T$ a hypercovering of disjoint unions of contractible opens such that $T$ is the colimit of the hypercovering
in $\CondAni$ and in $\Shhyp(T)$.
By the definition of the colimit preserving pullback 
$$c^{*}_T\colon\Sh(T) \rightarrow \CondAni_{/T}$$
and since it factors over the hypercompletion
$$c^{*, \mathrm{hyp}}_T\colon \Shhyp(T) \rightarrow \CondAni_{/T}\comma $$
by $\CondAni_{/T}$ being hypercomplete, this colimit representation of $T$ is preserved under 
$c^{*, \mathrm{hyp}}_T$. Together with the knowledge on the (c-)shape of contractible spaces, this shows that the comparison map $$\Sing(T) \rightarrow \Shape^c(T)$$ is an equivalence for all locally contractible spaces.
\end{example}
We want to discuss the situation a little more broadly:
There is a general sufficient condition on the comparison morphism to induce an equivalence on shapes.
Namely by \cref{fullyshapeequiv}, this is the case whenever the pullback $$c_{T}^*\colon \Sh(T)\longrightarrow \CondAni_{/T}$$ is fully faithful.
This was examined in more detail by Haine.
\begin{example}{\cite[Corollary 4.7]{arXiv:2210.00186}}
For $K\in \Extr$, the pullback $c_K^*$ always is fully faithful. 
\end{example}
By the universal properties of Postnikov completion and hypercompletion and $\CondAni_{/T}$ being Postnikov complete, the right adjoint pushforward $$c_{T,*}\colon \CondAni_{/T} \longrightarrow \Sh(T)$$ always factors through
$$\Sh^{\mathrm{post}}(T)\rightarrow \Shhyp(T) \rightarrow \Sh(T)\period $$
In \cite[Lemma 4.15., Lemma 4.17. and Lemma 4.18]{arXiv:2210.00186}, Haine points out that hypercompletion and convergence of Postnikov towers in $\Shhyp(T)$ is crucial for $c_T^*$ being fully faithful.
Even for arbitrary compact Hausdorff spaces, we do not necessarily get a fully faithful pullback \cite[Section 4.3]{arXiv:2210.00186}.
Nevertheless, we have the following:
\begin{corollary}\label[corollary]{prohoshape}
Let $T$ be a locally compact Hausdorff space. Then the Postnikov completed comparison morphism $c_{T}^{*,\mathrm{post}}$ induces an equivalence on pro-truncated shapes
$$\Pi_{<\infty}^c(T)\simeq\Pi_{<\infty}(T)\period$$
\end{corollary}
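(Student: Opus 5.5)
We use Haine's result that for a locally compact Hausdorff space $T$ the Postnikov completed comparison $c_T^{*,\mathrm{post}}\colon \Sh^{\mathrm{post}}(T)\to \CondAni_{/T}$ is fully faithful (cf.\ \cite[Lemma 4.15, Lemma 4.17, Lemma 4.18]{arXiv:2210.00186} and the surrounding corollaries). Granting this, the argument is formal and splits into two equivalences of pro-truncated shapes, which we then compose.

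First, since $\CondAni_{/T}$ is Postnikov complete, the comparison morphism $c_T$ of (\ref{compmor}) factors by the universal property of Postnikov completion as
$$\CondAni_{/T}\xrightarrow{c_T^{\mathrm{post}}} \Sh^{\mathrm{post}}(T)\rightarrow \Shhyp(T)\rightarrow \Sh(T)\period$$
The pullback of the first geometric morphism is $c_T^{*,\mathrm{post}}$. By Haine's result it is fully faithful, so \cref{fullyshapeequiv} shows that $c_T^{\mathrm{post}}$ is a shape equivalence:
$$\Shape(\Sh^{\mathrm{post}}(T))\simeq \Shape(\CondAni_{/T})\period$$
By \cref{relativeshape}, the right-hand side is $\Shape^c(T)$. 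Applying $\Pro(\tau_{<\infty})$ turns this into an equivalence $\Pi_{<\infty}(\Sh^{\mathrm{post}}(T))\simeq \Pi^c_{<\infty}(T)$, where $\Pi^c_{<\infty}:=\Pro(\tau_{<\infty})\circ\Shape^c$.

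Second, \cref{protruncation_shapeequiv} says the geometric morphisms $\Sh^{\mathrm{post}}(T)\to\Shhyp(T)\to\Sh(T)$ induce equivalences on $\Pi_{<\infty}$. Hence $\Pi_{<\infty}(\Sh^{\mathrm{post}}(T))\simeq\Pi_{<\infty}(T)$. Composing the two equivalences gives $\Pi^c_{<\infty}(T)\simeq\Pi_{<\infty}(T)$. Because everything factors through the comparison morphism, the resulting equivalence is the map induced by $c_T$.

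The only substantive input is the full faithfulness of $c_T^{*,\mathrm{post}}$ for locally compact Hausdorff $T$; I take it from Haine's analysis, and this is the main obstacle. If a reduction were needed, I would proceed in two steps.
\begin{itemize}
\item For compact Hausdorff $T$, choose a hypercover by extremally disconnected spaces. On each term $c_K^*$ is fully faithful \cite[Corollary 4.7]{arXiv:2210.00186}. Since Postnikov completed hypersheaves satisfy descent along such hypercovers, full faithfulness descends to $T$. The delicate point is exchanging totalizations with the truncations, which is exactly where Postnikov completeness is used.
\item For locally compact $T$, cover $T$ by compact neighbourhoods and glue, using that both sides are sheaves in $T$.
\end{itemize}
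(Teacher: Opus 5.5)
Your proof is correct and follows the paper's proof step for step. Full faithfulness of $c_T^{*,\mathrm{post}}$ is imported from Haine (the paper cites his Corollaries 4.9 and 4.11). \cref{fullyshapeequiv} and \cref{relativeshape} then identify $\Shape(\Sh^{\mathrm{post}}(T))$ with $\Shape^c(T)$, and \cref{protruncation_shapeequiv} passes to $\Pi_{<\infty}(T)$. Your optional sketch of a proof of Haine's result goes beyond what is needed, since the paper also uses that input as a black box.
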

\begin{proof}
By \cite[Corollary 4.9. and Corollary 4.11]{arXiv:2210.00186}, for every locally compact Hausdorff space $T$, the comparison morphism (\ref{compmor}) induces a fully faithful functor on Postnikov completions 
$$c_{T}^{*,\mathrm{post}}\colon \Sh^{\mathrm{post}}(T)\hookrightarrow \CondAni_{/T}\period $$
By \cref{fullyshapeequiv} and \cref{relativeshape}, we have an equivalence of the shape $\Pi_{\infty}^{\mathrm{post}}(T)\colonequals \Shape(\Sh^{\mathrm{post}}(T))$ and the shape $\Shape^c(T)$ of $T\in \CondAni$.
On the other hand, by \cref{protruncation_shapeequiv} the shape $\Pi_{\infty}^{\mathrm{post}}(T)$ is, up to pro-truncation, equivalent to the shape $\Pi_{\infty}(T)$.
\end{proof}
\begin{remark}
Earlier, we already have seen that in the cases of all compact Hausdorff and all second countable locally compact Hausdorff spaces, we have an equivalence 
$$\Shape(T) \simeq \Shape^c(T)$$
without knowing about the comparison morphism. Indeed, this was true for all paracompact compactly generated spaces, see  \cref{paracomshape}. We are not aware whether Haine's result used in the preceding proof can be extended to this class of spaces. Our uncertainty results from the lack of knowledge whether the $\infty$-topos of sheaves on a general (paracompact) compactly generated topological space $T$ is determined by the $\infty$-topoi of sheaves on all $K\in \Extr$ mapping to $T$.
\end{remark}
Corresponding to this remark, 
we make the following definition suggested by Clausen.\footnote{See the online discussion at \cite{Clausen2020Xena} and further at \cite{Ramzi2021}.}
\begin{definition}
Let $X\in \CondAni$. Then $X=\colim_{K\rightarrow X}K$, where the colimit runs over all extremally disconnected profinite sets $K\in \Extr$ mapping to $X$. We define the \textit{\'{e}tale} $\infty$-\textit{topos} of $X$ as $$\Sh_{\mathrm{\et}} (X)\colonequals\lim_{K\rightarrow X} \Sh(K)\period $$
\end{definition}
Fully faithfulness of the comparison morphisms
$$c_K^* \colon \Sh(K) \hookrightarrow \CondAni_{/K}$$ for all $K\in \Extr$ then gives fully faithfulness of the morphism
\begin{align}\label{ffovertop}
\Sh_{\mathrm{\et}}(X)=\lim_{K\rightarrow X} \Sh(K)\longrightarrow \lim_{K\rightarrow X} \CondAni_{/K}=\CondAni_{/X}
\end{align}
by taking the limit. Note that the identification on the right hand side holds as all colimits in the $\infty$-topos $\CondAni$ are van Kampen. The morphism (\ref{ffovertop}) induces an equivalence $$\Pi_{\infty}(\Sh_{\mathrm{\et}}(X))\simeq\Shape^c(X)\in \ProAni\period $$ 
\begin{remark}
By fully faithfulness of (\ref{ffovertop}) and \cite[Lemma 4.18]{arXiv:2210.00186}, all Postnikov towers in $\Sh_{\mathrm{\et}}(X)$ converge. We do not know whether the $\infty$-topos is already Postnikov complete.
\end{remark}
The \'etale $\infty$-topos of a condensed anima defines by left Kan extension along 
$$\Extr \rightarrow \RTop_{\infty}, \quad K \mapsto \Sh(K)\period $$
a colimit preserving functor
$$\CondAni \xrightarrow{\Sh_{\mathrm{\et}}(\cdot)} \RTop_{\infty}, \quad X \mapsto\Sh_{\mathrm{\et}}(X)\period $$ Then the following corollary is a direct consequence of the statements above.
\begin{corollary}\label[corollary]{cor:ettopos_cshape}
The shape functor
$$\Shape^c\colon \CondAni \rightarrow \ProAni$$ agrees with the composite
$$\CondAni \xrightarrow{\Sh_{\mathrm{\et}}} \RTop_{\infty}\xrightarrow{\Shape} \ProAni\period $$
\end{corollary}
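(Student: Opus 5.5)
The plan is to compare the two functors through the slice topoi, using \cref{relativeshape}. That result identifies $\Shape^c(X)$ with $\Shape(\CondAni_{/X})$, naturally in $X$. So it suffices to produce a natural equivalence $\Shape(\Sh_{\et}(X))\simeq \Shape(\CondAni_{/X})$ for every $X\in\CondAni$.

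\textbf{Step 1 (the comparison morphism).} For each $X$, the morphism (\ref{ffovertop}) is a geometric morphism $\CondAni_{/X}\to \Sh_{\et}(X)$. Its pullback
\[
\Sh_{\et}(X)=\lim_{K\to X}\Sh(K)\longrightarrow \lim_{K\to X}\CondAni_{/K}\simeq \CondAni_{/X}
\]
is the limit of the pullbacks $c_K^*$. Each $c_K^*$ is fully faithful by Haine's result recalled in the example above. I would note that a limit of fully faithful functors, taken along compatible diagrams of $\infty$-categories, is again fully faithful, since mapping spaces in a limit are limits of mapping spaces. I would also check that these pullbacks assemble into a natural transformation of diagrams indexed by $\Extr_{/X}$. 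This follows because the comparison $c_T$ is natural in $T$: for a map $K'\to K$, the pullback along $\CondAni_{/K}\to\CondAni_{/K'}$ is compatible with $\Sh(K)\to\Sh(K')$ on representables, namely on open subsets.

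\textbf{Step 2 (shape equivalence).} By \cref{fullyshapeequiv}, the fully faithful pullback gives $\Shape(\Sh_{\et}(X))\simeq\Shape(\CondAni_{/X})$. Combined with \cref{relativeshape}, this yields $\Shape(\Sh_{\et}(X))\simeq \Shape^c(X)$.

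\textbf{Step 3 (naturality, alternative route).} To make the identification functorial, I would argue via universal properties rather than objectwise. The functor $X\mapsto\Sh_{\et}(X)$ is colimit preserving by construction as a left Kan extension along $\Extr\subset\CondAni$, and $\Shape\colon\RTop_\infty\to\ProAni$ preserves colimits because it has a right adjoint. Hence the composite $\Shape\circ\Sh_{\et}$ is a colimit-preserving functor $\CondAni\to\ProAni$. On $K\in\Extr$, the limit defining $\Sh_{\et}(K)$ has a terminal index $\mathrm{id}_K$, so $\Sh_{\et}(K)\simeq\Sh(K)$ and the composite restricts to $\Shape(K)=\Shape^c(K)$. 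By \cref{def:c_shape}, $\Shape^c$ is the unique colimit-preserving (equivalently, sifted-colimit) extension of this restriction, so the two functors agree. This gives naturality for free, and Steps 1–2 serve as a consistency check identifying the map with the comparison morphism.

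The main obstacle is the tension between two descriptions of $\Sh_{\et}(X)$. One is the limit $\lim_{K\to X}\Sh(K)$ in $\RTop_\infty$ (the Definition); the other is the colimit, i.e.\ the left Kan extension, in $\RTop_\infty$. These agree because limits of underlying categories along pullback functors compute colimits in $\RTop_\infty$ (HTT 6.3.3). I would invoke this explicitly, so that the colimit preservation used in Step 3 is justified.
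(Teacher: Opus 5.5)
Your Steps 1--2 are exactly the paper's argument. The paper assembles Haine's fully faithful pullbacks $c_K^*$ into the fully faithful functor (\ref{ffovertop}), deduces $\Shape(\Sh_{\et}(X))\simeq\Shape^c(X)$ via \cref{fullyshapeequiv} and \cref{relativeshape}, and then calls the corollary a direct consequence once $\Sh_{\et}$ has been made a functor by left Kan extension.

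Your Step 3 is a genuinely different, purely formal route, and it is correct. It uses neither Haine's theorem nor \cref{relativeshape}. It only uses that $\Shape\circ\Sh_{\et}$ preserves sifted colimits and agrees with $\Shape$ on $\Extr$. By the universal property of $\CondAni\simeq\Fun^{\times}(\Extr^{\op},\Ani)$ as the free sifted cocompletion of $\Extr$, it must therefore coincide with $\Shape^c$. This gives the identification as functors, not merely objectwise, at no extra cost. What the paper's route buys in exchange is more structure: the equivalence is induced by a concrete geometric morphism $\CondAni_{/X}\to\Sh_{\et}(X)$ with fully faithful pullback. That is the input the surrounding remarks actually need, namely convergence of Postnikov towers in $\Sh_{\et}(X)$ and the embedding $\Ani^{\mathrm{ult}}\hookrightarrow\ICond(\Ani)$.

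Two small precision points for Step 3. First, a left Kan extension along $\Extr\subset\CondAni$ is not colimit preserving ``by construction''. It preserves sifted colimits by \cite[Proposition 5.5.8.15]{HTT}, and only this is needed for your argument. It preserves all colimits because $K\mapsto\Sh(K)$ sends finite disjoint unions to products of $\infty$-categories, which are coproducts in $\RTop_{\infty}$. Second, the fact that colimits in $\RTop_{\infty}$ are computed as limits of underlying $\infty$-categories along pullback functors is \cite[Proposition 6.3.2.3]{HTT}. Section 6.3.3 of HTT instead treats cofiltered limits along pushforwards.
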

Thus, in order to compare the shape $\Shape(T)$ of a topological space $T\in \Top$ with its shape $\Shape^c(T)$ as a condensed anima $T\in \CondAni$, we can study almost just as well the relation of $\Sh(T)$ and the \'{e}tale $\infty$-topos $\Sh_{\mathrm{\et}}(T)$. 
\begin{example}
For every topological space $T$, there is a pullback morphism 
\begin{align}\label{mor:pullback_etale}
\Sh(T) \rightarrow\Sh_{\mathrm{\et}}(T)    
\end{align}
induced by the pullback morphisms over all $K\rightarrow T$, with $K\in \Extr$\comma $$\Sh(T) \rightarrow \Sh(K)\period $$
If $T$ is a compact Hausdorff space, Haine's results in \cite[Corollary 2.8]{arXiv:2210.00186} imply that the pullback (\ref{mor:pullback_etale}) is the Postnikov completion of $\Sh(T)$.   
\end{example}
In Lurie's work on ultracategories, he proves that left ultrafunctors $\Fun^{\mathrm{LUlt}}(T, \Set)$ from a compact Hausdorff space $T$ to the ultracategory $\Set$ recover the sheaf topos $\Sh(T, \Set)$, see \cite[Theorem 3.4.4]{Ultracategories}. Here, we have a variant of this result for all condensed anima and their \'etale $\infty$-topoi by equipping $\Ani$ with a condensed structure.
\begin{definition}
We define a condensed $\infty$-category $\Ani^{\mathrm{ult}}$\footnote{As the condensed structure on $\Ani$ actually arises via the natural ultrastructure on $\Ani$, we use the superscript 'ult'.} by the assignment
\begin{align*}
&\Extr^{\op}\rightarrow \Catinfty, \ K\mapsto \Sh(K)
\end{align*}
which we refer to as the \textit{condensed $\infty$-category of anima}.    
\end{definition}
\begin{proposition}\label[proposition]{prop:localisos_etale}
For every $X\in \CondAni$, we have an equivalence
$$\Sh_{\mathrm{\et}}(X)\simeq\FunctsInt(X, \Ani^{\mathrm{ult}})\comma $$ where we view $X$ as a condensed $\infty$-category via $\CondAni \hookrightarrow \CondCat$.
\end{proposition}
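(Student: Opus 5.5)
The plan is to show that both sides, viewed as functors of $X$, send colimits in $\CondAni$ to limits in $\Catinfty$ and agree on $\Extr$.

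For the left-hand side this is built into the definition. Since $X=\colim_{K\rightarrow X}K$, we have $\Sh_{\mathrm{\et}}(X)=\lim_{K\rightarrow X}\Sh(K)$, the right Kan extension of $K\mapsto \Sh(K)$ along $\Extr\subset\CondAni$. Here $K\mapsto\Sh(K)$ is contravariant on $\Extr$ via pullbacks.

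For the right-hand side, I use the description of $\FunctsInt(X,\Ani^{\mathrm{ult}})$ in \cref{rec:contfunc} as natural transformations of condensed $\infty$-categories, $\mathrm{Nat}(X,\Ani^{\mathrm{ult}})$. The assignment $X\mapsto \mathrm{Nat}(X,\mathcal{D})$ for a fixed condensed $\infty$-category $\mathcal{D}$ takes colimits in $\CondCat$ to limits in $\Catinfty$. This is the universal property of colimits in a functor $\infty$-category, equivalently the end formula. So I must check two things.

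First, the colimit $X=\colim_{K\rightarrow X}K$ computed in $\CondAni$ remains a colimit after applying $\CondAni\hookrightarrow\CondCat$. The subtle point is that this inclusion is a right adjoint (to groupoidification), so it need not preserve colimits. I would handle this as follows. In $\Fun^{\times}(\Extr^{\op},-)$, the colimit over $K\rightarrow X$ is sifted (the index category has finite coproducts), so it can be computed pointwise in presheaves. Sifted colimits of anima computed in $\Catinfty$ agree with those computed in $\Ani$, because $\Ani\subset\Catinfty$ is closed under colimits, being a left adjoint as well as a right adjoint. Hence the colimit is preserved.

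Second, for $K\in\Extr$ the condensed $\infty$-category represented by $K$ is the representable functor $\Hom(-,K)$, since $K$ is a condensed set. The Yoneda lemma for $\mathrm{Nat}$ then gives $\mathrm{Nat}(K,\Ani^{\mathrm{ult}})\simeq\Ani^{\mathrm{ult}}(K)=\Sh(K)$, naturally in $K$. This identification is compatible with pullbacks, because the transition functors of $\Ani^{\mathrm{ult}}$ are by definition the pullback functors $\Sh(K)\rightarrow\Sh(K')$.

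Combining the two steps gives
$$\FunctsInt(X,\Ani^{\mathrm{ult}})\simeq\lim_{K\rightarrow X}\FunctsInt(K,\Ani^{\mathrm{ult}})\simeq\lim_{K\rightarrow X}\Sh(K)=\Sh_{\mathrm{\et}}(X)\period$$

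Two further checks are needed.
\begin{itemize}
\item[(a)] $\Ani^{\mathrm{ult}}$ really is a condensed $\infty$-category, i.e. it sends finite disjoint unions to products. This holds because $\Sh(K\sqcup K')\simeq\Sh(K)\times\Sh(K')$. Otherwise I would sheafify, which does not change $\mathrm{Nat}$ out of sheaves.
\item[(b)] The colimit over the large index category $\Extr_{/X}$ causes set-theoretic issues. These are handled by the conventions of \cref{ignorsetiss}.
\end{itemize}

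The main obstacle is the first step, the colimit preservation under $\CondAni\hookrightarrow\CondCat$. If the siftedness argument fails, for instance because $\Extr_{/X}$ is not sifted for general $X$, I would instead present $X$ as the geometric realization of a simplicial object of coproducts of extremally disconnected sets, i.e. a hypercover. That colimit is sifted and is computed levelwise in both categories. I would then use that $\Sh_{\mathrm{\et}}$ converts it into a limit by the van Kampen argument already used for \eqref{ffovertop}.
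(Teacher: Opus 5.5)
Your proposal is correct and follows the same route as the paper: you identify $\FunctsInt(K,\Ani^{\mathrm{ult}})\simeq\Sh(K)$ for $K\in\Extr$ by the Yoneda lemma, then pass to $\lim_{K\rightarrow X}$ using that $\FunctsInt(-,\Ani^{\mathrm{ult}})$ turns colimits into limits. Your extra check that $X\simeq\colim_{K\rightarrow X}K$ remains a colimit in $\CondCat$, via siftedness of $\Extr_{/X}$, fills in a point the paper passes over silently; it can also be seen directly, since $\CondAni\hookrightarrow\CondCat$ is left adjoint to levelwise passage to the core and therefore preserves all colimits.
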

\begin{proof}
We show the claim for $K\in \Extr$ and then argue by a limit argument: For $K \in \Extr$, the identification
$$\Sh_{\mathrm{\et}}(X)=\Sh(K)=\FunctsInt(K, \Ani^{\mathrm{ult}})$$ follows from the $\infty$-categorical Yoneda Lemma. Indeed, $K\in \CondCat$ can be identified with the image of $K$ under the Yoneda embedding $$\iota \colon \Extr \hookrightarrow \CondCat\period $$ Thus, $\FunctsInt(K, \Ani^{\mathrm{ult}})$ is nothing else than the evaluation of $\Ani^{\mathrm{ult}}$ in $K$ which is, just by definition, exactly given by $\Sh(K)$. As natural transformations are homomorphisms of functors, these are compatible with colimits in the first component: For a general $X\in \CondAni$ given by the colimit $X=\colim_{K \rightarrow X} K$, one has $$ \FunctsInt(X, \Ani^{\mathrm{ult}})= \lim_{K \rightarrow X} \FunctsInt(K, \Ani^{\mathrm{ult}})$$ 
and hence 
$$\Sh_{\mathrm{\et}}(X)=\lim_{K \rightarrow X} \Sh(K)=\lim_{K \rightarrow X} \FunctsInt(K, \Ani^{\mathrm{ult}})=\FunctsInt(X, \Ani^{\mathrm{ult}})$$
finishing the proof.
\end{proof}
\begin{remark}
In particular, the fully faithful pullback morphism 
$$\Sh_{\mathrm{\et}}(X)\hookrightarrow \CondAni_{/X}$$ corresponds to the canonical fully faithful pullback morphism
$$\FunctsInt(X, \Ani^{\mathrm{ult}})\hookrightarrow \FunctsInt(X, \ICond(\Ani))\comma$$
which is induced by the fully faithful embedding $\Ani^{\mathrm{ult}} \hookrightarrow \ICond(\Ani)$ of condensed $\infty$-categories.
Note that the identification
$$\CondAni_{/X}\simeq\FunctsInt(X, \ICond(\Ani))\comma $$    
is a condensed version of the Grothendieck construction proven in \cite[Corollary 3.20]{MR4574234}.
\end{remark}
\begin{example}
For every anima $A\in \Ani \subset \CondAni$, we have an equivalence
$$\Sh_{\et}(A)\simeq\Ani_{/A}\period $$
Indeed, there is an equivalence of $\infty$-categories
$\Functs(A, \Ani^{\mathrm{ult}})=\Fun_{\Catinfty}(A, \Ani)$:\\ Every anima is given as a sifted colimit of finite discrete sets $A=\colim_{i \in I} A_i$ and we have
\begin{align*}
\Functs(A, \Ani^{\mathrm{ult}})
&=\lim_{i \in I}\Functs(A_i, \Ani^{\mathrm{ult}})\\&=\lim_{i \in I}\Sh(A_i)=\lim_{i \in I} \prod_{a\in A_i} \Sh(\ast)\\&=\lim_{i \in I} \prod_{a\in A_i} \Ani=\lim_{i \in I}\Ani_{/{A_i}}\\&=\Ani_{/A}=\Fun_{\Catinfty}(A, \Ani)
\end{align*}
where we used the $\infty$-categorical Yoneda lemma and the Grothendieck construction.
\end{example}

\subsection{Comparison results for cohomology theories}\label[subsection]{Cohomological_results}
We have studied how the passage from topological spaces to condensed anima is compatible with the notion of shape on topological spaces. Here, we will summarize how this has consequences on other important invariants of topological spaces to be detected on the associated condensed anima, namely cohomology. 
\begin{recollection}
In the lecture notes on condensed mathematics by Clausen-Scholze \cite{Scholze:condensednotes}, there is a comparison result on condensed and sheaf cohomology of topological spaces based on work by Dyckoff \cite[Theorem 3.2]{Scholze:condensednotes}. They show that for every compact Hausdorff space $T$ the sheaf cohomology with coefficients in abelian groups agrees with the cohomology internally to the topos of condensed sets, i.e., for every abelian group $A\in \Ab\Grp$ there is an isomorphism    
\begin{align}\label{cohomocomp}
H^*_{\mathrm{sheaf}}(T;A) \xrightarrow{\simeq}     H^*_{\mathrm{cond}}(T;A).
\end{align}
Haine extended this result to locally compact Hausdorff spaces and to very general coefficients (living in the $\infty$-world) \cite[Corollary 4.12]{arXiv:2210.00186} using that the comparison map between sheaf and condensed cohomology is induced by the comparison geometric morphism
$$\CondAni_{/T}\rightarrow \Sh(T)\comma $$
which he showed to have a fully faithful pullback after Postnikov completion and tensoring with any compactly assembled $\infty$-category $\mathcal{E}$. 
\end{recollection}
Our results on the shape $\Shape^c$ provide a further extension of (\ref{cohomocomp}) to all paracompact compactly generated spaces showing that we can even pass over to arbitrary constant coefficients for all compact Hausdorff and CW-spaces. 
We recall the definition of cohomology internally to an $\infty$-topos which follows a general pattern, see \cites[Section 2.3;]{Nikolaus2015Coho}[Definition 7.2.2.14]{HTT}.
\begin{construction}{\textbf{Cohomology with constant coefficients}}\label[construction]{Coho_constantcoeff} \\
Let $\mathcal{X}$ be an $\infty$-topos. The cohomology of an object $X\in \mathcal{X}$ with coefficients in another object $A\in \mathcal{X}$ is defined as the set of connected components 
$$\mathcal{H}_\mathcal{X}(X,A)\colonequals \pi_0(\Hom_\mathcal{X}(X,A)) \in \Set$$
which means nothing else than taking morphisms from $X$ to $A$ in the homotopy category of $\mathcal{X}$. 
If $X=\ast\in \mathcal{X}$ is the terminal object, we refer to $$\mathcal{H}(\mathcal{X},A)\colonequals \mathcal{H}_\mathcal{X}(\ast,A)$$ simply as the \textit{cohomology of $\mathcal{X}$} with coefficients in $A$.
Via the canonical geometric morphism
\[
 \begin{tikzcd}[column sep = huge]
           \mathcal{X} \arrow[r, shift left=1ex, "f_*"] & \Ani\comma \arrow[l, shift left=.5ex, "f^*"]
           \end{tikzcd}
\]
we can assign a locally constant object $f^*(A)\in \mathcal{X}$ to every object $A\in \Ani$. Then the cohomology
$$\mathcal{H}_\mathcal{X}(X,A)\colonequals\mathcal{H}_\mathcal{X}(X,f^*(A))= \pi_0(\Hom_\mathcal{X}(X,f^*(A))$$ 
is the \textit{cohomology with constant coefficients} in $A$.
By left adjointness, cohomology of $X$ with constant coefficients in $A$ is computed by the shape $\Shape(\mathcal{X}_{/X})$ as 
\begin{align}\label{cohomology_shape}
\Hom_\mathcal{X}(X,f^*(A))= \Hom_{\ProAni}(\Shape(\mathcal{X}_{/X}), A).
\end{align}
For example, we can think about $A$ as the \textit{delooping} (or Eilenberg-MacLane object of degree $1$) of an abelian group $G$. This is the groupoid $A=BG\in \Ani$ with one object and the group $G$ as morphism space. It defines an abelian group object in $\Ani$ and its image $f^*(A)$ is also one in $\mathcal{X}$ as $f^*$ is finite limit preserving. If we take $B^nG$ the \textit{$n$-fold delooping} (or Eilenberg-MacLane object in degree $n$) of an abelian group, we refer to 
$$\mathcal{H}_\mathcal{X}^n(X,G)\colonequals \pi_0(\Hom_\mathcal{X}(X,f^*(B^nG))$$
as the \textit{abelian sheaf cohomology in degree $n$} of $X$ with coefficients in $G$.
In this case, the cohomology set inherits the structure of a group and we can talk about \textit{cohomology groups}.
More generally, $A$ can be chosen to be an arbitrary abelian group object in $\mathcal{X}$.
\end{construction}
This general $\infty$-topos-theoretic perspective on abelian sheaf cohomology agrees with the expression in the language of derived functors
$$\mathcal{A}\mathrm{b}(\mathcal{X})\rightarrow \Ab\Grp, \quad A\mapsto R^n\Gamma(X,A)$$ of the global sections functor of abelian sheaves $\mathcal{A}\mathrm{b}(\mathcal{X})$ in $\mathcal{X}$ which is used in \cite[Theorem 3.2]{Scholze:condensednotes} to compare condensed cohomology and sheaf cohomology of topological spaces.
\begin{remark}{\textbf{Cohomology theories of topological spaces}}\\
On topological spaces $T\in \Top$, there are basically three well-known cohomology theories with coefficients in abelian groups $A\in \Ab\Grp$:
\begin{itemize}
    \item[1.] \textit{Singular cohomology:} $H^n_{\mathrm{sing}}(T,A)$
    \item[2.] \textit{\v{C}ech cohomology:} $H^n_{\text{\v{C}}\mathrm{ech}}(T,A)$
    \item[3.] \textit{Sheaf cohomology:} $H^n_{\mathrm{sheaf}}(T,A)$
\end{itemize}
Sheaf cohomology can be defined following the general pattern described in \cref{Coho_constantcoeff} applied to the $\infty$-topos $\mathcal{X}=\Sh(T)$, compare \cite[Section 6.5.4 (5)]{HTT}, and can thus be formulated for all constant coefficients $A\in \Ani$, whereby singular cohomology is defined by certain cochain complexes and \v{C}ech cohomology comes from cohomology groups of \v{C}ech complexes of finite open covers of $T$.
\end{remark}
\begin{remark}\label[remark]{rem:cohomol_agree_locally_contractible}
In general, the cohomology theories on topological spaces differ. For all paracompact Hausdorff spaces the natural existing map $$H^n_{\text{\v{C}}\mathrm{ech}}(T,A) \rightarrow H^n_{\mathrm{sheaf}}(T,A)$$ is known to be an isomorphism \cite[Th\'eor\`eme 5.10.1]{Godement1973cohopara}. If $T$ is locally contractible, sheaf cohomology coincides with singular cohomology, see, e.g., \cite[Theorem 12]{Clausen}. Even more general statements can be found in \cite{Petersen_2022} and \cite{sella2016comparisonsheafcohomologysingular}.
\end{remark}
We define condensed cohomology as cohomology internally to the $\infty$-topos $\CondAni$.
\begin{definition}{\textbf{Condensed cohomology}}\label[definition]{def:cond_cohomology}\\
Let $X\in \CondAni$. The \textit{condensed cohomology} of $X$ with values in $A\in \CondAni$ is defined as 
$$H_{\mathrm{cond}}(X,A)\colonequals\mathcal{H}_{\CondAni}(X,A)\colonequals \pi_0(\Hom_{\CondAni}(X,A))\period $$ 
If $A=G$ is an abelian group object in $\CondAni$, i.e., an object in the category of condensed abelian groups $\Cond(\Ab\Grp)$, we have the \textit{condensed cohomology groups}
$$H^n_{\mathrm{cond}}(X,G)\colonequals H_{\mathrm{cond}}(X,B^nG)\comma $$
where $B^nG$ is the $n$-fold delooping or Eilenberg-MacLane object of degree $n$.
\end{definition}
By (\ref{cohomology_shape}) in \cref{Coho_constantcoeff}, for an arbitrary $\infty$-topos $\mathcal{X}$ the cohomology $\mathcal{H}(\ast,A)$ with constant coefficients $A\in \Ani$ only depends on the shape $\Shape(\mathcal{X})$. Hence, two $\infty$-topoi with the same shape also have the same constant cohomology. In particular, their cohomologies with values in abelian groups agree.
\begin{lemma}
For all topological spaces $T$ and $A\in \Ani$, there is a natural comparison map 
$$H_{\mathrm{sheaf}}(T,A) \rightarrow H_{\mathrm{cond}}(T,A) \in \Set\period $$
\end{lemma}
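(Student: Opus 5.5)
The plan is to realize the comparison map as the map on constant cohomology induced by the comparison geometric morphism \eqref{compmor}, $c_T\colon \CondAni_{/T}\rightarrow \Sh(T)$, equivalently by the induced map of shapes $\Shape(T)\rightarrow \Shape^c(T)$. Write $g\colon \Sh(T)\rightarrow \Ani$ and $h\colon \CondAni_{/T}\rightarrow \Ani$ for the terminal geometric morphisms. By uniqueness of geometric morphisms to $\Ani$ we have $h\simeq g\circ c_T$, hence $h^*\simeq c_T^*\circ g^*$. For $A\in \Ani$ we then take the composite
$$\Hom_{\Sh(T)}(\ast, g^*A)\xrightarrow{c_T^*}\Hom_{\CondAni_{/T}}(c_T^*\ast, c_T^*g^*A)\simeq \Hom_{\CondAni_{/T}}(\mathrm{id}_T, h^*A)\period$$
The identification uses that $c_T^*$ is left exact, so it preserves the terminal object.

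Next we identify the target with condensed cohomology. Write $f\colon T\rightarrow \ast$, so $h^*A=T\times A^{\mathrm{disc}}\rightarrow T$ is the pullback along $f$. The adjunction $f_!\dashv f^*$ gives
$$\Hom_{\CondAni_{/T}}(\mathrm{id}_T, f^*A^{\mathrm{disc}})\simeq \Hom_{\CondAni}(T, A^{\mathrm{disc}})\period$$
Applying $\pi_0$ yields $H_{\mathrm{sheaf}}(T,A)\rightarrow H_{\mathrm{cond}}(T,A)$, where $A$ is regarded as the condensed anima $A^{\mathrm{disc}}$, in accordance with \cref{def:cond_cohomology}. Alternatively, and more transparently, one can use \eqref{cohomology_shape} together with \cref{relativeshape} and \cref{shape_prohomotopy_agree}. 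This gives
$$\Hom_{\Sh(T)}(\ast,g^*A)\simeq \Hom_{\ProAni}(\Shape(T),A) \andeq \Hom_{\CondAni}(T,A^{\mathrm{disc}})\simeq \Hom_{\ProAni}(\Shape^c(T),A)\comma$$
and the comparison map is precomposition with the shape map $\Shape(T)\rightarrow \Shape^c(T)$ induced by $c_T$, as in the remark on functoriality of $\Shape$.

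Finally we check naturality in $T$ and in $A$. Naturality in $A$ is immediate, since everything is given by precomposition with a fixed map of pro-anima. For naturality in $T$, a continuous map $u\colon T'\rightarrow T$ should induce a commutative square of geometric morphisms involving $c_{T'}$, $c_T$, $\Sh(T')\rightarrow \Sh(T)$ and $\CondAni_{/T'}\rightarrow \CondAni_{/T}$, the last being given by pullback along $u$. We verify this on representables using the explicit description of $c_T^*$ (it sends $h_U$ to $\underline{U}$), together with the fact that $\Top\rightarrow \CondAni$ preserves pullbacks, being a right adjoint, so that $\underline{u^{-1}U}=\underline{T'}\times_{\underline T}\underline U$. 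Since the pullback functors preserve colimits, it suffices to check this on representables.

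I expect this compatibility square to be the main point requiring care, as the rest is formal adjunction bookkeeping. Even this step only requires comparing two colimit-preserving left exact functors on generators.
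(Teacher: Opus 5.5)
Your main argument is the paper's: the paper applies the unit of $c_T^*\dashv c_{T,*}$ to the constant sheaf and uses $\Gamma_2\simeq\Gamma_1\circ c_{T,*}$, which gives exactly your map $\Hom_{\Sh(T)}(\ast,g^*A)\rightarrow\Hom_{\CondAni_{/T}}(c_T^*\ast,c_T^*g^*A)$. You also spell out two things the paper leaves implicit: the identification $\Hom_{\CondAni_{/T}}(\mathrm{id}_T,f^*A^{\mathrm{disc}})\simeq\Hom_{\CondAni}(T,A^{\mathrm{disc}})$ via $f_!\dashv f^*$, and naturality in $T$.

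One correction to your optional shape reformulation concerns the direction of the arrow. $\Shape$ is covariant in geometric morphisms, so $c_T$ induces a map $\Shape^c(T)\rightarrow\Shape(T)$ in $\ProAni$. The arrow $\Shape(T)\rightarrow\Shape^c(T)$ is only the corresponding natural transformation of left exact functors $\Ani\rightarrow\Ani$, i.e.\ a map in $\ProAni^{\op}$. Precomposition with $\Shape^c(T)\rightarrow\Shape(T)$ gives $H_{\mathrm{sheaf}}(T,A)\rightarrow H_{\mathrm{cond}}(T,A)$, whereas precomposing with a map $\Shape(T)\rightarrow\Shape^c(T)$ would give a map in the opposite direction.
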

\begin{proof}
This follows from commutativity of the triangle of right adjoints
\begin{equation}
\begin{tikzcd}
&
\Ani  & 
\\ 
\Sh(T)\arrow[ur, "\Gamma_1"]& & \CondAni_{/T} \arrow[ul, swap, "\Gamma_2"] \arrow[ll, "c_{T,*}"]\end{tikzcd}
\end{equation}
inducing a natural transformation $\Gamma_1 \rightarrow \Gamma_2\circ c_T^*$ which is nothing else than a map
$$\Hom_{\Sh(T)}(T,A) \rightarrow \Hom_{\CondAni_{/T}}(T,A)$$
natural in $A\in \Ani$, by identifying $A$ with the corresponding constant sheaf in the $\infty$-topoi. Applying $\pi_0$ gives the cohomology result.
\end{proof}
As a consequence, for $T$ a paracompact compactly generated or locally contractible space, sheaf cohomology and condensed cohomology with (truncated) constant coefficients agree.
\begin{proposition}\label[proposition]{prop:coho_cond_sheaf}
We have a natural isomorphism
$$H_{\mathrm{sheaf}}(T,A) \xrightarrow{\simeq} H_{\mathrm{cond}}(T,A) \in \Set$$
\begin{enumerate}
    \item for all $A\in \Ani$ if $T\in \Top$ is paracompact compactly generated.
    \item for all $A\in \Ani_{<\infty}$ if $T\in \Top$ is locally contractible.
\end{enumerate}
 \end{proposition}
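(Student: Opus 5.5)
The plan is to reduce both statements to the formula (\ref{cohomology_shape}) of \cref{Coho_constantcoeff}: cohomology with constant coefficients in an $\infty$-topos depends only on the shape. For $A\in\Ani$ we have
\[
\Hom_{\Sh(T)}(\ast, f^*A)\simeq \Hom_{\ProAni}(\Shape(T),A),\qquad
\Hom_{\CondAni}(T,A^{\mathrm{disc}})\simeq \Hom_{\ProAni}(\Shape^c(T),A).
\]
The first identification is the definition of the pro-left adjoint. The second follows from \cref{shape_prohomotopy_agree}, or equivalently from \cref{relativeshape} applied to the slice $\CondAni_{/T}$. The comparison map of the preceding lemma comes from the geometric morphism $c_T\colon \CondAni_{/T}\to\Sh(T)$. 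Under these identifications it should be precomposition with the induced map $\Shape^c(T)\to\Shape(T)$ of pro-anima. To verify this, I would note that $c_T^*$ commutes with the constant-sheaf functors from $\Ani$, because pullbacks of the unique geometric morphisms to $\Ani$ compose. Applying $\pi_0$ then shows that the comparison map is a bijection whenever the relevant map of (pro-truncated) shapes is an equivalence.

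For (1), $T$ is paracompact compactly generated, so \cref{paracomshape} gives $\Shape^c(T)\simeq\Shape(T)$. The main technical point is to check that this equivalence agrees with the one induced by $c_T$, rather than being just some abstract equivalence. I would argue as follows. Both sides are computed as the colimit over $K_i\to T$ with $K_i\in\Extr$. On each $K_i$, the comparison morphism $c_{K_i}$ has fully faithful pullback (Haine), so it induces the identification $\Shape(K_i)\simeq\Shape^c(K_i)$. By naturality of $c$ in $T$, the map $\colim_i\Shape(K_i)\to\Shape(T)$ used in the proof of \cref{paracomshape} is then compatible with $c_T$. Once this is established, the bijection holds for all $A\in\Ani$.

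For (2), $T$ is locally contractible. \cref{Locally_contractible_cshape} gives $\Shape^c(T)\simeq\Sing(T)$, and \cref{locallycontrshape} gives $\Shape(\Shhyp(T))\simeq\Sing(T)$. In fact $c_T^*$ factors through the hypercompletion $\Shhyp(T)$, since $\CondAni_{/T}$ is hypercomplete. The example after \cref{relativeshape} then shows that the induced map $\Shape(\Shhyp(T))\to\Shape^c(T)$ is an equivalence, because it is compatible with the hypercovering by contractible opens. It remains to pass from $\Shhyp(T)$ to $\Sh(T)$. By \cref{protruncation_shapeequiv}, $\Shape(T)\to\Shape(\Shhyp(T))$ becomes an equivalence after applying $\Pro(\tau_{<\infty})$. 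For truncated $A\in\Ani_{<\infty}$, maps $\Hom_{\ProAni}(P,A)$ depend only on the pro-truncation of $P$. Hence $\Hom(\Shape(T),A)\simeq\Hom(\Shape(\Shhyp(T)),A)\simeq\Hom(\Shape^c(T),A)$, and applying $\pi_0$ gives the claim. This is exactly why (2) only covers truncated coefficients.

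I expect the main obstacle to be the naturality point: showing that the abstract shape equivalences coincide with the map induced by $c_T$. The rest is formal once (\ref{cohomology_shape}) is in place.
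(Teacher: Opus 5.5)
Your proposal is correct and follows the paper's proof: you reduce via (\ref{cohomology_shape}) to a comparison of shapes, get (1) from \cref{paracomshape}, and get (2) from \cref{Locally_contractible_cshape}, \cref{locallycontrshape} and \cref{protruncation_shapeequiv}, using that maps into truncated anima only see the pro-truncation. You also check that these equivalences are the ones induced by $c_T$, via naturality of $c$ and Haine's full faithfulness of $c_K^*$ for $K\in\Extr$; the paper passes over this point silently, and your argument for it is sound.
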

\begin{proof}
As the cohomologies only depend on the shapes $\Shape(T)$ and $\Shape^c(T)$, part (1) is a direct consequence of \cref{paracomshape}. For part (2), recall that by \cref{Locally_contractible_cshape} and \cref{locallycontrshape}, the shape $\Shape^c(T)$ of a locally contractible space $T$ coincides with the shape of the $\infty$-topos $\Shhyp(T)$ of hypersheaves. By \cref{protruncation_shapeequiv}, the shape $\Shape^c(T)$ is isomorphic to the shape $\Shape(T)$ only up to pro-truncation. Thus, we need to pass to truncated coefficients. Formally, this follows from
$$\Hom_{\Pro(\Ani)}(\Shape(T),A)\simeq \Hom_{\Pro(\Ani_{<\infty})}(\Pi_{<\infty}(T),A)\simeq\Hom_{\ProAni}(\Pi_{\infty}(\Shhyp(T)),A)\comma$$
which holds for all $A\in \Ani_{<\infty}\subset \Pro(\Ani_{<\infty})$ using that the pro-truncation $\tau_{<\infty}\colon \ProAni \rightarrow \Pro(\Ani_{<\infty})$ is left adjoint to the inclusion of pro-truncated anima $\Pro(\Ani_{<\infty})\hookrightarrow \ProAni$.
\end{proof}
Note that we do not consider as general coefficients as Haine does in \cite[Corollary 4.12]{arXiv:2210.00186} in the case of locally compact Hausdorff spaces. But in return, we can extend (\ref{cohomocomp}) to a larger class of topological spaces. Indeed, our result generalizes the result from Clausen-Scholze. 
\begin{corollary}\label[corollary]{cor:coho_cech_cond}
If $T$ is a paracompact compactly generated and Hausdorff space (e.g., if $T$ is compact Hausdorff) and $A\in \Ab\Grp$ an abelian group, then we have natural isomorphisms 
$$H^n_{\text{\v{C}}\mathrm{ech}}(T,A)  \xrightarrow{\simeq} H^n_{\mathrm{sheaf}}(T,A) \xrightarrow{\simeq} H^n_{\mathrm{cond}}(T,A)$$ in $\Grp$.
If $T$ is locally contractible, these cohomology groups further agree with $H^n_{\mathrm{sing}}(T,A)$.
\end{corollary}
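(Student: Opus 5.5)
The proof chains three comparisons, each supplied by an earlier result. First, the map $H^n_{\text{\v{C}}\mathrm{ech}}(T,A)\to H^n_{\mathrm{sheaf}}(T,A)$ is an isomorphism for paracompact Hausdorff $T$ by Godement's theorem, recalled in \cref{rem:cohomol_agree_locally_contractible}. This is where the Hausdorff hypothesis enters; compact Hausdorff spaces are paracompact and compactly generated by \cref{rem:comp_para}.

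Second, I treat $H^n_{\mathrm{sheaf}}(T,A)\to H^n_{\mathrm{cond}}(T,A)$. I apply \cref{prop:coho_cond_sheaf}(1) with the constant coefficient anima $B^nA\in\Ani$. It gives a natural bijection of sets
$$\pi_0\Hom_{\Sh(T)}(\ast,f^*B^nA)\simeq\pi_0\Hom_{\CondAni_{/T}}(\ast,B^nA).$$
I must check that the two sides are the sheaf cohomology group and the condensed cohomology group of \cref{def:cond_cohomology}, with the condensed abelian group taken to be $A^{\mathrm{disc}}$. This holds because the discrete functor $(\cdot)^{\mathrm{disc}}$ is a left exact pullback, so it commutes with deloopings: $(B^nA)^{\mathrm{disc}}\simeq B^n(A^{\mathrm{disc}})$.

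The bijection is a group isomorphism. It comes from a map of pro-anima $\Shape(T)\to\Shape^c(T)$, namely the comparison morphism $c_T$ together with \cref{relativeshape} and \cref{paracomshape}. Precomposition with such a map, $\Hom_{\ProAni}(\Shape^c(T),B^nA)\to\Hom_{\ProAni}(\Shape(T),B^nA)$, is compatible with the $E_\infty$-group structure on $B^nA$. Naturality in $A$ follows from naturality of the comparison lemma. The main point to verify is that the equivalence of \cref{paracomshape} is the one induced by $c_T$, rather than some abstract equivalence. I would check this by observing that both maps are natural in $T$ and agree on each $K\in\Extr$, where $c_K^*$ is fully faithful. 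Both sides are also colimit-compatible along $T=\colim K_i$, as in the proof of \cref{paracomshape}. Alternatively, only the existence of an isomorphism might be needed, with naturality checked via the shape description.

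Third, suppose $T$ is locally contractible. Then $B^nA\in\Ani_{<\infty}$, so \cref{prop:coho_cond_sheaf}(2) identifies sheaf and condensed cohomology, by the same group-structure argument as above. The cited result of Clausen in \cref{rem:cohomol_agree_locally_contractible} identifies sheaf cohomology with singular cohomology.

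There is a more direct route that avoids sheaf cohomology. By \cref{Locally_contractible_cshape} and (\ref{cohomology_shape}),
$$H^n_{\mathrm{cond}}(T,A)=\pi_0\Hom_{\Ani}(\Sing(T),B^nA)=H^n_{\mathrm{sing}}(T,A).$$
This gives the identification with singular cohomology whenever $T$ is locally contractible. If $T$ is moreover paracompact Hausdorff, it is compatible with the Čech identification through the first two steps.
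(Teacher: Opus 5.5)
Your proposal is correct and follows the paper's proof. You use Godement's theorem (\cref{rem:cohomol_agree_locally_contractible}) for \v{C}ech versus sheaf cohomology, \cref{prop:coho_cond_sheaf} applied to $B^nA$ for sheaf versus condensed cohomology, and the sheaf--singular comparison in the locally contractible case, and your extra checks (the discrete functor commuting with deloopings, compatibility with the group structures, and that the equivalence of \cref{paracomshape} is the one induced by $c_T$) spell out points the paper leaves implicit, while your alternative route via \cref{Locally_contractible_cshape}, $H^n_{\mathrm{cond}}(T,A)\simeq\pi_0\Hom_{\Ani}(\mathrm{Sing}(T),B^nA)$, is also valid and avoids Clausen's comparison theorem.
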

\begin{proof}
The first isomorphism is known for paracompact Hausdorff spaces, \cref{rem:cohomol_agree_locally_contractible}, and the second for paracompact compactly generated spaces, see \cref{prop:coho_cond_sheaf}. Combining these, yields the first claim. The second follows from the fact that the singular and sheaf cohomology coincide for locally contractible spaces and abelian coefficients $A\in \Ab\Grp$, see \cref{rem:cohomol_agree_locally_contractible}.
\end{proof}
\section{Digression: Underlying topological groups}\label[subsection]{Condensed_Homotopy_Groups}
Every condensed group has an \textit{underlying topological group} via a functor
$$(\cdot)^{\TopGrp}\colon \CondGrp\rightarrow \TopGrp\comma $$ just as every condensed set has an underlying topological space. Subsequently, we will specify this functor.
Note that this section is independent of the main part but can, however, help us better understand condensed homotopy groups of condensed anima. Therefore, keep the following picture on the homotopy-theoretic directions of condensed anima in mind.
\begin{equation*}
\begin{tikzcd}[row sep=large]
&\CondAni \arrow[rightarrow, swap, dl, "\text{condensed homotopy groups}"]\arrow[rightarrow, dr, "\text{shape functor}"]& \\
\CondGrp \arrow[d,swap, "\text{underlying topological group}"]& & \ProAni \\
\TopGrp & &
\end{tikzcd}
\end{equation*}
\begin{recollection}
Condensed groups arise as group objects in $\CondSet$. Restriction of the functor $\Top \rightarrow \CondSet, \ T \mapsto \underline{T}$ in \cref{ex:top} to group objects induces a functor 
\begin{align}\label{functor_topcondgrp}
\underline{(\cdot)}\colon\TopGrp \rightarrow \CondGrp.
\end{align} 
It admits a left adjoint $(\cdot)^{\Top} \colon \CondSet \rightarrow \Top, \ X \mapsto X(*)^{\Top}$ sending any condensed set $X$ to its underlying set $X(\ast)$ equipped with the quotient topology of the disjoint union over all profinite sets mapping to $X$, see \cite[Proposition 1.7]{Scholze:condensednotes}. As this functor does not preserve finite products, it does not restrict to a left adjoint of (\ref{functor_topcondgrp}). Nevertheless, by an abstract adjoint functor theorem, compare \cite[Remark 1.8]{Scholze:condensednotes}, it is still known that there exists a left adjoint functor of (\ref{functor_topcondgrp}), the \textit{underlying topological group functor} $$(\cdot)^{\TopGrp}\colon \CondGrp\rightarrow \TopGrp\period $$    
\end{recollection}
So far, the underlying topological group functor was not known in precise terms.  
We make use of the notion of quasi-topological groups to give a more concrete description.
\begin{definition}{\textbf{Quasi-topological groups}}\\
A \textit{quasi-topological group} is a group $G\in \Grp$ with a topology such that 
\begin{itemize}
    \item[(i)] the inversion $G\rightarrow G, \ g\mapsto g^{-1}$, is continuous and
    \item[(ii)] the group operation $G\times G \rightarrow G$ is continuous in each variable.
\end{itemize}
The category of quasi-topological groups with continuous group homomorphisms is $\qTopGrp$.
 \end{definition}
Note that the second condition in the definition is equivalent to the translations $g\mapsto gh$ and $h\mapsto hg$ being homeomorphisms for every $h\in G$. 
\begin{remark}\label[remark]{locallycompacttopo}
Every topological group is a quasi-topological group and there exists a fully faithful inclusion functor 
$$\TopGrp \hookrightarrow \qTopGrp\period $$
A locally compact Hausdorff quasi-topological group is a topological group~\cite[Theorem~2]{Ellis1957}.
\end{remark}
\begin{remark}
There are interesting examples of topologized groups which are not topological but quasi-topological. For instance, this phenomenon occurs when equipping fundamental groups of topological spaces in a canonical way with the quotient topology of the loop space, as for example studied by Brazas in 
\cite{Brazas2011Freeandtop}, \cite{Brazas2013topofundgr}, \cite{Brazas2015fundquot}. It is an open question to us whether it is possible to recover these from a construction internal to condensed mathematics.
\end{remark} 
\begin{proposition}{\cite[Lemma 3.2., Proposition 3.3., Corollary 3.9]{Brazas2013topofundgr}}
The fully faithful inclusion $\TopGrp \hookrightarrow \qTopGrp$ has a left adjoint $\tau \colon \qTopGrp \rightarrow \TopGrp$, such that for $G\in \qTopGrp$ the topological group $\tau(G)$ has the same underlying group structure but a coarser topology such that 
\begin{itemize}
    \item[(i)] $G$ and $\tau(G)$ share the same lattice of open subgroups,
    \item[(ii)] for every topological group $G\in \qTopGrp$, we find $G=\tau(G)\in \TopGrp$.
    \end{itemize}
\end{proposition}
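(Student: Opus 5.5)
The plan is to construct $\tau(G)$ directly, as the group $G$ carrying the finest \emph{group} topology that is coarser than its given quasi-topology, and then to check the universal property together with (i) and (ii).

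\textbf{Construction.} Fix $G\in\qTopGrp$ with topology $\mathcal{T}$. Let $\Sigma$ be the set of all topologies $\mathcal{S}\subseteq\mathcal{T}$ making $G$ a topological group.
\begin{itemize}
\item $\Sigma$ is nonempty, since it contains the indiscrete topology.
\item $\Sigma$ is closed under arbitrary suprema. For a family $(\mathcal{S}_j)_j$ in $\Sigma$, the supremum $\bigvee_j\mathcal{S}_j$ is the initial topology for the diagonal homomorphism $G\to\prod_j(G,\mathcal{S}_j)$. A product of topological groups is a topological group, and the initial topology along a homomorphism into a topological group is again a group topology. Moreover $\bigvee_j\mathcal{S}_j\subseteq\mathcal{T}$, because each $\mathcal{S}_j\subseteq\mathcal{T}$.
\end{itemize}
Define $\tau(G)$ to be $G$ with the topology $\mathcal{T}^\tau:=\bigvee\Sigma$. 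It is the largest element of $\Sigma$, so it is coarser than $\mathcal{T}$, and the identity $G\to\tau(G)$ is a continuous homomorphism.

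\textbf{Universal property.} Let $f\colon G\to H$ be a continuous homomorphism into a topological group $H$.
\begin{itemize}
\item The initial topology $f^{-1}(\mathcal{T}_H)$ is a group topology on $G$, being pulled back along a homomorphism.
\item It is contained in $\mathcal{T}$, because $f$ is continuous.
\item Hence $f^{-1}(\mathcal{T}_H)\in\Sigma$, so $f^{-1}(\mathcal{T}_H)\subseteq\mathcal{T}^\tau$, that is, $f\colon\tau(G)\to H$ is continuous.
\end{itemize}
Thus every continuous homomorphism from $G$ to a topological group factors uniquely through $G\to\tau(G)$; the factorization is unique because the underlying map is unchanged. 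Functoriality of $\tau$ and the adjunction $\tau\dashv(\TopGrp\hookrightarrow\qTopGrp)$ follow formally.

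\textbf{Property (ii).} If $G$ is already a topological group, then $\mathcal{T}$ itself lies in $\Sigma$ and is its maximum. Hence $\tau(G)=G$.

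\textbf{Property (i).} This is the only step where the quasi-topological axioms are genuinely used, and I expect it to be the main obstacle.
\begin{itemize}
\item Every subgroup open in $\tau(G)$ is open in $G$, since $\mathcal{T}^\tau\subseteq\mathcal{T}$.
\item Conversely, let $U\subseteq G$ be an open subgroup. The idea is to produce a group topology in $\Sigma$ in which $U$ is open.
\begin{itemize}
\item Let $G$ act on the set $G/U$ by left translation. This gives a homomorphism $\rho\colon G\to\mathrm{Sym}(G/U)$.
\item Equip $\mathrm{Sym}(G/U)$ with the topology of pointwise convergence on the discrete set $G/U$. This makes it a topological group.
\item $\rho$ is continuous for $\mathcal{T}$. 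It suffices that every stabilizer $gUg^{-1}$ is open in $G$ and that the sets $\{x: xgU=hU\}=hUg^{-1}$ are open. Both hold because translations, and hence conjugations, are homeomorphisms of a quasi-topological group.
\end{itemize}
\item Therefore $\rho^{-1}(\text{topology})\in\Sigma$.
\item In this pulled-back topology, $U$, which is the stabilizer of the coset $U$, is open. Hence $U$ is open in $\tau(G)$.
\end{itemize}
This shows that $G$ and $\tau(G)$ have the same lattice of open subgroups.
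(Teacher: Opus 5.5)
Your proof is correct and complete. The join of all group topologies coarser than the given one is again such a topology and gives the left adjoint. Pulling back the pointwise-convergence topology of $\mathrm{Sym}(G/U)$ produces a group topology coarser than that of $G$, and its subbasic preimages $hUg^{-1}$ are open exactly because translations in a quasi-topological group are homeomorphisms. In that topology $U$ is open, so every open subgroup of $G$ remains open in $\tau(G)$.

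The paper gives no proof of this proposition and only cites Brazas. Your argument is the standard one: it describes $\tau(G)$ as the finest group topology on $G$ coarser than its original topology, and it handles open subgroups with a coset (permutation-action) group topology. So it supplies a self-contained proof of the cited result.
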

Involving this left adjoint, we can describe the underlying topological group functor.
\begin{proposition}\label[proposition]{prop:underlying_top}
Restriction of the functor $$(\cdot)^{\Top}\colon\CondSet \rightarrow \Top, \quad X \mapsto X(\ast)^{\Top}$$ to group objects in $\CondSet$ lands in the category of quasi-topological groups, i.e., it restricts to 
$$\CondGrp \rightarrow \qTopGrp\period $$
Then the left adjoint functor $(\cdot)^{\TopGrp}\colon \CondGrp\rightarrow \TopGrp$ is given as the composition 
\begin{align}\label{eq:compos}
(\cdot)^{\tau}\colon\CondGrp\rightarrow \mathrm{q}\TopGrp\xrightarrow{\tau} \TopGrp\period
\end{align}
\end{proposition}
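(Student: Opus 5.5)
The proof has two parts: first show that $X(\ast)^{\Top}$ is a quasi-topological group for every $G\in\CondGrp$, then verify the adjunction by composing adjunctions.

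\emph{Step 1: $G(\ast)^{\Top}$ is a quasi-topological group.} Let $G\in \CondGrp$ have multiplication $m\colon G\times G\to G$ and inversion $i\colon G\to G$, both maps of condensed sets. Applying the functor $(\cdot)^{\Top}$ to $i$ gives a continuous map $G(\ast)^{\Top}\to G(\ast)^{\Top}$, which on underlying sets is inversion; this settles continuity of inversion. For translations, fix $h\in G(\ast)$. It corresponds to a map $h\colon \ast\to G$ of condensed sets, and left translation is the composite of condensed sets
\[
G\simeq \ast\times G\xrightarrow{h\times \mathrm{id}} G\times G\xrightarrow{m} G .
\]
This is a morphism $G\to G$ in $\CondSet$, so applying $(\cdot)^{\Top}$ yields a continuous map whose underlying map is $g\mapsto hg$. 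Right translation is handled in the same way. Note that the argument never uses that $(\cdot)^{\Top}$ preserves products: each translation is already a morphism $G\to G$ before we apply the functor. Group homomorphisms of condensed groups are sent to continuous homomorphisms, so we obtain a functor $\CondGrp\to\qTopGrp$.

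\emph{Step 2: $\tau\circ(\cdot)^{\Top}$ is left adjoint to $\underline{(\cdot)}$.} For $G\in\CondGrp$ and $H\in\TopGrp$ we produce a natural bijection
\[
\Hom_{\TopGrp}(\tau(G(\ast)^{\Top}),H)\simeq \Hom_{\qTopGrp}(G(\ast)^{\Top},H)\simeq \Hom_{\CondGrp}(G,\underline{H}).
\]
The first bijection is the adjunction of $\tau$ with the inclusion $\TopGrp\hookrightarrow\qTopGrp$. For the second, we use the adjunction $(\cdot)^{\Top}\dashv\underline{(\cdot)}$ on sets: it identifies $\Hom_{\Top}(G(\ast)^{\Top},H)$ with $\Hom_{\CondSet}(G,\underline{H})$, where a continuous map $f$ corresponds to the family $f\circ(\cdot)$ on $S$-points $G(S)\to \underline{H}(S)=\Hom_{\Top}(S,H)$. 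It remains to check that the group homomorphisms on each side correspond.

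Suppose $\phi\colon G\to\underline{H}$ is a map of condensed groups. Evaluating at $\ast$ shows its adjoint $G(\ast)\to H$ is a group homomorphism. Conversely, suppose $f\colon G(\ast)^{\Top}\to H$ is a continuous homomorphism. We must show that for each $S\in\Extr$ the map $G(S)\to\Hom_{\Top}(S,H)$ is a homomorphism. The group structure on $\Hom_{\Top}(S,H)$ is pointwise, so it suffices to compare values after evaluating at each point $s\in S$. Evaluation at $s$ is the map $s^*\colon G(S)\to G(\ast)$, which is a homomorphism because $G$ is a sheaf of groups. Naturality of the adjunction gives that the composite $G(S)\to\Hom(S,H)\to H$ (evaluation at $s$) equals $f\circ s^*$, which is a homomorphism. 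Hence $G(S)\to\Hom_{\Top}(S,H)$ is a homomorphism for every $S$.

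This sets up the second bijection, and naturality in both $G$ and $H$ is inherited from the two underlying adjunctions. By uniqueness of adjoints, $(\cdot)^{\TopGrp}\simeq\tau\circ(\cdot)^{\Top}$.

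The main obstacle is the point-evaluation step in Step 2, namely verifying that homomorphisms are preserved at every level $S$ rather than only at $\ast$. It goes through because the points of $S$ jointly detect equality of maps $S\to H$, so pointwise checking suffices. Step 1 is routine once one observes that products are never used.
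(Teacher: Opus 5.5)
Your proposal is correct and follows essentially the same route as the paper. Inversion and the translations are endomorphisms $G\to G$ of condensed sets and therefore become continuous after applying $(\cdot)^{\Top}$. The adjunction comes from composing $\tau\dashv(\TopGrp\hookrightarrow\qTopGrp)$ with the restriction of $(\cdot)^{\Top}\dashv\underline{(\cdot)}$ to group objects, and your pointwise check via the evaluations $G(s)\colon G(S)\to G(\ast)$ is exactly the paper's argument through the unit $g\mapsto (k\mapsto G(k)(g))$.
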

\begin{proof}
First (\textbf{1.}), we prove that restriction of the functor $$(\cdot)^{\Top}\colon\CondSet \rightarrow \Top, \quad X \mapsto X(\ast)^{\Top}$$ to group objects in $\CondSet$ lands in the category $\qTopGrp$.
Second (\textbf{2.}), we show that the described composition (\ref{eq:compos}) takes over the role as the left adjoint functor $(\cdot)^{\TopGrp}\colon \CondGrp\rightarrow \TopGrp$, i.e., for all condensed groups $G\in \CondGrp$ and topological groups $F\in \TopGrp$ there is a bijection of hom-sets natural in $G$ and $F$
\begin{align}\label{natiso1}
\Hom_{\CondGrp}(G, \underline{F})\simeq\Hom_{\TopGrp}(G^{\tau}, F)\period 
\end{align}
\textbf{(1.)} We need to show that for every condensed group $G$ the underlying group $G(\ast)$ equipped with the quotient topology carries the structure of a quasi-topological group and that the induced morphisms between the underlying groups of two condensed groups are continuous group homomorphisms. 
As $G$ is a condensed group, every set $G(K)$ carries a group structure and we have morphisms $G \rightarrow G$ of condensed sets representing the inverse and translation maps. 
More precisely, there is a morphism $(\cdot)^{-1}\colon G\rightarrow G$ of condensed sets such that for all $K$ the map $G(K) \rightarrow G(K)$ sends a group element $g_K \in G(K)$ to its inverse $(g_K)^{-1} \in G(K)$. 
Moreover, for all $h\in G(\ast)$ we have morphisms $(h\cdot)\colon G\rightarrow G$ and $(\cdot h)\colon G\rightarrow G$ sending an element $g_K \in G(K)$ to the translation $h_K\cdot g_K$ or $g_K\cdot h_K$, respectively. Here, $h_K$ denotes the image of $h \in G(\ast)$ under the map $G(\ast) \rightarrow G(K)$ induced by the unique morphism $K\rightarrow \ast$.
These morphisms are sent to continuous morphisms $G(\ast)^{\Top} \rightarrow G(\ast)^{\Top}$ on the topological space $G(\ast)^{\Top}$, whose underlying set carries the structure of a group.
Hence, $G(\ast)^{\Top}$ is a topologized group with continuous inverse $g\mapsto g^{-1}$ as well as continuous translations $g\mapsto hg$ and $g\mapsto gh$ for all $h\in G(\ast)$. By definition, it thus is a quasi-topological group.
If $G\rightarrow H$ is a morphism of condensed groups, the map $G(\ast)^{\Top} \rightarrow H(\ast)^{\Top}$ is a morphism of quasi-topological groups if it is a continuous group homomorphism.
The map is clearly continuous as it lives in the category $\Top$. It is also a group homomorphism by the definition of morphisms of condensed groups as level-wise group homomorphisms (satisfying certain compatibility conditions).\\
\textbf{(2.)} We already know about isomorphisms  natural in $C\in \CondSet$ and $T\in \Top$
\begin{align}\label{natiso2}
\Hom_{\CondSet}(C, \underline{X})\simeq \Hom_{\Top}(C(\ast)^{\Top}, X),\end{align} as well as natural in $Q\in \qTopGrp$ and $F\in \TopGrp$
\begin{align}\label{natiso3}
\Hom_{\qTopGrp}(Q,F)\simeq \Hom_{\TopGrp}(\tau(Q),F).
\end{align} If we are able to show that the isomorphism (\ref{natiso2}) restricts on group objects in $\CondSet$ and $\Top$ to an isomorphism 
\begin{align}\label{natiso4}
\Hom_{\CondGrp}(G, \underline{F})\simeq \Hom_{\qTopGrp}(G(\ast)^{\Top}, F),
\end{align} which is then automatically natural in $G\in \CondGrp$ and $F\in \TopGrp$, we can deduce the claim by combining (\ref{natiso4}) and (\ref{natiso3}) to obtain (\ref{natiso1}). Note that one has $G^{\tau}=\tau(G(\ast)^{\Top})$ by definition.
So let $G\rightarrow \underline{F}$ be a morphism of condensed groups, where $F$ is a topological group. In particular, it is a morphism on the level of condensed sets and thus under (\ref{natiso2}) corresponds to a morphism $G(\ast)^{\Top} \rightarrow F$ of topological spaces. We already argued above, that $G(\ast)^{\Top} \rightarrow \underline{F}(\ast)^{\Top}$ is a morphism of quasi-topological groups. This remains through after postcomposition with the continuous counit $\underline{F}(\ast)^{\Top} \rightarrow F$ which is on the underlying groups just the identity and thus clearly a morphism on the level of quasi-topological groups, i.e., a continuous group homomorphism. On the other hand, we have to show that every continuous group homomorphism $G(\ast)^{\Top} \rightarrow F$ corresponds under (\ref{natiso2}) to a morphism $G\rightarrow \underline{F}$ of condensed groups and not just of condensed sets. Indeed, we need to prove that
for all $K\in \Extr$ the morphism $G(K) \rightarrow \underline{F}(K)$ of sets is indeed a homomorphism of groups such that the square
\[
\begin{tikzcd}[row sep=small]
G(K') \ar[r] \ar[d] & G(K)\ar[d] \\
\underline{F}(K) \ar[r] & \underline{F}(K')
\end{tikzcd}
\]
commutes for all $f\colon K \rightarrow K'$ in $\Extr$.
The commutativity condition is already satisfied by $G\rightarrow \underline{F}$ being a morphism of condensed sets.
For the group homomorphism condition, note that the morphism $G(K)\rightarrow \underline{F}(K)$ is given as the composition $G(K)\rightarrow \underline{G(\ast)^{\Top}}(K) \rightarrow \underline{F}(K)$ of the unit of the adjunction with the map induced by the given morphism $G(\ast)^{\Top} \rightarrow F$. 
The set $ \underline{G(\ast)^{\Top}}(K)$ is in general not a group as we do not have a continuous multiplication on the quasi-topological group $G(\ast)^{\Top}$. 
Nevertheless, it formally follows that for every $K$ the composition above is a homomorphism of groups: For an element $g\in G(K)$, its image under the unit $G(K) \rightarrow \underline{G(\ast)^{\Top}}(K)=\Hom_{\Top}(K, G(\ast)^{\Top})$ is given by sending $g$ to the morphism $$K\rightarrow G(\ast)^{\Top}, \ (k=(\ast\xrightarrow{k} K)) \mapsto (G(k)(g))\period $$ 
This image is just the evaluation of the morphism $g\colon h_k \rightarrow G$, corresponding to the element $g\in G(K)$ under the Yoneda identification $G(K)=\Hom(h_k, G)$, at the point $\ast$.
If we consider two elements $g,h \in G(K)$ and their product $g\cdot h$ in $G(K)$, then by the above its image under $G(K) \rightarrow \underline{G(\ast)^{\Top}}(K)=\Hom_{\Top}(K, G(\ast)^{\Top})$ is given  by the morphism $$K\rightarrow G(\ast)^{\Top}, \ (k=(\ast\xrightarrow{k} K)) \mapsto (G(k)(g\cdot h))$$ which is continuous by definition of the topology on $G(\ast)^{\Top}$. As for all $k\colon \ast\rightarrow K$, the induced morphism $G(K)\xrightarrow{G(k)} G(\ast)$ is a group homomorphism, we have for all these $k$
$$(G(k)(g\cdot h))=G(k)(g)\cdot G(k)(h)\period $$ Further, the morphism $\underline{G(\ast)^{\Top}}(K) \rightarrow \underline{F}(K)$ is given by postcomposition with the continuous map $G(\ast)^{\Top} \rightarrow F$ of quasi-topological groups, which indeed preserves the group structure. All in all, the map  $G(K)\rightarrow \underline{G(\ast)^{\Top}}(K) \rightarrow \underline{F}(K)$ is compatible with the group structures on $G(K)$ and $\underline{F}(K)$ and thus a group homomorphism.
\end{proof}
Note that the underlying quasi-topological group will carry a compactly generated topology as this is true for the underlying topological space of every condensed set. Its image under the functor
$\tau\colon \mathrm{q}\TopGrp \rightarrow \TopGrp$
will then, in general, carry a coarser topology.
\begin{corollary}
If the underlying topological space of a condensed group $G$ is locally compact Hausdorff, the image of $G$ under 
$$(\cdot)^{\TopGrp}\colon \CondGrp\rightarrow \TopGrp$$
agrees with $G(\ast)^{\Top}$ inherited with the group structure of $G(\ast)$. 
\end{corollary}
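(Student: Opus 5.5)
By \cref{prop:underlying_top}, the underlying topological group of $G$ is $G^{\tau}=\tau(G(\ast)^{\Top})$. The plan is therefore to show that under the hypothesis the quasi-topological group $G(\ast)^{\Top}$ is already a topological group. Once that is done, $\tau$ acts as the identity on it, and the image of $G$ under $(\cdot)^{\TopGrp}$ is exactly $G(\ast)^{\Top}$ with the group structure of $G(\ast)$.

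\textbf{Step 1.} Apply part (1.) of the proof of \cref{prop:underlying_top}. It shows that $G(\ast)^{\Top}$, i.e. the set $G(\ast)$ with the quotient topology coming from all profinite sets mapping to $G$, is a quasi-topological group. Concretely, inversion is continuous and all left and right translations are continuous. Both operations are images under $(\cdot)^{\Top}$ of morphisms of condensed sets $G\to G$.

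\textbf{Step 2.} By assumption the underlying space $G(\ast)^{\Top}$ is locally compact Hausdorff. We then invoke Ellis' theorem as recorded in \cref{locallycompacttopo}: a locally compact Hausdorff quasi-topological group is a topological group. Multiplication is then jointly continuous, and $G(\ast)^{\Top}$ lies in the essential image of the fully faithful inclusion $\TopGrp\hookrightarrow\qTopGrp$.

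\textbf{Step 3.} Property (ii) of $\tau$ in the cited proposition of Brazas says that $\tau(Q)=Q$ whenever $Q$ is a topological group. Equivalently, the unit $Q\to\tau(Q)$ is an isomorphism for $Q$ in the image of a fully faithful right adjoint. Hence $G^{\tau}=\tau(G(\ast)^{\Top})=G(\ast)^{\Top}$, and by \cref{prop:underlying_top} this equals $G^{\TopGrp}$.

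The only step with real content is Step 2. It is handled entirely by the cited result of Ellis, so no step should present an obstacle. The one point needing care is that the hypothesis must concern the quotient topology $G(\ast)^{\Top}$ itself, not some other topology on $G(\ast)$; this is exactly what the statement assumes.
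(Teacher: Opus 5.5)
Your proposal is correct and follows the paper's proof: the paper likewise uses \cref{prop:underlying_top} to write $G^{\TopGrp}=\tau(G(\ast)^{\Top})$ and Ellis' theorem from \cref{locallycompacttopo} to conclude that $G(\ast)^{\Top}$ is already a topological group, so $\tau$ leaves it unchanged. Your Steps 1 and 3 simply spell out what the paper's one-line proof leaves implicit, namely the quasi-topological structure on $G(\ast)^{\Top}$ and the fact that $\TopGrp\subset\qTopGrp$ is reflective.
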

\begin{proof}
By \cref{locallycompacttopo}, every locally compact Hausdorff quasi-topological group is already a topological group and thus $G^{\TopGrp}=\tau(G(\ast)^{\Top})=G(\ast)^{\Top}$.
\end{proof}


\DeclareFieldFormat{labelalphawidth}{#1}
\DeclareFieldFormat{shorthandwidth}{#1}
\printbibliography[heading=references]


\bigskip

\noindent \textsc{Catrin Mair, Universität Münster, Einsteinstrasse 62, 48149 Münster, Germany}
\medskip

\end{document}